\numberwithin{equation}{section}
\newcommand{\inacc}{{\rm inacc}}
\newcommand{\GCH}{{\rm GCH}}
\newcommand{\ZFC}{{\rm ZFC}}
\renewcommand{\emptyset}{\varnothing}
\newcommand{\bb}{\mathbb}
\renewcommand{\P}{{\mathbb P}}
\newcommand{\Q}{{\mathbb Q}}
\newcommand{\R}{{\mathbb R}}
\renewcommand{\S}{\mathbb{S}}
\newcommand{\F}{{\mathbb F}}
\newcommand{\T}{{\mathbb T}}
\newcommand{\forces}{\Vdash}
\newcommand{\restrict}{\upharpoonright}
\newcommand{\concat}{\mathbin{{}^\smallfrown}}
\newcommand{\la}{\langle}
\newcommand{\ra}{\rangle}
\newcommand{\<}{\langle}
\renewcommand{\>}{\rangle}
\newcommand{\st}{:}
\newcommand{\defn}{\mathop{\rm def}}
\newcommand{\dom}{\mathop{\rm dom}}
\newcommand{\cf}{\mathop{\rm cf}}
\newcommand{\cof}{\mathop{\rm cof}}
\newcommand{\crit}{\mathop{\rm crit}}
\newcommand{\Tr}{{\mathop{\rm Tr}}}
\newcommand{\Refl}{{\mathop{\rm Refl}}}
\newcommand{\Levy}{L\'{e}vy}
\renewcommand{\and}{\mathop{\&}}
\newcommand{\lt}{{\smalllt}}
\newcommand{\smalllt}{\mathrel{\mathchoice{\raise2pt\hbox{$\scriptstyle<$}}{\raise1pt\hbox{$\scriptstyle<$}}{\raise0pt\hbox{$\scriptscriptstyle<$}}{\scriptscriptstyle<}}}
\newcommand{\smallleq}{\mathrel{\mathchoice{\raise2pt\hbox{$\scriptstyle\leq$}}{\raise1pt\hbox{$\scriptstyle\leq$}}{\raise1pt\hbox{$\scriptscriptstyle\leq$}}{\scriptscriptstyle\leq}}}
\newcommand{\lesseq}{{\smallleq}}
\newcommand{\one}{\mathbbm{1}} 
\newcommand{\image}{\mathbin{\hbox{\tt\char'42}}}
\newcommand{\from}{\mathbin{\vbox{\baselineskip=2pt\lineskiplimit=0pt
                         \hbox{.}\hbox{.}\hbox{.}}}}
\newtheorem{theorem}{Theorem}[section]
\newtheorem{lemma}[theorem]{Lemma}
\newtheorem{corollary}[theorem]{Corollary}
\newtheorem{proposition}[theorem]{Proposition}
\newtheorem{claim}[theorem]{Claim}
\newtheorem*{theorem_1_square_wc}{Theorem \ref{theorem_1_square_wc}}
\newtheorem*{theorem_1_square_and_reflection}{Theorem \ref{theorem_1_square_and_reflection}}
\theoremstyle{definition}
\newtheorem{question}[theorem]{Question}
\newtheorem{remark}[theorem]{Remark}
\newtheorem{definition}[theorem]{Definition}
\subjclass[2000]{03E35, 03E55}
\date{\today}
\begin{document}


\title{Forcing a $\square(\kappa)$-like principle to hold at a weakly compact cardinal}

\author[Brent Cody]{Brent Cody}
\address[Brent Cody]{
Virginia Commonwealth University,
Department of Mathematics and Applied Mathematics,
1015 Floyd Avenue, PO Box 842014, Richmond, Virginia 23284, United States
}
\email[B. ~Cody]{bmcody@vcu.edu}
\urladdr{http://www.people.vcu.edu/~bmcody/}

\author[Victoria Gitman]{Victoria Gitman}
\address[Victoria Gitman]{
The City University of New York, CUNY Graduate Center, Mathematics Program, 365 Fifth Avenue, New York, NY 10016, USA
}
\email[V. ~Gitman]{vgitman@nylogic.org}
\urladdr{https://victoriagitman.github.io/}

\author[Chris Lambie-Hanson]{Chris Lambie-Hanson}
\address[Chris Lambie-Hanson]{
Virginia Commonwealth University,
Department of Mathematics and Applied Mathematics,
1015 Floyd Avenue, PO Box 842014, Richmond, Virginia 23284, United States
}
\email[C. ~Lambie-Hanson]{cblambiehanso@vcu.edu}
\urladdr{http://people.vcu.edu/~cblambiehanso}

\begin{abstract}
Hellsten \cite{MR2026390} proved that when $\kappa$ is $\Pi^1_n$-indescribable, the \emph{$n$-club} subsets of $\kappa$ provide a filter base for the $\Pi^1_n$-indescribability ideal, and hence can also be used to give a characterization of $\Pi^1_n$-indescribable sets which resembles the definition of stationarity: a set $S\subseteq\kappa$ is $\Pi^1_n$-indescribable if and only if $S\cap C\neq\emptyset$ for every $n$-club $C\subseteq\kappa$. By replacing clubs with $n$-clubs in the definition of $\Box(\kappa)$, one obtains a $\Box(\kappa)$-like principle $\Box_n(\kappa)$, a version of which was first considered by Brickhill and Welch \cite{BrickhillWelch}. The principle $\Box_n(\kappa)$ is consistent with the $\Pi^1_n$-indescribability of $\kappa$ but inconsistent with the $\Pi^1_{n+1}$-indescribability of $\kappa$. By generalizing the standard forcing to add a $\Box(\kappa)$-sequence, we show that if $\kappa$ is $\kappa^+$-weakly compact and $\GCH$ holds then there is a cofinality-preserving forcing extension in which $\kappa$ remains $\kappa^+$-weakly compact and $\Box_1(\kappa)$ holds. If $\kappa$ is $\Pi^1_2$-indescribable and $\GCH$ holds then there is a cofinality-preserving forcing extension in which $\kappa$ is $\kappa^+$-weakly compact, $\Box_1(\kappa)$ holds and every weakly compact subset of $\kappa$ has a weakly compact proper initial segment. As an application, we prove that, relative to a $\Pi^1_2$-indescribable cardinal, it is consistent that $\kappa$ is $\kappa^+$-weakly compact, every weakly compact subset of $\kappa$ has a weakly compact proper initial segment, and there exist two weakly compact subsets $S^0$ and $S^1$ of $\kappa$ such that there is no $\beta<\kappa$ for which both $S^0\cap\beta$ and $S^1\cap\beta$ are weakly compact.
\end{abstract}

\subjclass[2010]{Primary 03E35; Secondary 03E55}

\keywords{indescribable, weakly compact, square, reflection}

\maketitle




\section{Introduction}
In this paper, we investigate an incompactness principle $\square_1(\kappa)$, which is closely related to $\square(\kappa)$ but is consistent with weak compactness. Let us begin by recalling the basic facts about $\square(\kappa)$.

The principle $\square(\kappa)$ asserts that there is a $\kappa$-length coherent sequence of clubs $\vec{C}=\<C_\alpha\st\alpha\in\lim(\kappa)\>$ that cannot be threaded. For an uncountable cardinal $\kappa$, a sequence $\vec{C}=\<C_\alpha\st\alpha\in\lim(\kappa)\>$ of clubs $C_\alpha\subseteq\alpha$ is called \emph{coherent} if whenever $\beta$ is a limit point of $C_\alpha$ we have $C_\beta=C_\alpha\cap\beta$. Given a coherent sequence $\vec{C}$, we say that $C$ is a \emph{thread} through $\vec{C}$ if $C$ is a club subset of $\kappa$ and $C\cap\alpha=C_\alpha$ for every limit point $\alpha$ of $C$. A coherent sequence $\vec{C}$ is called a $\square(\kappa)$-sequence if it cannot be threaded, and $\square(\kappa)$ holds if there is a $\square(\kappa)$-sequence. It is easy to see that $\square(\kappa)$ implies that $\kappa$ is not weakly compact, and thus $\square(\kappa)$ can be viewed as asserting that $\kappa$ exhibits a certain amount of incompactness. The principle $\square(\kappa)$ was isolated by Todor\v{c}evi\'{c} \cite{MR908147}, building on work of Jensen \cite{MR0309729}, who showed that, if $V = L$, then $\square(\kappa)$ holds for every regular uncountable $\kappa$ that is not weakly compact.

The natural ${\leq}\kappa$-strategically closed forcing to add a $\square(\kappa)$-sequence \cite[Lemma 35]{MR3129734} preserves the inaccessibility as well as the Mahloness of $\kappa$, but kills the weak compactness of $\kappa$ and indeed adds a non-reflecting stationary set. However, if $\kappa$ is weakly compact, there is a forcing \cite{MR3730566} which adds a $\square(\kappa)$-sequence and also preserves the fact that every stationary subset of $\kappa$ reflects. Thus, relative to the existence of a weakly compact cardinal, $\square(\kappa)$ is consistent with $\Refl(\kappa)$, the principle that every stationary set reflects. However, $\square(\kappa)$ implies the failure of the simultaneous stationary reflection principle $\Refl(\kappa,2)$ which states that if $S$ and $T$ are any two stationary subsets of $\kappa$, then there is some $\alpha<\kappa$ with $\cf(\alpha)>\omega$ such that $S\cap \alpha$ and $T\cap\alpha$ are both stationary in $\alpha$. In fact, $\square(\kappa)$ implies that every stationary subset of $\kappa$ can be partitioned into two stationary sets that do not simultaneously reflect \cite[Theorem 2.1]{MR3730566}.

If $\kappa$ is a weakly compact cardinal, then the collection of non--$\Pi^1_1$-indescribable subsets of $\kappa$ forms a natural normal ideal called the $\Pi^1_1$-indescribability ideal:
\[\Pi^1_1(\kappa)=\{X\subseteq\kappa\st \text{$X$ is not $\Pi^1_1$-indescribable}\}.\]
A set $S\subseteq\kappa$ is \emph{$\Pi^1_1$-indescribable} if for every $A\subseteq V_\kappa$ and every $\Pi^1_1$-sentence $\varphi$, whenever $(V_\kappa,\in,A)\models\varphi$ there is an $\alpha\in S$ such that $(V_\alpha,\in,A\cap V_\alpha)\models\varphi$. More generally, a $\Pi^1_n$-indescribable cardinal $\kappa$ carries the analogously defined $\Pi^1_n$-indescribability ideal. It is natural to ask the question: which results concerning the nonstationary ideal can be generalized to the various ideals associated to large cardinals, such as the $\Pi^1_n$-indescribability ideals? It follows from the work of Sun \cite{MR1245524} and Hellsten \cite{MR2026390} that when $\kappa$ is $\Pi^1_n$-indescribable the collection of \emph{$n$-club} subsets of $\kappa$ (see the next section for definitions) is a filter-base for the filter $\Pi^1_n(\kappa)^*$ dual to the $\Pi^1_n$-indescribability ideal, yielding a characterization of $\Pi^1_n$-indescribable sets that resembles the definition of stationarity: when $\kappa$ is $\Pi^1_n$-indescribable, a set $S\subseteq\kappa$ is $\Pi^1_n$-indescribable if and only if $S\cap C\neq\emptyset$ for every $n$-club $C\subseteq\kappa$. Several recent results have used this characterization (\cite{MR2252250}, \cite{MR2653962}, \cite{MR3985624} and \cite{MR4050036}) to generalize theorems concerning the nonstationary ideal to the $\Pi^1_1$-indescribability ideal. For technical reasons discussed below in Section \ref{section_questions}, there has been less success with the $\Pi^1_n$-indescribability ideals for $n>1$. In this article we continue this line of research: by replacing ``clubs'' with ``$1$-clubs'' we obtain a $\square(\kappa)$-like principle $\square_1(\kappa)$ (see Definition~\ref{definition_n_square}) that is consistent with weak compactness but not with $\Pi^1_2$-indescribability. Brickhill and Welch \cite{BrickhillWelch} showed that a slightly different version of $\square_1(\kappa)$, which they call $\square^1(\kappa)$, can hold at a weakly compact cardinal in $L$. See Remark \ref{remark_relationship} for a discussion of the relationship between $\square^1(\kappa)$ and $\square_1(\kappa)$. In this article we consider the extent to which principles such as $\square_1(\kappa)$ can be forced to hold at large cardinals.

We will see that the principle $\square_1(\kappa)$ holds trivially at weakly compact cardinals $\kappa$ below which stationary reflection fails. (This is analogous to the fact that $\square(\kappa)$ holds trivially for every $\kappa$ of cofinality $\omega_1$.) Thus, the task at hand is not just to force $\square_1(\kappa)$ to hold at a weakly compact cardinal, but to show that one can force $\square_1(\kappa)$ to hold at a weakly compact cardinal $\kappa$ even when stationary reflection holds at many cardinals below $\kappa$, so that nontrivial coherence of the sequence is obtained. Recall that when $\kappa$ is $\kappa^+$-weakly compact, the set of weakly compact cardinals below $\kappa$ is weakly compact and much more, so, in particular, the set of inaccessible $\alpha<\kappa$ at which stationary reflection holds is weakly compact. By \cite[Theorem 3.24]{BrickhillWelch}, assuming $V=L$, if $\kappa$ is $\kappa^+$-weakly compact and $\kappa$ is not $\Pi^1_2$-indescribable then $\square_1(\kappa)$ holds. We show that the same can be forced.

\begin{theorem}\label{theorem_1_square_wc}
If $\kappa$ is $\kappa^+$-weakly compact and the $\GCH$ holds, then there is a cofinality-preserving forcing extension in which
\begin{enumerate}
\item $\kappa$ remains $\kappa^+$-weakly compact and
\item $\square_1(\kappa)$ holds.
\end{enumerate}
\end{theorem}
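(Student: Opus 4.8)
The plan is to force with a natural poset $\mathbb{P}$ that adds a $\square_1(\kappa)$-sequence by initial segments, obtained from the classical poset adding a $\square(\kappa)$-sequence (\cite[Lemma~35]{MR3129734}) by demanding that entries at weakly compact levels be $1$-clubs rather than merely clubs. A condition is a $1$-coherent approximation $p=\langle C^p_\alpha\st\alpha\in d^p\rangle$, where $d^p\subseteq\kappa$ is bounded, closed under its own $1$-limit points, and has a maximum, each $C^p_\alpha$ is a $1$-club subset of $\alpha$ in the sense of Definition~\ref{definition_n_square} (an ordinary club when $\alpha$ is not weakly compact), and the defining coherence condition holds; the order is end-extension. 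Note that $d^p$ need not contain every limit ordinal below $\sup d^p$, so the eventual generic sequence will be supported on a $1$-club subset of $\kappa$ rather than on all of $\Lim(\kappa)$. The first tasks are adaptations of the $\square(\kappa)$ theory: (i) for each $p$ and $\gamma<\kappa$ there is $q\le p$ with $\max d^q\ge\gamma$, proved by extending through ``safe'' ordinals---successor ordinals, or singular limits of cofinality $\omega$---where the next entry can be assembled as a club from the already-chosen entries along a cofinal sequence, and by handling, via the trace of the approximation built so far, the occasional limit ordinal that $1$-closure of the domain forces us to include; (ii) $\mathbb{P}$ is ${<}\kappa$-strategically closed, the second player ensuring that at every limit stage $\eta<\kappa$ of the game the supremum of the domains played so far has cofinality $\le|\eta|<\kappa$, so that she can continue. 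With the $\GCH$ giving $|\mathbb{P}|=\kappa^{<\kappa}=\kappa$ and hence the $\kappa^+$-cc, the ${<}\kappa$-strategic closure then yields that $\mathbb{P}$ preserves all cardinals and cofinalities.

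So in $V[G]$ the object $\vec C:=\bigcup G$ is a $1$-coherent sequence with $1$-club domain, and the next task is that $\vec C$ admits no $1$-thread. Here I would mimic the $\square(\kappa)$ argument: from $p\Vdash$ ``$\dot T$ is a $1$-thread through $\dot{\vec C}$'' build a descending $\omega$-chain $p=p_0\ge p_1\ge\cdots$ and $\alpha_0<\alpha_1<\cdots$ with $p_{n+1}$ deciding $\dot T\cap\alpha_{n+1}$, forcing $\alpha_n\in\dot T$, and $\alpha_n<\max d^{p_{n+1}}$; then $\gamma=\sup_n\alpha_n$ has cofinality $\omega$, hence is not weakly compact, so one may extend $\bigcup_n p_n$ by putting $\gamma$ into the domain with some entry $C_\gamma\neq\dot T\cap\gamma$ while still forcing $\gamma$ to be a $1$-limit point of $\dot T$, contradicting threadedness. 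Compared with $\square(\kappa)$ this argument has more room, since a $1$-thread must in addition be $\Pi^1_1$-indescribable; the genuinely new point is that ordinary threads through $\vec C$---in particular its trace---do survive, so that $\square(\kappa)$ itself is \emph{not} forced and weak compactness need not be destroyed.

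The main task is to preserve the $\kappa^+$-weak compactness of $\kappa$. Given $A\subseteq\kappa^+$ in $V[G]$, fix a nice $\mathbb{P}$-name $\dot A$ with $|\dot A|\le\kappa^+$ (using the $\kappa^+$-cc and the $\GCH$) and, since $\kappa$ is $\kappa^+$-weakly compact in $V$, an elementary $j\colon M\to N$ with $\crit(j)=\kappa$, $M,N$ transitive, $|M|=\kappa^+$, ${}^{<\kappa}M\subseteq M$, $M$ computing $\kappa^+$ correctly, and $\dot A,\mathbb{P}\in M$. In $N$, $j(\mathbb{P})$ factors below a condition as $\mathbb{P}*\dot{\mathbb{Q}}$, where $\dot{\mathbb{Q}}$ names the part of the $\square_1(j(\kappa))$-forcing acting on $[\kappa,j(\kappa))$ and cohering with $\vec C$ below $\kappa$. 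In $V[G]$, let $T^*$ be the trace of $\vec C$, i.e.\ the eventual value of $\{\beta<\kappa\st\beta$ is a $1$-limit point of $C^{\vec C}_\alpha\}$ as $\alpha\to\kappa$; then $T^*$ is a club in $\kappa$ cohering with $\vec C$, and---crucially---$N[G]$ accepts $T^*$ as a legitimate $\kappa$-th entry of a condition in $\mathbb{Q}:=\dot{\mathbb{Q}}^G$, so that $q^*:=j[\vec C]\concat\langle(\kappa,T^*)\rangle$ is a condition in $j(\mathbb{P})$. Below $q^*$ one builds $H$ that is $\mathbb{Q}$-generic over $N[G]$ by a recursion of length $\kappa^+$: there are only $\kappa^+$-many dense sets of $N[G]$ to meet (as $|N[G]|=\kappa^+$), and $\mathbb{Q}$ below $q^*$ is ${\le}\kappa$-strategically closed in $V[G]$, since the earlier cofinality analysis shows no weakly compact suprema appear along $(\kappa,j(\kappa))$. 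Since $j[G]=G\subseteq G*H$, the embedding lifts to $j\colon M[G]\to N[G*H]$, and with $A\in M[G]$, $M[G]$ transitive of size $\kappa^+$ and ${}^{<\kappa}$-closed, and $N[G*H]$ transitive, this witnesses that $\kappa$ is again $\kappa^+$-weakly compact.

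The hard part is the claim that $N[G]$ accepts the trace $T^*$ as a valid $\kappa$-th entry, i.e.\ that the tail forcing $\mathbb{Q}$ can be continued past stage $\kappa$ over $N[G]$: in $V[G]$ a $\Pi^1_1$-indescribable entry cohering with $\vec C$ would literally be a $1$-thread through $\vec C$, contradicting $\square_1(\kappa)$. The resolution must be that the witnessing model $N$ is chosen small enough that its internal notion of ``$1$-club at $\kappa$'' (equivalently, its $\Pi^1_1$-indescribability ideal at $\kappa$) is weak enough to be satisfied by $T^*$, while $N$ and $M$ stay large enough---of size $\kappa^+$, closed under ${<}\kappa$-sequences, and containing the prescribed $A\subseteq\kappa^+$---for the lift to witness $\kappa^+$-weak compactness. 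Pinning down this choice, together with the bookkeeping showing that $T^*$ really is an admissible entry over $N[G]$ and the analysis of which limit ordinals the generic filter is forced to place into the domain (which is what makes the coherence of $\vec C$ nontrivial even though stationary reflection holds at many cardinals below $\kappa$), is where essentially all the work of the proof is concentrated; the remaining ingredients are straightforward adaptations of the classical $\square(\kappa)$ arguments.
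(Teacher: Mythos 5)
Your overall architecture (square-type poset ordered by end-extension, strategic closure, lift an embedding via a master condition whose top entry is a ``thread-like'' object that the small model accepts) is the right shape, and you have correctly located the crux: producing a legitimate $\kappa$-th entry without producing an actual thread in $V[G]$. But two of your central steps fail as written.

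First, the non-threading argument. You propose the classical $\square(\kappa)$ density argument: run an $\omega$-chain deciding $\dot T$, land at $\gamma=\sup_n\alpha_n$ of cofinality $\omega$, and seal with a bad entry $C_\gamma\neq \dot T\cap\gamma$. For $\square_1(\kappa)$ this proves nothing: a $1$-thread is only obligated to cohere at \emph{inaccessible} $\alpha$ at which it \emph{reflects} (i.e.\ $T\cap\alpha\in\Pi^1_0(\alpha)^+$), and a point of cofinality $\omega$ is never such an $\alpha$, so the bad entry at $\gamma$ places no constraint on $\dot T$. Your remark that the argument ``has more room'' because a $1$-thread must be $\Pi^1_1$-indescribable is backwards: the extra demands on a thread mean it has \emph{fewer} coherence obligations, so density arguments over $V$ get weaker, not stronger. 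Indeed the paper shows (Lemma~\ref{lemma_thread_forcing}) that the threading poset $\T(\vec C)$ for a coherent sequence of $1$-clubs is $\kappa$-strategically closed --- in sharp contrast to the $\square(\kappa)$ case --- so no such density argument can succeed. Whether $\bigcup G$ is unthreaded is genuinely equivalent to $\Refl_0(\kappa)$ holding in the extension (Theorem~\ref{theorem_reflection_characterization}), and the proof of that equivalence (or, alternatively, the lifting argument of Theorem~\ref{theorem_forcing_1square_at_a_weakly_compact}, which shows any thread in $V[G]$ would have to coincide with a set that is \emph{generic over} $N[G*H]$) is an essentially different argument from the one you sketch.

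Second, the master condition. Your candidate top entry $T^*$, the ``trace of $\vec C$,'' is not a well-defined object with the properties you need: the sets of reflection points of the $C_\alpha$ do not stabilize into a single club, and even granting some version of it, a set in $V[G]$ (or in $N[G]$) that is a $1$-club cohering with $\vec C$ is by definition a thread, which cannot exist if $\square_1(\kappa)$ is to hold. Your proposed escape --- choose $N$ so its internal $\Pi^1_1$-indescribability ideal at $\kappa$ is ``weak enough'' --- restates the difficulty rather than resolving it, and it pulls against your simultaneous requirement that $N$ be $\kappa^+$-sized. The paper's resolution is different and is the key idea of the whole proof: the top entry is obtained as a filter for $\T(\vec C)$ that is \emph{generic over the small model} $N[G*H]$ but built inside $V[G*H]$ using $\kappa$-strategic closure; such a set is a $1$-club threading $\vec C$ from $N$'s limited point of view but is not stationary in $V[G*H]$, so no actual thread is created. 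To make this object available as a condition, the paper replaces your single poset by an Easton-support iteration forcing with $\Q(\gamma)*\dot\T(\vec C(\gamma))$ at each Mahlo $\gamma<\kappa$, so that the threading poset appears as part of stage $\kappa$ of $j(\P_\kappa)$ computed in $N$. Finally, your treatment of $\kappa^+$-weak compactness via a single embedding on a transitive $M$ of size $\kappa^+$ does not match the iterated-trace definition and founders on the diagonalization: with only $\kappa$-strategic closure you cannot meet $\kappa^+$ many dense sets. The paper instead preserves every weakly compact subset of every $\gamma\le\kappa$ and runs an induction along the trace hierarchy $\Tr_1^\alpha([\kappa])$ for $\alpha<\kappa^+$.
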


We will also investigate the relationship between $\square_1(\kappa)$ and weakly compact reflection principles. The \emph{weakly compact reflection principle} $\Refl_1(\kappa)$ states that $\kappa$ is weakly compact and for every weakly compact $S\subseteq \kappa$ there is an $\alpha<\kappa$ such that $S\cap\alpha$ is weakly compact. It is straightforward to see that if $\kappa$ is $\Pi^1_2$-indescribable, then $\Refl_1(\kappa)$ holds, and if $\Refl_1(\kappa)$ holds, then $\kappa$ is $\omega$-weakly compact (see \cite[Section 2]{MR3985624}). However, the following results show that neither of these implications can be reversed. The first author \cite{MR3985624} showed that if $\Refl_1(\kappa)$ holds then there is a forcing which adds a non-reflecting weakly compact subset of $\kappa$ and preserves the $\omega$-weak compactness of $\kappa$, hence the $\omega$-weak compactness of $\kappa$ does not imply $\Refl_1(\kappa)$. The first author and Hiroshi Sakai \cite{MR4050036} showed that $\Refl_1(\kappa)$ can hold at the least $\omega$-weakly compact cardinal, and hence $\Refl_1(\kappa)$ does not imply the $\Pi^1_2$-indescribability of $\kappa$. Just as $\square(\kappa)$ and $\Refl(\kappa)$ can hold simultaneously relative to a weakly compact cardinal, we will prove that $\square_1(\kappa)$ and $\Refl_1(\kappa)$ can hold simultaneously relative to a $\Pi^1_2$-indescribable cardinal; this provides a new consistency result which does not follow from the results in \cite{BrickhillWelch} due to the fact that,
if $V = L$, then $\Refl_1(\kappa)$ holds at a weakly compact cardinal if and only if
$\kappa$ is $\Pi^1_2$-indescribable.

\begin{theorem}\label{theorem_1_square_and_reflection}
Suppose that $\kappa$ is $\Pi^1_2$-indescribable and the $\GCH$ holds. Then there is a cofinality-preserving forcing extension in which
\begin{enumerate}
\item $\square_1(\kappa)$ holds,
\item $\Refl_1(\kappa)$ holds and
\item $\kappa$ is $\kappa^+$-weakly compact.
\end{enumerate}
\end{theorem}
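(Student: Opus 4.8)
Proof proposal for Theorem 1.2 ($\square_1(\kappa)$, $\Refl_1(\kappa)$, and $\kappa^+$-weak compactness from a $\Pi^1_2$-indescribable cardinal).

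\textit{Proof proposal.} The plan is to force with a two-step iteration $\P = \R * \dot\S$. Here $\R$ is a reverse Easton preparatory iteration of length $\kappa$ whose only nontrivial stages are at inaccessible $\alpha<\kappa$, where it forces with (a variant of) the generalized $\square_1(\alpha)$-forcing; the role of $\R$ is to plant, at the inaccessibles below $\kappa$, enough coherent auxiliary material that the embeddings witnessing the $\Pi^1_2$-indescribability of $\kappa$ can later be lifted. The second factor $\dot\S$ names the generalized $\square_1(\kappa)$-forcing (the one used to prove Theorem~\ref{theorem_1_square_wc}), as computed in $V^{\R}$; its generic object is a coherent sequence $\vec C$ of $1$-clubs of length $\kappa$. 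Both factors are ${\le}\kappa$-strategically closed (or reverse Easton iterations of such posets) and, by $\GCH$ — which $\R$ preserves — of size ${\le}\kappa$, hence $\kappa^+$-c.c.; this gives cofinality preservation for $\P$, and also shows $\P$ adds no bounded subsets of $\kappa$, so $H_\kappa$ is unchanged.

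Next one checks clause (1), that $\square_1(\kappa)$ holds in $V^{\P}$. By the standard density argument, $\dot\S$ adds no $1$-club thread through $\vec C$; and since $\vec C$ is generic over $V^{\R}$ while $\R$ has size $\kappa$, no set lying in $V^{\R}$ threads it either. One must also check that the sequence is genuinely coherent and not merely a trivial $\square_1(\kappa)$-sequence, but this follows from the verification of clause (2) below: once $\Refl_1(\kappa)$ holds, $\kappa$ is weakly compact and stationary reflection holds at unboundedly many $\alpha<\kappa$, so the coherence is nonvacuous.

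The heart of the argument is clauses (2) and (3). Fix $A\subseteq\kappa^+$ in $V^{\P}$ coding whatever is to be reflected — for clause (2), a nice $\P$-name $\dot S$ for an arbitrary weakly compact $S\subseteq\kappa$, together with $\P$. Using the $\Pi^1_2$-indescribability of $\kappa$, choose in $V$ an elementary embedding $j\colon M\to N$ with $\crit(j)=\kappa$, where $M$ is transitive of size $\kappa^+$ with ${}^{<\kappa}M\subseteq M$, $M\models\ZF^-+\text{``$\kappa$ is weakly compact''}$, $\mathcal P(\kappa)^V\cup\{\P,\dot A,\dots\}\subseteq M$ together with all nice $\P$-names for subsets of $\kappa$, $j(\kappa)>\kappa^+$, and with $N$ correct enough to compute stationarity below $\kappa$ and to think $\kappa$ is weakly compact. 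Working in $V^{\P}$, one lifts $j$ to $\hat\jmath\colon M[G]\to N[G^*]$, where $G$ is the $\P$-generic and $G^*$ is a $j(\P)$-generic over $N$ extending $j[G]=G$: using that $j(\P)$ factors as $\P$ followed by a suitably closed tail, the tail generic is assembled by hand from the closure of the tail and the chain condition. The crucial and most delicate point is to pass the lift through the final $\square_1$-step, which requires a master condition: a $1$-club in $\kappa$, as computed in $N[G^*]$, threading $\vec C$. No such thread can exist in $V^{\P}$ itself — that is exactly $\square_1(\kappa)$ — but the preparation $\R$ is arranged precisely so that the planted partial threads at the inaccessibles below $\kappa$ assemble, inside $N[G^*]$, into such a $1$-club; checking that this assembled set is indeed $1$-club in $N[G^*]$, in particular closed under the stationary-reflection points that $N[G^*]$ recognizes, and that it threads $\vec C$, is where the strength of $\Pi^1_2$-indescribability over mere $\kappa^+$-weak compactness is spent — mirroring, one level up, the use of weak compactness in the construction of a $\square(\kappa)$-sequence compatible with $\Refl(\kappa)$ in \cite{MR3730566}. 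This is the step I expect to be the main obstacle.

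Finally, granting the lift $\hat\jmath\colon M[G]\to N[G^*]$, both remaining conclusions follow. For clause (3): an appropriate restriction of $\hat\jmath$ witnesses that $\kappa$ is $\kappa^+$-weakly compact in $V^{\P}$. For clause (2): given weakly compact $S\subseteq\kappa$ in $V^{\P}$, the choices above make both $M[G]$ and $N[G^*]$ compute weak compactness of subsets of $\kappa$ correctly, and $\hat\jmath(S)\cap\kappa=S$ since $\crit(\hat\jmath)=\kappa$; hence $N[G^*]\models\text{``$\hat\jmath(S)\cap\kappa$ is weakly compact in $\kappa$''}$, so $N[G^*]\models\exists\beta<j(\kappa)\,\text{``$\hat\jmath(S)\cap\beta$ is weakly compact in $\beta$''}$ (witnessed by $\beta=\kappa$). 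Pulling this back through $\hat\jmath$ yields $\beta<\kappa$ with $S\cap\beta$ weakly compact in $\beta$ in $M[G]$, hence in $V^{\P}$, which is precisely $\Refl_1(\kappa)$.
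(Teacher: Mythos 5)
Your proposal has the right general shape (square-sequence forcing at $\kappa$ on top of a preparation, lift an embedding with a master condition), but the two places where it is vague are exactly where the actual work lies, and your proposed fixes would not succeed. First, the master condition. You correctly flag that lifting through the top $\square_1$-step needs a $1$-club in $N[G^*]$ threading $\vec C$, and you hope the preparation $\R$ "plants partial threads" below $\kappa$ that assemble into one. This cannot work as described: whatever is added at an inaccessible $\alpha<\kappa$ coheres (at best) with the local sequence added at stage $\alpha$, not with the top sequence $\vec C$, and nothing in a preparation consisting only of $\square_1(\alpha)$-forcings produces a thread through $\vec C$. The paper's mechanism is structurally different: the threading poset $\T(\vec C(\kappa))$ is forced explicitly as an auxiliary factor at stage $\kappa$ (and a fast-function/weak Laver function preparation below $\kappa$ is used so that $j(\P_\kappa)$ has precisely this stage-$\kappa$ forcing), the thread it adds is the master condition, and the final model is chosen to \emph{omit} the threading generic so that $\square_1(\kappa)$ survives (compare Theorem~\ref{theorem_forcing_1square_at_a_weakly_compact}, where the analogous thread generic is built over a $\kappa$-model by strategic closure and is not a thread in the real extension). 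Relatedly, the embedding you posit — $|M|=\kappa^+$, $\mathcal P(\kappa)^V\subseteq M$, $j(\kappa)>\kappa^+$ — is far more than $\Pi^1_2$-indescribability provides; it would essentially make $\kappa$ measurable. All you get is Hauser's characterization (Theorem~\ref{theorem_hauser}): $\kappa$-models with $\Pi^1_1$-correct targets, and the whole proof has to be run at that level.

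Second, your argument for $\Refl_1(\kappa)$ is internally inconsistent with clause (1). You ask that the lifted target $N[G^*]$ (which contains a thread through $\vec C$) compute weak compactness of arbitrary subsets of $\kappa$ correctly in $V^{\P}$; if such embeddings existed for every parameter, $\kappa$ would be $\Pi^1_2$-indescribable in $V^{\P}$, contradicting $\square_1(\kappa)$ by Proposition~\ref{proposition_n_square_denies_n_plus_1_indescribability} (this is exactly the phenomenon isolated in Remark~\ref{rem_groundmodelGenericBreaksCorrectness}). The paper instead proves $\Pi^1_2$-indescribability in the \emph{auxiliary} extension that includes the thread generic, reflects the given set there, and pulls the reflection back by $V_\kappa$-absoluteness; for this it is essential that any $S$ weakly compact in the final model remain weakly compact in that auxiliary extension, which is arranged by an additional $\kappa^+$-length bookkeeping iteration $\Q_{\kappa,1}$ shooting $1$-clubs through the complements of all stationary sets that the threading forcing could render non-weakly-compact. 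Your $\P=\R*\dot\S$ has no analogue of this component, so the transfer of reflection back to the final model has no support; the same applies to your clause (3), which in the paper goes through $\Pi^1_2$-indescribability of the auxiliary model together with Proposition~\ref{proposition_nonresurection}, not through "an appropriate restriction" of a single lifted embedding. Finally, your claim that a "standard density argument" shows $\dot\S$ adds no thread is not right: by Theorem~\ref{theorem_reflection_characterization}, the generic sequence is unthreadable exactly when $\Refl_0(\kappa)$ holds in the extension, and establishing that (and the stronger unthreadability after the further factor corresponding to $\Q_{\kappa,1}$, which the paper handles by a mutual-genericity argument with two threads) is part of the work, not a routine density check.
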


In Section \ref{section_preliminaries}, using $n$-club subsets of $\kappa$, we formulate a generalization of $\square_1(\kappa)$ to higher degrees of indescribability. It is easily seen that $\square_n(\kappa)$ implies that $\kappa$ is not $\Pi^1_{n+1}$-indescribable (see Proposition \ref{proposition_n_square_denies_n_plus_1_indescribability} below). However, for technical reasons outlined in Section \ref{section_questions}, our methods do not seem to show that $\square_n(\kappa)$ can hold nontrivially (see Definition \ref{definition_n_square_holds_trivially}) when $\kappa$ is $\Pi^1_n$-indescribable. Our methods do allow for a generalization of Hellsten's $1$-club shooting forcing to $n$-club shooting, and we also show that, if $S$ is a $\Pi^1_n$-indescribable set, a $1$-club can be shot through
$S$ while preserving the $\Pi^1_n$-indescribability of all $\Pi^1_n$-indescribable subsets of $S$.

Finally, we consider the influence of $\square_n(\kappa)$ on \emph{simultaneous} reflection of
$\Pi^1_n$-indescribable sets. We let $\Refl_n(\kappa,\mu)$ denote the following simultaneous reflection principle: $\kappa$ is $\Pi^1_n$-indescribable and whenever $\{S_\alpha\st\alpha<\mu\}$ is a collection of $\Pi^1_n$-indescribable sets, there is a $\beta<\kappa$ such that $S_\alpha\cap\beta$ is $\Pi^1_n$-indescribable for all $\alpha < \mu$. In Section \ref{section_applications}, we show that for $n\geq 1$, if $\square_n(\kappa)$ holds at a $\Pi^1_n$-indescribable cardinal, then the simultaneous reflection principle $\Refl_n(\kappa,2)$ fails (see Theorem \ref{theorem_n_square_refutes_simultaneous_refl}). As a consequence, we show that relative to a $\Pi^1_2$-indescribable cardinal, it is consistent that $\Refl_1(\kappa)$ holds and $\Refl_1(\kappa,2)$ fails (see Corollary \ref{corollary_reflection}).

\section{The principles $\square_n(\kappa)$}\label{section_preliminaries}
Suppose that $\kappa$ is a cardinal. A set $S\subseteq\kappa$ is \emph{$\Pi^1_n$-indescribable} if for every $A\subseteq V_\kappa$ and every $\Pi^1_n$-sentence $\varphi$, whenever $(V_\kappa,\in,A)\models\varphi$ there is an $\alpha\in S$ such that $(V_\alpha,\in,A\cap V_\alpha)\models\varphi$. The cardinal $\kappa$ is said to be $\Pi^1_n$-\emph{indescribable} if $\kappa$ is a $\Pi^1_n$-indescribable subset of $\kappa$. The $\Pi^1_0$-indescribable cardinals are precisely the inaccessible cardinals, and, if $\kappa$ is inaccessible, then $S \subseteq \kappa$ is
$\Pi^1_0$-indescribable if and only if it is stationary. The $\Pi^1_1$-indescribable cardinals are precisely the weakly compact cardinals.

The $\Pi^1_n$-\emph{indescribability ideal} on $\kappa$ is
\[\Pi^1_n(\kappa)=\{X\subseteq\kappa\st \text{$X$ is not $\Pi^1_n$-indescribable}\},\]
the corresponding collection of positive sets is
\[\Pi^1_n(\kappa)^+=\{X\subseteq\kappa\st\text{$X$ is $\Pi^1_n$-indescribable}\}\]
and the dual filter is
\[\Pi^1_n(\kappa)^*=\{\kappa\setminus X\st X\in\Pi^1_n(\kappa)\}.\]
Clearly, if $\kappa$ is not $\Pi^1_n$-indescribable, then $\Pi^1_n(\kappa)=P(\kappa)$.
\Levy\ proved \cite{MR0281606} that if $\kappa$ is $\Pi^1_n$-indescribable, then $\Pi^1_n(\kappa)$ is a nontrivial normal ideal on $\kappa$.

A set $C\subseteq\kappa$ is called \emph{$0$-club} if it is a club. A set $X\subseteq\kappa$ is said to be \emph{$n$-closed} if it contains all of its $\Pi^1_{n-1}$-indescribable reflection points: whenever $\alpha<\kappa$ and $X\cap\alpha$ is $\Pi^1_{n-1}$-indescribable, then $\alpha\in X$ (note that such $\alpha$ must be $\Pi^1_{n-1}$-indescribable). If a set $C\subseteq\kappa$ is both $n$-closed and $\Pi^1_{n-1}$-indescribable, then $C$ is said to be an \emph{$n$-club} subset of $\kappa$. For example, $C\subseteq\kappa$ is $1$-club if and only if it is stationary and contains all of its inaccessible stationary reflection points, and $C\subseteq\kappa$ is $2$-club if and only if it is weakly compact and contains all of its weakly compact reflection points. Building on work of Sun \cite{MR1245524}, Hellsten showed \cite{MR2715639} that when $\kappa$ is a $\Pi^1_n$-indescribable cardinal, a set $S\subseteq\kappa$ is $\Pi^1_n$-indescribable if and only if $S\cap C\neq\emptyset$ for every $n$-club $C\subseteq\kappa$. Thus, when $\kappa$ is $\Pi^1_n$-indescribable, the collection of $n$-club subsets of $\kappa$ generates the filter $\Pi^1_n(\kappa)^*$. In particular, this implies that $n$-club sets are themselves $\Pi^1_n$-indescribable.

For $n<\omega$ and $X\subseteq\kappa$, we define the \emph{$n$-trace of $X$} to be
\[\Tr_n(X)=\{\alpha<\kappa\st X\cap\alpha\in \Pi^1_n(\alpha)^+\}.\]
Notice that when $X=\kappa$, $\Tr_n(\kappa)$ is the set of $\Pi^1_n$-indescribable cardinals below $\kappa$, and in particular $\Tr_0(\kappa)$ is the set of inaccessible cardinals less than $\kappa$.
For uniformity of notation, let us say that an ordinal $\alpha$ is \emph{$\Pi^1_{-1}$-indescribable} if it is a limit ordinal, and if $\alpha$ is a limit ordinal, $S \subseteq \alpha$ is
\emph{$\Pi^1_{-1}$-indescribable} if it is unbounded in $\alpha$. Thus, if $X \subseteq \kappa$, then $\Tr_{-1}(X) = \{\alpha < \kappa \st \sup(X \cap \alpha) = \alpha\}$.

\begin{definition} \label{definition_n_square}
Suppose $n<\omega$ and $\Tr_{n-1}(\kappa)$ is cofinal in $\kappa$. A sequence $\vec{C}=\<C_\alpha\st\alpha\in \Tr_{n-1}(\kappa)\>$ is called a \emph{coherent sequence of $n$-clubs} if
\begin{enumerate}
\item for all $\alpha\in \Tr_{n-1}(\kappa)$, $C_\alpha$ is an $n$-club subset of $\alpha$ and
\item for all $\alpha<\beta$ in $\Tr_{n-1}(\kappa)$, $C_\beta\cap\alpha\in\Pi^1_{n-1}(\alpha)^+$ implies $C_\alpha=C_\beta\cap\alpha$.
\end{enumerate}
We say that a set $C\subseteq\kappa$ is a \emph{thread} through a coherent sequence of $n$-clubs $$\vec{C}=\<C_\alpha\st\alpha\in \Tr_{n-1}(\kappa)\>$$ if $C$ is $n$-club and for all $\alpha\in \Tr_{n-1}(\kappa)$, $C\cap\alpha\in\Pi^1_{n-1}(\alpha)^+$ implies $C_\alpha=C\cap\alpha$.
A coherent sequence of $n$-clubs $\vec{C}=\<C_\alpha\st\alpha\in \Tr_{n-1}(\kappa)\>$ is called a \emph{$\square_n(\kappa)$-sequence} if there is no thread through $\vec{C}$. We say that $\square_n(\kappa)$ holds if there is a $\square_n(\kappa)$-sequence $\vec{C}=\<C_\alpha\st\alpha\in\Tr_{n-1}(\kappa)\>$.
\end{definition}

\begin{remark}
Note that $\square_0(\kappa)$ is simply $\square(\kappa)$. For $n=1$, the principle $\square_1(\kappa)$ states that there is a coherent sequence of $1$-clubs \[\< C_\alpha\st\alpha < \kappa \text{ is inaccessible}\>\] that cannot be threaded.
\end{remark}

\begin{remark} \label{remark_relationship}
Before we prove some basic results about $\square_n(\kappa)$, let us consider a similar principle due to Brickhill and Welch \cite{BrickhillWelch}. We will consider the case $n=1$ in detail. We will refer to the notion of $1$-club defined in \cite{BrickhillWelch} as \emph{strong $1$-club} in order to avoid confusion. A set $C\subseteq \kappa$ is a \emph{strong $1$-club} if it is stationary in $\kappa$ and whenever $C\cap\alpha$ is stationary in $\alpha$ then $\alpha\in C$. This notion of strong $1$-club is precisely the same notion considered in \cite{MR1245524}. Thus, it follows from the results of \cite{MR1245524} that when $\kappa$ is weakly $\Pi^1_1$-indescribable\footnote{Recall that a set $S\subseteq\kappa$ is \emph{weakly $\Pi^1_1$-indescribable} if for all $A\subseteq\kappa$ and all $\Pi^1_1$ sentences $\varphi$, $(\kappa,\in,A)\models\varphi$ implies that there is and $\alpha\in S$ such that $(\alpha,\in,A\cap\alpha)\models\varphi$.} the collection of strong $1$-clubs generates the weak $\Pi^1_1$-indescribable filter. Moreover, when $\kappa$ is weakly compact, the collection of strong $1$-club subsets of $\kappa$ generates the weakly compact filter $\Pi^1_1(\kappa)$. For $S\subseteq\kappa$, Brickhill and Welch \cite{BrickhillWelch} define $\square^1(\kappa)$ to be the principle asserting the existence of a sequence $\<C_\alpha\st\cf(\alpha)>\omega\>$ such that
\begin{enumerate}
\item $C_\alpha$ is a strong $1$-club in $\alpha$,
\item whenever $C_\alpha\cap\beta$ is stationary in $\beta$ we have $C_\beta=C_\alpha\cap\beta$ and
\item there is no $C\subseteq\kappa$ that is strong $1$-club in $\kappa$ such that whenever $C\cap\alpha$ is stationary in $\alpha$ we have $C_\alpha=C\cap\alpha$.
\end{enumerate}
Such a sequence is called a $\square^1(\kappa)$-sequence. Let us show that when $\kappa$ is weakly compact, the Brickhill-Welch principle $\square^1(\kappa)$ implies our principle $\square_1(\kappa)$. Suppose $\<C_\alpha\st\cf(\alpha)>\omega\>$ is a $\square^1(\kappa)$-sequence. Clearly, $\<C_\alpha\st\alpha\in\Tr_0(\kappa)\>$ is a coherent sequence of $1$-clubs. For the sake of contradiction, suppose $C\subseteq\kappa$ is a $1$-club thread through $\<C_\alpha\st\alpha\in\Tr_0(\kappa)\>$. Using the coherence of $\<C_\alpha\st\cf(\alpha)>\omega\>$ it is straightforward to check that $C$ is a strong $1$-club and whenever $C\cap\alpha$ is stationary in $\alpha$ we have $C_\alpha=C\cap\alpha$. This contradicts $\square^1(\kappa)$. Thus $\square^1(\kappa)$ implies $\square_1(\kappa)$. It is not known whether $\square_1(\kappa)$ implies $\square^1(\kappa)$.

Brickhill and Welch also generalized their definition to obtain the principles $\square^n(\kappa)$, and again it is not difficult to see that $\square^n(\kappa)$ implies our principle $\square_n(\kappa)$.
\end{remark}

Generalizing the fact that $\square(\kappa)$ implies $\kappa$ is not weakly compact, let us show that $\square_n(\kappa)$ implies $\kappa$ is not $\Pi^1_{n+1}$-indescribable. To do this, we first recall the Hauser characterization of $\Pi^1_n$-indescribability.

We say that a transitive model $\< M,\in\>$ is a $\kappa$-\emph{model} if $|M| = \kappa$, $\kappa\in M$, $M^{\lt\kappa}\subseteq M$, and $M\models\ZFC^-$ ($\ZFC$ without the power set axiom). It is not difficult to see that if $\kappa$ is inaccessible, then $V_\kappa$ is an element of every $\kappa$-model $M$.

\begin{definition}[Hauser]
Suppose $\kappa$ is inaccessible. For $n\geq 0$, a $\kappa$-model $N$ is \emph{$\Pi^1_n$-correct at $\kappa$} if and only if
\[V_\kappa\models\varphi \iff (V_\kappa\models\varphi)^N\]
for all $\Pi^1_n$-formulas $\varphi$ whose parameters are contained in $N\cap V_{\kappa+1}$.
\end{definition}

\begin{remark}
	Notice that every $\kappa$-model is $\Pi^1_0$-correct at $\kappa$.
\end{remark}

\begin{theorem}[Hauser]\label{theorem_hauser}
The following statements are equivalent for every inaccessible cardinal $\kappa$, every subset $S \subseteq \kappa$, and all $0<n<\omega$.
\end{theorem}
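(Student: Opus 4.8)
The statement is Hauser's embedding characterization of $\Pi^1_n$-indescribability for sets, so the plan is to prove the equivalence by running it through the embedding formulation: call $(2)$ the clause asserting that for every $\kappa$-model $M$ with $S\in M$ there are a $\kappa$-model $N$ that is $\Pi^1_{n-1}$-correct at $\kappa$ and an elementary $j\colon M\to N$ with $\crit(j)=\kappa$ and $\kappa\in j(S)$. The equivalences among the remaining (minor) reformulations — e.g.\ allowing a prescribed parameter $A\in H_{\kappa^+}$ inside $M$, or stating things for $(V_\kappa,\in,A)$ directly — are routine once $(1)\Leftrightarrow(2)$ is established, so I focus on that.

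For $(2)\Rightarrow(1)$: given $A\subseteq V_\kappa$ and a $\Pi^1_n$-sentence $\varphi$ with $(V_\kappa,\in,A)\models\varphi$, first manufacture a $\kappa$-model $M$ with $A,S,V_\kappa\in M$ as the transitive collapse of an elementary submodel of $H_{\kappa^+}$ of size $\kappa$, closed under ${<}\kappa$-sequences and containing $V_\kappa\cup\{A,S,\kappa\}$; here inaccessibility of $\kappa$ (so $\kappa^{<\kappa}=\kappa$) is what makes the construction possible. Apply $(2)$ to get $N,j$. Since $\crit(j)=\kappa$ and $A\subseteq V_\kappa$, we have $V_\kappa^N=V_\kappa$ and $j(A)\cap V_\kappa=A$. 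Writing $\varphi=\forall X\,\sigma(X)$ with $\sigma\in\Sigma^1_{n-1}$, and using that $N$, being $\Pi^1_{n-1}$-correct at $\kappa$, agrees with $V$ on $\Sigma^1_{n-1}$-facts about $(V_\kappa,\in,A)$ with parameters in $N$, one checks that $N\models$ ``$(V_\kappa,\in,j(A)\cap V_\kappa)\models\varphi$''. Then $\kappa<j(\kappa)$ witnesses in $N$ that $\exists\alpha\,[\alpha\in j(S)\wedge (V_\alpha,\in,j(A)\cap V_\alpha)\models\varphi]$, so by elementarity $M$ satisfies the corresponding statement with $S,A,\kappa$ in place of $j(S),j(A),j(\kappa)$; this yields a genuine $\alpha\in S$ with $M\models$ ``$(V_\alpha,\in,A\cap V_\alpha)\models\varphi$''. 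Because $\alpha<\kappa$, every subset of $V_\alpha$ lies in $V_{\alpha+1}\subseteq V_\kappa\subseteq M$, so $M$ computes second-order truth over $V_\alpha$ correctly, and therefore $(V_\alpha,\in,A\cap V_\alpha)\models\varphi$ really holds.

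For $(1)\Rightarrow(2)$: fix a $\kappa$-model $M$ with $S\in M$ and a subset $A\subseteq V_\kappa$ coding $(M,\in,S)$. Since $n\geq1$, $S$ is $\Pi^1_1$-indescribable, so $\kappa$ is weakly compact and we build an $M$-normal $M$-ultrafilter $U$ on $\kappa$ with $S\in U$; put $N=\Ult(M,U)$ and $j=j_U$. As $M^{<\kappa}\subseteq M$ and $U$ is countably complete, $N$ is well-founded, and (identifying $N$ with its transitive collapse) $N$ is again a $\kappa$-model; {\L}o\'s gives $\crit(j)=\kappa$ and, since $S\in U$, $\kappa\in j(S)$. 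The real content is arranging $N$ to be $\Pi^1_{n-1}$-correct at $\kappa$, which does \emph{not} come for free because $M$ itself need not compute $\Pi^1_{n-1}$-truth over $V_\kappa$ correctly. Instead one builds $U$ by recursion along an enumeration $\langle A_\xi:\xi<\kappa\rangle$ of $P(\kappa)\cap M$, deciding at stage $\xi$ whether $A_\xi$ or $\kappa\setminus A_\xi$ enters $U$ while preserving the ${<}\kappa$-intersection property, normality, $S\in U$, and a correctness demand: for every $\Pi^1_{n-1}$-formula $\theta$ and every $M$-parameter $p$ coding a subset of $V_\kappa$, the set of $\alpha$ at which $(V_\alpha,\in,p\cap V_\alpha)$ reflects the $\theta$-truth of $(V_\kappa,\in,p)$ is forced into $U$. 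That all these requirements can be met simultaneously at each stage is exactly where $\Pi^1_n$-indescribability of $S$ enters: the conjunction of ``the construction so far has the intersection and normality properties'' with the $\Pi^1_{n-1}$-correctness clause is expressible by a $\Pi^1_n$-sentence over $(V_\kappa,\in,A)$ that holds in $V_\kappa$, hence reflects to a nonempty (indeed $\Pi^1_n$-indescribable) set of $\alpha\in S$, and such reflection points supply the sets that keep the recursion alive with correctness intact. A final application of {\L}o\'s, combined with this correctness property, shows that $N$ agrees with $V$ on $\Pi^1_{n-1}$-statements about $(V_\kappa,\in,p)$ for every $p\in N\cap V_{\kappa+1}$, i.e.\ $N$ is $\Pi^1_{n-1}$-correct at $\kappa$.

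The main obstacle is this last direction — specifically, the simultaneous bookkeeping required so that the ultrapower is $\Pi^1_{n-1}$-correct: one must cast the ``survive-and-stay-correct'' demand at each stage as a single $\Pi^1_n$ reflection instance (the outermost second-order quantifier choosing the next $U$-set, the $\Pi^1_{n-1}$ body enforcing correctness) and verify that the reflection point obtained inside $S$ genuinely yields a set that can be adjoined to $U$ without destroying the intersection property or normality. When $n=1$ everything collapses: $\Pi^1_0$-correctness holds automatically for $\kappa$-models, so $(1)\Rightarrow(2)$ is just the classical $M$-ultrafilter construction witnessing weak compactness, and $(2)\Rightarrow(1)$ is the standard elementary-embedding argument.
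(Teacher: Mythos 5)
The paper does not actually prove this theorem: it is stated as a known result of Hauser and used as a black box, so there is no in-paper argument to compare against. Judged on its own terms, your direction $(2)\Rightarrow(1)$ is correct and standard (downward $\Pi^1_{n-1}$/upward $\Sigma^1_{n-1}$ transfer between $V$ and a $\Pi^1_{n-1}$-correct $N$, then reflect via $\kappa\in j(S)$ and elementarity, using $V_\kappa\in M$ to see that $M$ computes second-order truth over $V_\alpha$ correctly). The problems are in $(1)\Rightarrow(2)$, and in the dismissal of clause $(4)$ as routine: arranging $j,M\in N$ is an additional argument (one must get a code for $U$ and for $M$ into the target), not a formal consequence of $(1)\Leftrightarrow(2)$.

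The concrete gaps in your $(1)\Rightarrow(2)$ are these. First, your correctness demand is imposed only for ``$M$-parameters $p$ coding a subset of $V_\kappa$,'' but $\Pi^1_{n-1}$-correctness of $N=\Ult(M,U)$ must hold for every parameter in $N\cap V_{\kappa+1}$, and this set properly contains $M\cap V_{\kappa+1}$ in general: a typical element is $q=[f]_U=\{x\in V_\kappa \st \{\alpha \st x\in f(\alpha)\}\in U\}$ for a non-constant $f\in M$, and correctness for $M$-parameters does not transfer to such $q$. If you try to repair this by adding, for each $f$ and each $\theta$, the requirement ``$V_\kappa\models\theta([f]_U)$ iff $\{\alpha \st V_\alpha\models\theta(f(\alpha))\}\in U$,'' you hit a circularity: $[f]_U$ is only determined by the completed ultrafilter, so the requirement cannot be checked or enforced at any bounded stage of the recursion. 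Second, the stage-by-stage scheme itself is not justified: invoking $\Pi^1_n$-indescribability separately at each of $\kappa$ many stages produces decisions witnessed by possibly different reflection points, and nothing in the sketch shows these decisions cohere so that the ${<}\kappa$-intersection property survives limit stages (note that fixing a single reflection point $\alpha$ and letting it decide everything yields the principal ultrafilter $U_\alpha\ni\{\alpha\}$, which gives the identity embedding). The standard proofs avoid both problems by a single global application of indescribability --- e.g.\ expressing ``there is no $M$-ultrafilter $U$ with $S\in U$ whose ultrapower is well-founded and $\Pi^1_{n-1}$-correct'' as one $\Pi^1_n$ sentence over $(V_\kappa,\in,E)$ for a code $E$ of $M$ and refuting it at a reflection point --- rather than by per-stage bookkeeping. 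You correctly flag this step as the main obstacle, but the mechanism you propose for it does not close the gap.
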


\begin{enumerate}
\item $S$ is $\Pi^1_n$-indescribable.
\item For every $\kappa$-model $M$ with $S\in M$, there is a $\Pi^1_{n-1}$-correct $\kappa$-model $N$ and an elementary embedding $j:M\to N$ with $\crit(j)=\kappa$ such that $\kappa\in j(S)$.
\item For every $A\subseteq\kappa$ there is a $\kappa$-model $M$ with $A,S\in M$ for which there is a $\Pi^1_{n-1}$-correct $\kappa$-model $N$ and an elementary embedding $j:M\to N$ with $\crit(j)=\kappa$ such that $\kappa\in j(S)$.
\item For every $A\subseteq\kappa$ there is a $\kappa$-model $M$ with $A,S\in M$ for which there is a $\Pi^1_{n-1}$-correct $\kappa$-model $N$ and an elementary embedding $j:M\to N$ with $\crit(j)=\kappa$ such that $\kappa\in j(S)$ and $j,M\in N$.
\end{enumerate}

\begin{lemma}\label{lemma_indescribable_union}
Suppose $\kappa$ is a cardinal. If $S\in \Pi^1_n(\kappa)^+$ and $S_\alpha\in\Pi^1_n(\alpha)^+$ for each $\alpha\in S$, then $\bigcup_{\alpha\in S}S_\alpha\in\Pi^1_n(\kappa)^+$.
\end{lemma}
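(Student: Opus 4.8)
The plan is to prove this directly from the syntactic definition of $\Pi^1_n$-indescribability rather than through Hauser's embedding characterization (Theorem~\ref{theorem_hauser}), which would also work but is heavier machinery than needed here. First observe that the statement is vacuous unless the hypotheses are satisfiable: $S\in\Pi^1_n(\kappa)^+$ already entails that $\kappa$ is $\Pi^1_n$-indescribable, since any $\Pi^1_n$-indescribable subset of $\kappa$ witnesses that $\kappa$ itself reflects $\Pi^1_n$-sentences; likewise each $\alpha\in S$ is $\Pi^1_n$-indescribable because $S_\alpha\in\Pi^1_n(\alpha)^+$ is nonempty. Hence for $n\geq 0$ all the ordinals involved are inaccessible and all the relevant $V_\alpha$ behave as expected.

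Write $T=\bigcup_{\alpha\in S}S_\alpha$. To show $T\in\Pi^1_n(\kappa)^+$, fix an arbitrary $A\subseteq V_\kappa$ and an arbitrary $\Pi^1_n$-sentence $\varphi$ with $(V_\kappa,\in,A)\models\varphi$; the goal is to find $\gamma\in T$ with $(V_\gamma,\in,A\cap V_\gamma)\models\varphi$. Since $S$ is $\Pi^1_n$-indescribable, there is $\alpha\in S$ with $(V_\alpha,\in,A\cap V_\alpha)\models\varphi$. Now apply the definition of $\Pi^1_n$-indescribability one level down inside $\alpha$: since $\alpha\in S$ we have $S_\alpha\in\Pi^1_n(\alpha)^+$, so applying this to the set $A\cap V_\alpha\subseteq V_\alpha$ and the $\Pi^1_n$-sentence $\varphi$ (which holds in $(V_\alpha,\in,A\cap V_\alpha)$ by the previous sentence) produces $\gamma\in S_\alpha$ with $(V_\gamma,\in,(A\cap V_\alpha)\cap V_\gamma)\models\varphi$. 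As $\gamma<\alpha$ gives $V_\gamma\subseteq V_\alpha$, we have $(A\cap V_\alpha)\cap V_\gamma=A\cap V_\gamma$, so $(V_\gamma,\in,A\cap V_\gamma)\models\varphi$ with $\gamma\in S_\alpha\subseteq T$. Since $A$ and $\varphi$ were arbitrary, $T$ is $\Pi^1_n$-indescribable.

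The degenerate cases covered by the paper's conventions fall out of this or are even simpler: for $n=0$ it is the standard fact that a union of stationary sets indexed along a stationary set is stationary (a special case of the displayed argument), and for $n=-1$ it reduces to $\sup T=\sup_{\alpha\in S}\sup S_\alpha=\sup S=\kappa$. I do not anticipate a genuine obstacle: the whole content is that the definition of $\Pi^1_n$-indescribability ``composes''---a reflection of $\varphi$ into $S$ lands at some $\alpha$ where $\varphi$ again holds over $V_\alpha$, whereupon the indescribability of $S_\alpha$ inside $\alpha$ pushes the reflection one step further into $S_\alpha$. The only points needing a moment's care are the trivial relativization identity $(A\cap V_\alpha)\cap V_\gamma=A\cap V_\gamma$ and, for completeness, the remark that the hypotheses force $\kappa$ and each $\alpha\in S$ to be $\Pi^1_n$-indescribable so that the definition applies verbatim.
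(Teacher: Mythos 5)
Your proof is correct, but it takes a genuinely different route from the paper's. You argue directly from the syntactic definition: reflect the $\Pi^1_n$-sentence $\varphi$ with parameter $A$ first into $S$, landing at some $\alpha\in S$ where $(V_\alpha,\in,A\cap V_\alpha)\models\varphi$, and then use the $\Pi^1_n$-indescribability of $S_\alpha$ inside $\alpha$ (with parameter $A\cap V_\alpha$) to push the reflection into $S_\alpha$; the identity $(A\cap V_\alpha)\cap V_\gamma=A\cap V_\gamma$ makes the composition go through verbatim. This is sound, self-contained, and in fact more elementary than what is in the paper, since it uses neither Hauser's embedding characterization nor the Sun--Hellsten $n$-club machinery. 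The paper instead works with the $n$-club filter base: it fixes an arbitrary $n$-club $C\subseteq\kappa$, shows that $\Tr_{n-1}(C)$ is again an $n$-club (it is $n$-closed because $\Tr_{n-1}(C)\subseteq C$ by $n$-closure of $C$, and it meets every $n$-club $D$ since $C\cap D$ is an $n$-club), picks $\alpha\in S\cap\Tr_{n-1}(C)$, and then uses that $C\cap\alpha$ is an $n$-club in $\alpha$ to find a point of $S_\alpha$ in $C$; the conclusion then follows from the characterization of $\Pi^1_n$-indescribable sets as those meeting every $n$-club. What your approach buys is economy and generality: no appeal to the cited characterization theorems, just composition of reflection. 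What the paper's approach buys is that the auxiliary fact it isolates---that the $(n-1)$-trace of an $n$-club is an $n$-club---is exactly the kind of trace argument reused immediately afterwards (e.g., in Corollary~\ref{cor_trace_n-1_indescribable} and later in the simultaneous reflection results), so it keeps the whole development inside the $n$-club framework the paper relies on. Your side remarks (that the hypotheses force $\kappa$ and each $\alpha\in S$ to be $\Pi^1_n$-indescribable, and the degenerate $n=0$, $n=-1$ cases) are harmless but not needed for the main argument; the only slightly loose phrase is attributing the indescribability of $\alpha$ to $S_\alpha$ ``being nonempty''---the correct reason is monotonicity: a $\Pi^1_n$-indescribable subset of $\alpha$ witnesses that $\alpha$ itself is $\Pi^1_n$-indescribable.
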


\begin{proof}
Fix an $n$-club $C$ in $\kappa$. The set $\Tr_{n-1}(C)$ is $n$-closed because if $\Tr_{n-1}(C)\cap \alpha\in \Pi^1_{n-1}(\alpha)^+$, then $C\cap\alpha\in \Pi^1_{n-1}(\alpha)^+$ since, by $n$-closure of $C$, $\Tr_{n-1}(C)\subseteq C$. Also, $\Tr_{n-1}(C)$ meets every $n$-club $D$ because the intersection $C\cap D$ is an $n$-club. Thus, $\Tr_{n-1}(C)$ is an $n$-club. It follows that there is an $\alpha\in S\cap \Tr_{n-1}(C)$. Since $S_\alpha$ is $\Pi^1_n$-indescribable in $\alpha$ and $C\cap\alpha$ is an $n$-club in $\alpha$, we have $S_\alpha\cap C\cap\alpha\neq\emptyset$, and hence $\left(\bigcup_{\alpha\in S}S_\alpha\right)\cap C\neq\emptyset$.
\end{proof}

A simple complexity calculation shows that
for every $n<\omega$, there is a $\Pi^1_{n+1}$-formula $\chi_n(X)$ such that for every $\kappa$ and every $S\subseteq\kappa$,
$(V_\kappa,\in)\models\chi_n(S)$ if and only if $S$ is $\Pi^1_n$-indescribable (see \cite[Corollary 6.9]{MR1994835}).  It therefore follows that there is a $\Pi^1_n$-formula $\psi_n(X)$ such that for every $\kappa$ and every $C\subseteq\kappa$, $(V_\kappa,\in)\models\psi_n(C)$ if and only if $C$ is an $n$-club subset of $\kappa$. Thus, in particular, a $\Pi^1_n$-correct model $N$ is going to be correct about $\Pi^1_{n-1}$-indescribable sets as well as $n$-clubs.

\begin{corollary}\label{cor_trace_n-1_indescribable}
Suppose $\kappa$ is $\Pi^1_n$-indescribable. If $S\in \Pi^1_n(\kappa)^+$, then \[\Tr_{n-1}(S)=\{\alpha<\kappa\st S\cap\alpha\in \Pi_{n-1}^1(\alpha)^+\}\] is an $n$-club. 
\end{corollary}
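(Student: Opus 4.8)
The plan is to show that $\Tr_{n-1}(S)$ is both $n$-closed and $\Pi^1_n$-indescribable, which are exactly the two requirements for being an $n$-club. Fix $S \in \Pi^1_n(\kappa)^+$.

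First I would verify $n$-closure. Suppose $\alpha < \kappa$ and $\Tr_{n-1}(S) \cap \alpha \in \Pi^1_{n-1}(\alpha)^+$; I must show $\alpha \in \Tr_{n-1}(S)$, i.e.\ $S \cap \alpha \in \Pi^1_{n-1}(\alpha)^+$. For each $\beta \in \Tr_{n-1}(S) \cap \alpha$ we have $S \cap \beta \in \Pi^1_{n-1}(\beta)^+$ by definition of the trace. Applying Lemma~\ref{lemma_indescribable_union} inside $\alpha$ (with $n$ there replaced by $n-1$, the set $\Tr_{n-1}(S) \cap \alpha$ playing the role of the $\Pi^1_{n-1}(\alpha)^+$-set, and the family $\langle S \cap \beta : \beta \in \Tr_{n-1}(S) \cap \alpha\rangle$), we conclude $\bigcup_{\beta \in \Tr_{n-1}(S)\cap\alpha} (S \cap \beta) \in \Pi^1_{n-1}(\alpha)^+$. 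But this union is contained in $S \cap \alpha$, so $S \cap \alpha \in \Pi^1_{n-1}(\alpha)^+$ by upward closure of the dual filter (equivalently, a superset of a $\Pi^1_{n-1}$-indescribable set is $\Pi^1_{n-1}$-indescribable). Hence $\alpha \in \Tr_{n-1}(S)$, establishing $n$-closure. (For the degenerate case $n=1$ this is just the assertion that the stationary reflection points of a stationary set are closed under taking suprema of the relevant kind; for $n=0$, interpreting $\Pi^1_{-1}$-indescribable as ``unbounded,'' the trace $\Tr_{-1}(S)$ is closed, which is routine.)

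Next I would show $\Tr_{n-1}(S)$ is $\Pi^1_n$-indescribable using the Hauser characterization, Theorem~\ref{theorem_hauser}. Pick any $\kappa$-model $M$ with $S \in M$; by $\Pi^1_n$-indescribability of $S$ there is a $\Pi^1_{n-1}$-correct $\kappa$-model $N$ and an elementary embedding $j : M \to N$ with $\crit(j) = \kappa$ and $\kappa \in j(S)$. The key point is that $j(S) \cap \kappa = S$, so $j(S) \cap \kappa \in \Pi^1_{n-1}(\kappa)^+$; I need $N$ to recognize this, i.e.\ $N \models$ ``$j(S) \cap \kappa$ is $\Pi^1_{n-1}$-indescribable in $\kappa$.'' This follows from $\Pi^1_{n-1}$-correctness of $N$ at $\kappa$ together with the complexity remark preceding this corollary: being $\Pi^1_{n-1}$-indescribable is expressible by a $\Pi^1_{n-1}$-formula (using $\psi_{n-1}$, or directly $\chi_{n-1}$), which transfers down to $N$ since $S$ genuinely is $\Pi^1_{n-1}$-indescribable in $V$ and hence in $V_\kappa$. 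Therefore $N \models \kappa \in \Tr_{n-1}(j(S)) = j(\Tr_{n-1}(S))$, and applying Theorem~\ref{theorem_hauser}(2) in the reverse direction gives that $\Tr_{n-1}(S)$ is $\Pi^1_n$-indescribable. Combined with $n$-closure, $\Tr_{n-1}(S)$ is an $n$-club.

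The main obstacle is the correctness bookkeeping in the last paragraph: making sure that a $\Pi^1_{n-1}$-correct target model $N$ correctly computes $\Pi^1_{n-1}$-indescribability of the set $j(S) \cap \kappa = S$. This is where the complexity calculation quoted from \cite{MR1994835} (the $\Pi^1_n$-formula $\chi_{n-1}$, equivalently the $\Pi^1_{n-1}$-formula $\psi_{n-1}$) does the real work, and one should be slightly careful that the parameter $S = j(S) \cap \kappa$ lies in $N \cap V_{\kappa+1}$ so that correctness applies. Everything else — the union lemma application for $n$-closure, the upward closure of the indescribability filter — is routine.
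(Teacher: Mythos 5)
Your overall route is the same as the paper's: $n$-closure of $\Tr_{n-1}(S)$ via Lemma~\ref{lemma_indescribable_union} applied below $\alpha$ (together with upward closure of positivity), and $\Pi^1_n$-indescribability via a Hauser embedding $j:M\to N$ with $\Pi^1_{n-1}$-correct target, using $j(S)\cap\kappa=S$ to conclude $\kappa\in j(\Tr_{n-1}(S))$. The closure half is correct as written.

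The flawed step is precisely the one you identified as carrying the real work. It is not true that ``$X$ is $\Pi^1_{n-1}$-indescribable'' is expressible by a $\Pi^1_{n-1}$-formula over $V_\kappa$: by the complexity remark preceding the corollary, the formula $\chi_{n-1}$ expressing it is $\Pi^1_n$, and $\psi_{n-1}$ expresses ``$(n-1)$-club,'' not indescribability. Already for $n=1$ your claim would assert that stationarity of a subset of $\kappa$ is first-order over $V_\kappa$, which fails; and the paper explicitly notes that it is $\Pi^1_n$-correct models that are guaranteed to be correct about $\Pi^1_{n-1}$-indescribable sets. So you cannot get ``$N$ recognizes $S$ as $\Pi^1_{n-1}$-indescribable'' from two-way $\Pi^1_{n-1}$-correctness applied to a $\Pi^1_{n-1}$-formula. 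The conclusion is nevertheless true, because only the downward direction $V\Rightarrow N$ is needed and the statement has the right syntactic shape for it: if $N$ thought $S=j(S)\cap\kappa$ were not $\Pi^1_{n-1}$-indescribable, a witnessing $A\in N\cap V_{\kappa+1}$ and $\Pi^1_{n-1}$-sentence $\varphi$ would lie in $N$; by $\Pi^1_{n-1}$-correctness $(V_\kappa,\in,A)\models\varphi$ holds in $V$, and the failure of reflection at points of $S$ is first-order over $(V_\kappa,\in,A,S)$, hence absolute since $N$ computes $V_\kappa$ correctly; this would contradict the fact that $S$, being in $\Pi^1_n(\kappa)^+$, is also $\Pi^1_{n-1}$-indescribable in $V$. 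In short, $\Sigma^1_n$ facts with parameters in $N$ pass up from a $\Pi^1_{n-1}$-correct $N$ to $V$, so $\Pi^1_n$ facts pass down; with this substitution your argument coincides with the paper's. (A minor point: to conclude that $\Tr_{n-1}(S)$ is $\Pi^1_n$-indescribable you should invoke the ``for every $A$ there is a suitable $\kappa$-model containing $A$'' form, Theorem~\ref{theorem_hauser}(3), choosing $M$ to contain $A$ and $S$, rather than clause (2), since an arbitrary $\kappa$-model containing $\Tr_{n-1}(S)$ need not contain $S$.)
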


\begin{proof}
Suppose $S$ is $\Pi^1_n$-indescribable. First, let us argue that $\Tr_{n-1}(S)$ is $\Pi^1_n$-indescribable. Let $M$ be a $\kappa$-model with $S,\Tr_{n-1}(S)\in M$ and let $j:M\to N$ be an elementary embedding with critical point $\kappa$ such that $N$ is $\Pi^1_{n-1}$-correct and $\kappa\in j(S)$. The $\Pi^1_{n-1}$-correctness of $N$ implies that $j(S)\cap\kappa=S$ is a $\Pi^1_{n-1}$-indescribable subset of $\kappa$ in $N$. Thus, $\kappa\in j(\Tr_{n-1}(S))$. Hence $\Tr_{n-1}(S)$ is $\Pi^1_n$-indescribable.

It remains to show that $\Tr_{n-1}(S)$ is $n$-closed, which is equivalent to showing that if $\Tr_{n-1}(S)\cap\alpha\in \Pi^1_{n-1}(\alpha)^+$, then $S\cap \alpha\in\Pi^1_{n-1}(\alpha)^+$. More generally, observe that if $X\subseteq\alpha$ and $\Tr_m(X)$ is $\Pi^1_m$-indescribable, then $X=\bigcup_{\beta\in \Tr_m(X)}X\cap \beta$ must be $\Pi^1_m$-indescribable by Lemma~\ref{lemma_indescribable_union}.
\end{proof}

\begin{proposition}\label{proposition_n_square_denies_n_plus_1_indescribability}
For every $n<\omega$, $\square_n(\kappa)$ implies that $\kappa$ is not $\Pi^1_{n+1}$-indescribable.
\end{proposition}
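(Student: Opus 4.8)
The plan is to mimic the classical argument that $\square(\kappa)$ kills weak compactness, but now using the Hauser characterization of $\Pi^1_{n+1}$-indescribability (Theorem~\ref{theorem_hauser}) in place of the embedding characterization of weak compactness. Suppose $\vec C = \langle C_\alpha \st \alpha \in \Tr_{n-1}(\kappa)\rangle$ is a $\square_n(\kappa)$-sequence, and suppose toward a contradiction that $\kappa$ is $\Pi^1_{n+1}$-indescribable. First I would fix a $\kappa$-model $M$ with $\vec C \in M$ (and with $\Tr_{n-1}(\kappa) \in M$; note $\kappa$ is inaccessible since $\Pi^1_{n+1}$-indescribable cardinals are). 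By Theorem~\ref{theorem_hauser}(2), applied with $S = \kappa$, there is a $\Pi^1_n$-correct $\kappa$-model $N$ and an elementary embedding $j : M \to N$ with $\crit(j) = \kappa$. Write $j(\vec C) = \langle D_\alpha \st \alpha \in j(\Tr_{n-1}(\kappa))\rangle$ in $N$; since $\crit(j)=\kappa$ we have $D_\alpha = C_\alpha$ for all $\alpha \in \Tr_{n-1}(\kappa)$, and $\kappa \in j(\Tr_{n-1}(\kappa))^N$ because $\Tr_{n-1}(\kappa)$ is unbounded in $\kappa$ — here is where I must check that "$\kappa \in \Tr_{n-1}(\kappa)$ as computed in $N$'' holds, i.e.\ that $N$ thinks $\kappa$ is $\Pi^1_{n-1}$-indescribable. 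This is automatic: $\kappa$ is genuinely $\Pi^1_{n-1}$-indescribable (being $\Pi^1_{n+1}$-indescribable), the $\Pi^1_n$-correctness of $N$ means $N$ computes $\Pi^1_{n-1}$-indescribable subsets of $\kappa$ correctly (as noted in the paragraph about $\chi_{n-1}$ and $\psi_{n-1}$), so $\kappa \in \Tr_{n-1}(\kappa)$ holds in $N$ with the same meaning, using the $\Pi^1_{n-1}$-formula $\chi_{n-1}$.

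The key step is then to let $E := D_\kappa$, the $\kappa$-th element of $j(\vec C)$ as computed in $N$, and argue that $E$ is a thread through $\vec C$ in $V$, contradicting that $\vec C$ is a $\square_n(\kappa)$-sequence. By elementarity $N \models$ "$D_\kappa$ is an $n$-club subset of $\kappa$ and for all $\alpha \in j(\Tr_{n-1}(\kappa))$ with $\alpha < \kappa$, if $D_\kappa \cap \alpha \in \Pi^1_{n-1}(\alpha)^+$ then $D_\alpha = D_\kappa \cap \alpha$'' — this is just coherence clause (2) of Definition~\ref{definition_n_square} applied inside $N$ to the index $\kappa$ in $j(\Tr_{n-1}(\kappa))$. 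The point is to transfer each of these three assertions from $N$ to $V$. For "$D_\kappa$ is an $n$-club in $\kappa$'': by $\Pi^1_n$-correctness of $N$, using the $\Pi^1_n$-formula $\psi_n$, $N \models \psi_n(E)$ iff $V_\kappa \models \psi_n(E)$ iff $E$ is genuinely an $n$-club subset of $\kappa$. For the coherence: for each $\alpha \in \Tr_{n-1}(\kappa)$ with $E \cap \alpha \in \Pi^1_{n-1}(\alpha)^+$ in $V$, note $\alpha < \kappa$ so $V_\alpha \in N$ and $N$ computes $\Pi^1_{n-1}$-indescribability of subsets of $\alpha$ correctly (again via $\chi_{n-1}$, and $\Pi^1_n$-correctness implies correctness about rank-initial segments below $\kappa$), hence $N$ agrees $E \cap \alpha \in \Pi^1_{n-1}(\alpha)^+$, so $N$ concludes $D_\alpha = E \cap \alpha$; but $D_\alpha = C_\alpha$ since $\alpha < \crit(j)$, so $C_\alpha = E \cap \alpha$. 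That is exactly the statement that $E$ is a thread through $\vec C$.

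The main obstacle I anticipate is the bookkeeping around correctness: I need to be careful that $\Pi^1_n$-correctness of $N$ at $\kappa$ genuinely delivers (a) correctness about $n$-clubs in $\kappa$ (via $\psi_n$), (b) correctness about $\Pi^1_{n-1}$-indescribable subsets of $\kappa$ itself (via $\chi_{n-1}$, which is only $\Pi^1_n$), and (c) correctness about $\Pi^1_{n-1}$-indescribability of subsets of ordinals $\alpha < \kappa$, where one should observe that since $\alpha < \kappa$, the statement "$X \subseteq \alpha$ is $\Pi^1_{n-1}$-indescribable'' is absolute between $V$ and any transitive model containing $V_{\alpha+1}$, so $N$-correctness here is immediate from $V_\alpha \in N$ rather than needing the full $\Pi^1_n$-correctness machinery. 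One slightly delicate point: one must ensure that $j(\Tr_{n-1}(\kappa)) \cap \kappa = \Tr_{n-1}(\kappa)$, which follows since $\crit(j) = \kappa$ and $\Tr_{n-1}(\kappa) \in M$, together with $N$ computing this trace below $\kappa$ correctly. I would also remark at the end that the edge case $n = 0$ recovers the classical fact, with $\Pi^1_{-1}$-indescribable meaning "limit ordinal / unbounded'' and $0$-club meaning "club,'' so the single argument covers all $n < \omega$.
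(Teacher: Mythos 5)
Your proposal is correct and follows essentially the same route as the paper's proof: apply the Hauser characterization to get $j:M\to N$ with $N$ $\Pi^1_n$-correct, observe that the $\kappa$-th entry of $j(\vec{C})$ is a genuine $n$-club coherent with $\vec{C}$, and conclude it is a thread, a contradiction. Your extra bookkeeping about $\chi_{n-1}$, $\psi_n$, and absoluteness of indescribability below $\kappa$ just makes explicit what the paper leaves implicit.
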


\begin{proof}
Suppose $\vec{C}=\<C_\alpha\st\alpha\in\Tr_n(\kappa)\>$ is a $\square_n(\kappa)$-sequence and $\kappa$ is $\Pi^1_{n+1}$-indescribable. Let $M$ be a $\kappa$-model with $\vec{C}\in M$. Since $\kappa$ is $\Pi^1_{n+1}$-indescribable, we may let $j:M\to N$ be an elementary embedding with critical point $\kappa$ and a $\Pi^1_n$-correct $N$ as in Theorem \ref{theorem_hauser} (2). By elementarity, it follows that $j(\vec{C})=\<\bar{C}_\alpha\st\alpha\in\Tr_n^N(j(\kappa))\>$ is a $\square_n(j(\kappa))$-sequence in $N$. Since $N$ is $\Pi^1_n$-correct, we know that $\kappa \in \Tr_n^N(j(\kappa))$ and $\bar C_\kappa$ must also be $n$-club in $V$. Since $j(\vec{C})$ is a $\square_n(j(\kappa))$-sequence in $N$, it follows that for every $\Pi^1_n$-indescribable $\alpha<\kappa$ if $\bar{C}_\kappa\cap\alpha\in\Pi^1_n(\alpha)^+$, then $\bar{C}_\kappa\cap\alpha=C_\alpha$, and hence $\bar{C}_\kappa$ is a thread through $\vec{C}$, a contradiction.
\end{proof}

Let us now describe the sense in which $\square_n(\kappa)$ can hold trivially when $\kappa$ is $\Pi^1_n$-indescribable and certain reflection principles fail often below $\kappa$.

\begin{definition}\label{definition_n_square_holds_trivially}
Suppose $n<\omega$ and $\Tr_{n-1}(\kappa)$ is cofinal in $\kappa$. We say that \emph{$\square_n(\kappa)$ holds trivially} if there is a $\square_n(\kappa)$-sequence $\vec{C}=\<C_\alpha\st\alpha\in\Tr_{n-1}(\kappa)\>$ and a club $E \subseteq \kappa$ such that for all $\alpha\in\Tr_{n-1}(\kappa) \cap E$, $C_\alpha$ is trivially an $n$-club subset of $\alpha$ in the sense that $C_\alpha$ is a $\Pi^1_{n-1}$-indescribable subset of $\alpha$ and has no $\Pi^1_{n-1}$-indescribable proper initial segment.
\end{definition}

Notice that $\square(\kappa)$ holds trivially if $\cf(\kappa) = \omega_1$. In this case we can find a club $E \subseteq \kappa$
consisting of ordinals of countable cofinality, namely, let $\langle \alpha_\xi:\xi<\omega_1\rangle$ be an increasing continuous cofinal sequence in $\kappa$, and let $E$ consist of $\alpha_\xi$ for $\xi$ a limit ordinal. For all $\alpha \in E$, we can let $C_\alpha$ be a cofinal subset of $\alpha$ of order type $\omega$. Then, for every limit ordinal $\beta \in \kappa \setminus E$, we can let $\alpha_\beta = \max(E \cap \beta)$
and set $C_\beta$ to be the interval $(\alpha_\beta, \beta)$. It is easily verified that a sequence thus defined is a $\square(\kappa)$-sequence.

Recall that the principle $\Refl_n(\kappa)$ holds if and only if $\kappa$ is $\Pi^1_n$-indescribable and
for every $\Pi^1_n$-indescribable subset $X$ of $\kappa$, there is an $\alpha<\kappa$ such that $X\cap \alpha$ is $\Pi^1_n$-indescribable (see \cite{MR3985624} and \cite{MR4050036} for more details).

\begin{proposition}\label{proposition_n_square_can_hold_trivially}
Suppose $1\leq n<\omega$ and $\kappa$ is $\Pi^1_n$-indescribable.
Then $\square_n(\kappa)$ holds trivially if and only if there is a
club $E \subseteq \kappa$ such that  $\lnot\Refl_{n-1}(\alpha)$ holds
for every $\alpha\in\Tr_{n-1}(\kappa) \cap E$.
\end{proposition}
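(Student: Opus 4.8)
The plan is to establish the two directions separately. The left-to-right implication is immediate: if $\square_n(\kappa)$ holds trivially, witnessed by a $\square_n(\kappa)$-sequence $\vec C=\langle C_\alpha:\alpha\in\Tr_{n-1}(\kappa)\rangle$ and a club $E\subseteq\kappa$, then for each $\alpha\in\Tr_{n-1}(\kappa)\cap E$ the ordinal $\alpha$ is $\Pi^1_{n-1}$-indescribable (that is exactly what $\alpha\in\Tr_{n-1}(\kappa)$ asserts), while $C_\alpha$ is, by hypothesis, a $\Pi^1_{n-1}$-indescribable subset of $\alpha$ with no $\Pi^1_{n-1}$-indescribable proper initial segment; thus $C_\alpha$ witnesses $\lnot\Refl_{n-1}(\alpha)$, and the very same $E$ works.

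For the converse, fix a club $E\subseteq\kappa$ with $\lnot\Refl_{n-1}(\alpha)$ for all $\alpha\in\Tr_{n-1}(\kappa)\cap E$. (Note that $\Tr_{n-1}(\kappa)$ is an $n$-club by Corollary~\ref{cor_trace_n-1_indescribable}, so $\square_n(\kappa)$ is meaningful.) I would build $\vec C=\langle C_\alpha:\alpha\in\Tr_{n-1}(\kappa)\rangle$ in direct analogy with the trivial $\square(\kappa)$-sequence described after Definition~\ref{definition_n_square_holds_trivially}. For $\alpha\in\Tr_{n-1}(\kappa)\cap E$, use $\lnot\Refl_{n-1}(\alpha)$ (and that $\alpha$ is $\Pi^1_{n-1}$-indescribable) to choose a $\Pi^1_{n-1}$-indescribable set $C_\alpha\subseteq\alpha$ having no $\Pi^1_{n-1}$-indescribable proper initial segment; the one small point is that such a $C_\alpha$ is automatically an $n$-club in $\alpha$, because its $n$-closure requirement ---  that $C_\alpha\cap\delta\in\Pi^1_{n-1}(\delta)^+$ implies $\delta\in C_\alpha$ --- holds vacuously. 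For $\alpha\in\Tr_{n-1}(\kappa)\setminus E$, set $\gamma_\alpha=\sup(E\cap\alpha)<\alpha$ (using that $E$ is closed and $\alpha\notin E$) and let $C_\alpha=\alpha\setminus(\gamma_\alpha+1)$; this tail is $\Pi^1_{n-1}$-indescribable in $\alpha$ since its complement in $\alpha$ is bounded, and it is easily checked to be $n$-closed, so it too is an $n$-club.

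I would then verify the three clauses. \emph{Coherence}: if $\alpha<\beta$ in $\Tr_{n-1}(\kappa)$ and $C_\beta\cap\alpha\in\Pi^1_{n-1}(\alpha)^+$, then $\beta\notin E$ (else $C_\beta$ has no $\Pi^1_{n-1}$-indescribable proper initial segment), so $C_\beta=\beta\setminus(\gamma_\beta+1)$; the hypothesis forces $\gamma_\beta<\alpha$, whence $\alpha\notin E$ and $E\cap\alpha=E\cap\beta$, so $\gamma_\alpha=\gamma_\beta$ and $C_\alpha=\alpha\setminus(\gamma_\alpha+1)=C_\beta\cap\alpha$. The trivial-witness clause holds with $E$ itself. \emph{No thread} (the crux): suppose $C$ were a thread. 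As $C$ is $n$-club it is $\Pi^1_n$-indescribable, so Corollary~\ref{cor_trace_n-1_indescribable} applied twice gives that $\Tr_{n-1}(C)$ and then $\Tr_{n-1}(\Tr_{n-1}(C))$ are $n$-clubs, hence stationary; pick $\alpha\in\Tr_{n-1}(\Tr_{n-1}(C))\cap E$. Then $\Tr_{n-1}(C)\cap\alpha$ is $\Pi^1_{n-1}$-indescribable in $\alpha$ --- hence nonempty --- and, by the argument in the proof of Corollary~\ref{cor_trace_n-1_indescribable} (via Lemma~\ref{lemma_indescribable_union}), also $C\cap\alpha\in\Pi^1_{n-1}(\alpha)^+$, i.e.\ $\alpha\in\Tr_{n-1}(C)$. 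The thread property now gives $C_\alpha=C\cap\alpha$, but $\alpha\in E$ means $C_\alpha$ has no $\Pi^1_{n-1}$-indescribable proper initial segment, i.e.\ $\Tr_{n-1}(C)\cap\alpha=\emptyset$, contradicting its nonemptiness.

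The step I expect to be the main obstacle is this no-thread argument: one must extract from a putative thread $C$ a single $\alpha\in E$ at which $C\cap\alpha$ is simultaneously a nontrivial (i.e.\ $\Pi^1_{n-1}$-indescribable) segment of $C$ \emph{and} possesses a $\Pi^1_{n-1}$-indescribable proper initial segment. Taking $\alpha\in\Tr_{n-1}(C)\cap E$ does not suffice; passing to the double trace $\Tr_{n-1}(\Tr_{n-1}(C))$ and using the absorption $\Tr_{n-1}(\Tr_{n-1}(C))\subseteq\Tr_{n-1}(C)$ is exactly what delivers both properties at the same ordinal. The remaining bookkeeping is routine once one observes that the $n$-closure clause of ``$n$-club'' is trivially met by any set without a $\Pi^1_{n-1}$-indescribable proper initial segment.
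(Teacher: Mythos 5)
Your proof is correct and takes essentially the paper's approach: the same construction (non-reflecting witnesses $C_\alpha$ at points of $E\cap\Tr_{n-1}(\kappa)$, tail intervals elsewhere, with the same coherence and triviality checks) and the same appeal to Corollary~\ref{cor_trace_n-1_indescribable} to rule out a thread. The only difference is the final contradiction, where the paper avoids your double-trace step by simply choosing two points $\alpha<\beta$ in $\Tr_{n-1}(C)\cap E$, so that $C_\beta\cap\alpha=C\cap\alpha=C_\alpha$ is a $\Pi^1_{n-1}$-indescribable proper initial segment of $C_\beta$, contradicting the triviality of $C_\beta$.
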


\begin{proof}
If $\square_n(\kappa)$ holds trivially, then there is a $\square_n(\kappa)$-sequence $\vec{C}=\<C_\alpha\st\alpha\in\Tr_{n-1}(\kappa)\>$ and a
club $E \subseteq \kappa$ such that for $\alpha\in\Tr_{n-1}(\kappa) \cap E$, $C_\alpha$ is a $\Pi^1_{n-1}$-indescribable set with no $\Pi^1_{n-1}$-indescribable initial segment, in which case $C_\alpha$ is a witness to the fact that $\Refl_{n-1}(\alpha)$ fails.

Conversely, suppose that $E \subseteq \kappa$ is a club and $\lnot\Refl_{n-1}(\alpha)$ holds for every $\alpha\in\Tr_{n-1}(\kappa) \cap E$.
For each $\alpha\in \Tr_{n-1}(\kappa) \cap E$, let $C_\alpha$ be a $\Pi^1_{n-1}$-indescribable subset of $\alpha$ which has no $\Pi^1_{n-1}$-indescribable proper initial segment. Then each $C_\alpha$ is trivially $n$-club in $\alpha$. For all $\beta \in \Tr_{n-1}(\kappa) \setminus E$, let
$\alpha_\beta = \max(E \cap \beta)$, and let $C_\beta$ be the interval $(\alpha_\beta, \beta)$. Then $\vec{C}=\<C_\alpha\st\alpha\in \Tr_{n-1}(\kappa)\>$ is easily
seen to be a coherent sequence of $n$-clubs, since there are no points at which coherence needs to be checked for indices in $E$ and
coherence is easily checked for indices outside of $E$ because of the uniformity of the definition. We must argue that $\vec{C}$ has no thread. Suppose there is a thread $C\subseteq\kappa$ through $\vec{C}$. Since $\kappa$ is $\Pi^1_n$-indescribable and $C$ is an $n$-club subset of $\kappa$ it follows, by Corollary~\ref{cor_trace_n-1_indescribable}, that $\Tr_{n-1}(C)$ is an $n$-club in $\kappa$. Thus we can choose $\alpha,\beta\in \Tr_{n-1}(C) \cap E$ with $\alpha<\beta$. Since $C$ is a thread we have $C_\alpha=C_\beta\cap\alpha=C\cap\alpha$, which contradicts the fact that $C_\beta$ has no $\Pi^1_{n-1}$-indescribable proper initial segment. This shows that $\square_n(\kappa)$ holds trivially.\end{proof}

\begin{remark}
	It seems like it might be more optimal to change Definition~\ref{definition_n_square_holds_trivially}
	to instead say that $\square_n(\kappa)$ holds trivially if there is
	a $\square_n(\kappa)$-sequence $\vec{C}$ and an $n$-\emph{club} $E \subseteq \kappa$ such that for all
	$\alpha \in \Tr_{n-1}(\kappa) \cap E$, $C_\alpha$ is trivially an $n$-club subset of $\alpha$.
	However, we were not able to prove the analogue of Proposition~\ref{proposition_n_square_can_hold_trivially}
	corresponding to this alternative definition, namely that $\square_n(\kappa)$ holds trivially if and
	only if there is an $n$-\emph{club} $E \subseteq \kappa$ such that $\lnot\Refl_{n-1}(\alpha)$
	holds for every $\alpha \in \Tr_{n-1}(\kappa) \cap E$.
\end{remark}


\begin{corollary}
In $L$, if $\kappa$ is the least $\Pi^1_n$-indescribable cardinal, then $\square_n(\kappa)$ holds trivially.
\end{corollary}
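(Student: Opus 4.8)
The plan is to read this off Proposition~\ref{proposition_n_square_can_hold_trivially} (so we are in the case $1 \le n < \omega$). Let $\kappa$ be the least $\Pi^1_n$-indescribable cardinal. Since $\kappa$ is $\Pi^1_n$-indescribable, $\Tr_{n-1}(\kappa)$ is an $n$-club in $\kappa$ by Corollary~\ref{cor_trace_n-1_indescribable}, hence cofinal, so Proposition~\ref{proposition_n_square_can_hold_trivially} applies. By that proposition it is enough to exhibit a club $E \subseteq \kappa$ with $\lnot\Refl_{n-1}(\alpha)$ for every $\alpha \in \Tr_{n-1}(\kappa) \cap E$, and I claim that $E = \kappa$ works.

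So let $\alpha \in \Tr_{n-1}(\kappa)$; that is, $\alpha < \kappa$ is $\Pi^1_{n-1}$-indescribable. Because $\kappa$ is the \emph{least} $\Pi^1_n$-indescribable cardinal, $\alpha$ is not $\Pi^1_n$-indescribable, so the claim reduces to the following fact about $L$: a $\Pi^1_{n-1}$-indescribable cardinal $\alpha$ that is not $\Pi^1_n$-indescribable fails $\Refl_{n-1}(\alpha)$, i.e.\ it carries a $\Pi^1_{n-1}$-indescribable subset with no $\Pi^1_{n-1}$-indescribable proper initial segment. For $n = 1$ this is Jensen's classical theorem that in $L$ every inaccessible non-weakly compact cardinal carries a non-reflecting stationary set; for $n = 2$ it is the contrapositive of the fact recorded in the introduction that, in $L$, a weakly compact cardinal $\alpha$ satisfies $\Refl_1(\alpha)$ if and only if $\alpha$ is $\Pi^1_2$-indescribable; and for arbitrary $n$ it is the corresponding $L$-theoretic equivalence between $\Pi^1_n$-indescribability of $\alpha$ and reflection of the $\Pi^1_{n-1}$-indescribable subsets of $\alpha$, obtainable by Jensen's methods relative to the $\Pi^1_{n-1}$-indescribability filter (cf.\ \cite{BrickhillWelch}). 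Granting this, $\lnot\Refl_{n-1}(\alpha)$ holds for every $\alpha \in \Tr_{n-1}(\kappa)$, so $E = \kappa$ is as desired and Proposition~\ref{proposition_n_square_can_hold_trivially} gives that $\square_n(\kappa)$ holds trivially.

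The main obstacle is precisely the fine-structural fact invoked above. A self-contained proof of it would use Hauser's characterization (Theorem~\ref{theorem_hauser}) to fix a set $A \subseteq V_\alpha$ and a $\Pi^1_n$-sentence witnessing that $\alpha$ is not $\Pi^1_n$-indescribable, so that no $\beta < \alpha$ yields the corresponding reflecting $\Pi^1_{n-1}$-instance; then, working in $L$ and invoking $\diamondsuit$ on $\alpha$ together with condensation for initial segments of $L$ closed under the relevant Skolem functions, one constructs a $\Pi^1_{n-1}$-indescribable set $S \subseteq \alpha$ concentrating on $\Pi^1_{n-1}$-indescribable ordinals $\beta$ at which $A \cap V_\beta$ already codes the failure of reflection, arranged so that $\Pi^1_{n-1}$-indescribability of $S \cap \gamma$ for some $\gamma < \alpha$ would supply the forbidden reflecting instance at $\gamma$. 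Everything else is a routine unwinding of Definition~\ref{definition_n_square_holds_trivially} and a direct appeal to Proposition~\ref{proposition_n_square_can_hold_trivially}.
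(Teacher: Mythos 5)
Your proposal is correct and follows essentially the same route as the paper: reduce to Proposition~\ref{proposition_n_square_can_hold_trivially} with the trivial club $E=\kappa$, using that in $L$ a cardinal $\alpha$ is $\Pi^1_n$-indescribable if and only if $\Refl_{n-1}(\alpha)$ holds, so minimality of $\kappa$ gives $\lnot\Refl_{n-1}(\alpha)$ for all $\alpha<\kappa$. The general-$n$ equivalence you hedge about (``obtainable by Jensen's methods'') is exactly the Bagaria--Magidor--Sakai theorem \cite{MR3416912} that the paper cites, so no fine-structural construction of your own is needed.
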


\begin{proof}
Generalizing a result of Jensen \cite{MR0309729}, Bagaria, Magidor and Sakai proved \cite{MR3416912} that in $L$ a cardinal $\kappa$ is $\Pi^1_n$-indescribable if and only if $\Refl_{n-1}(\kappa)$ holds. Suppose $V=L$ and $\kappa$ is the least $\Pi^1_n$-indescribable cardinal. Then $\Refl_{n-1}(\alpha)$ fails for all $\alpha<\kappa$. Hence by Proposition \ref{proposition_n_square_can_hold_trivially}, $\square_n(\kappa)$ holds trivially.
\end{proof}

Brickhill and Welch showed more generally that in $L$, if $\kappa$ is $\Pi^1_n$-indescribable and not $\Pi^1_{n+1}$-indescribable, then their principle $\square^n(\kappa)$ holds. Since $L$ can have $\Pi^1_n$-indescribable, but not $\Pi^1_{n+1}$-indescribable, cardinals below which, for instance, the set of $\Pi^1_{n-1}$-indescribable cardinals is $\Pi^1_n$-indescribable, it follows from reasonable assumptions that in $L$ our principle $\square_n(\kappa)$ can hold nontrivially at a $\Pi^1_n$-indescribable cardinal. We do not know how to force $\square_n(\kappa)$ to hold non-trivially at a $\Pi^1_n$-indescribable cardinal.

Another consequence of Proposition \ref{proposition_n_square_can_hold_trivially} is that we can force $\square_1(\kappa)$ to hold \emph{trivially} at a $\Pi^1_1$-indescribable cardinal by killing certain stationary reflection principles below $\kappa$.

Recall that a partial order $\P$ is said to be $\alpha$-\emph{strategically closed}, for an ordinal $\alpha$, if Player II has a winning strategy in the following two-player game $\mathcal {G}_\alpha(\P)$ of perfect information. In a run of $\mathcal{G}_\alpha(\P)$, the two players take turns playing elements of
a decreasing sequence $\langle p_\beta \st \beta < \alpha \rangle$ of conditions from $\P$. Player I plays at all odd ordinal stages, and Player II plays at all even ordinal stages (in particular, at limits). Player II goes first and must play $\one_{\P}$. Player I wins if there is a limit ordinal $\gamma < \alpha$
such that $\langle p_\beta \st \beta < \gamma \rangle$ has no lower bound (i.e., if Player II is unable to play at stage $\gamma$).
If the game continues successfully for $\alpha$-many moves, then Player II wins. Clearly, for a cardinal $\alpha$, if $\P$ is $\alpha$-strategically closed, then $\P$ is $\lt\alpha$-distributive, and hence adds no new $\alpha$-sequences of ground model sets.

We will use the following general proposition about indestructibility of weakly compact cardinals.
\begin{definition}
Suppose $\kappa$ is an inaccessible cardinal. We say that a forcing iteration $$\<\P_\alpha,\dot{\Q}_\beta\st \alpha\leq\kappa, ~ \beta<\kappa\>$$ is \emph{good} if it has Easton support and, for all
$\alpha < \kappa$, if $\alpha$ is inaccessible, then $\dot{\Q}_\alpha$ is a $\P_\alpha$-name for a poset such that $\one_{\P_\alpha}\forces \dot \Q_\alpha\in \dot V_\kappa$,
where $\dot{V}_\kappa$ is a $\P_\alpha$-name for $(V_\kappa)^{V^{\P_\alpha}}$ and, otherwise, $\dot{\Q}_\alpha$ is a $\P_\alpha$-name for trivial forcing.

\end{definition}
If $\P_\kappa$ is a good iteration, then we can argue by induction on $\alpha$ that every $\P_\alpha\in V_\kappa$ because if $\P_\alpha\in V_\kappa$ and $\one_{\P_\alpha}\forces \dot \Q_\alpha\in \dot V_\kappa$, then $\P_\alpha*\dot{\Q}_\alpha\in V_\kappa$. The following standard proposition about good iterations can be found, for example, in \cite{MR2768691}.
\begin{proposition}\label{prop_goodIterations}
Suppose $\kappa$ is a Mahlo cardinal. Then a good iteration $\P_\kappa$ has size $\kappa$ and is $\kappa$-c.c.
\end{proposition}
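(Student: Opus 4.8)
For the size bound, I would show by induction on $\alpha\le\kappa$ that $|\P_\alpha|<\kappa$ for all $\alpha<\kappa$, whence $|\P_\kappa|\le\kappa$ (and $|\P_\kappa|=\kappa$ in the non-degenerate case). At a successor stage $\alpha+1$ with $\alpha$ inaccessible, $\one_{\P_\alpha}\forces\dot\Q_\alpha\in\dot V_\kappa$ together with $|\P_\alpha|<\kappa$ and the inaccessibility of $\kappa$ forces $\dot\Q_\alpha$ to have size $<\kappa$, so $\dot\Q_\alpha$ may be replaced by a nice name of size $<\kappa$ and $|\P_{\alpha+1}|=|\P_\alpha*\dot\Q_\alpha|<\kappa$; at non-inaccessible $\alpha$ we have $\P_{\alpha+1}=\P_\alpha$. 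At a limit $\alpha<\kappa$ that is a regular cardinal (in particular at inaccessible $\alpha$), Easton support implies every condition lies in some $\P_\beta$ with $\beta<\alpha$, so $\P_\alpha=\bigcup_{\beta<\alpha}\P_\beta$ and $|\P_\alpha|<\kappa$; at other limit stages the inverse limit has size at most $\bigl(\sup_{\beta<\alpha}|\P_\beta|\bigr)^{|\alpha|}<\kappa$. Both bounds use the inaccessibility of $\kappa$. The same observation at $\kappa$ gives $\P_\kappa=\bigcup_{\alpha<\kappa}\P_\alpha$, so $|\P_\kappa|\le\kappa$.

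For the $\kappa$-c.c., suppose not, fix an antichain of size $\kappa$, and extend it to a maximal antichain $B$; by the size bound $|B|=\kappa$. Enumerate $B=\{p_\xi:\xi<\kappa\}$, let $\gamma_\xi<\kappa$ be least with $p_\xi\in\P_{\gamma_\xi}$, and for each $r\in\P_\kappa$ fix, using maximality of $B$, some $\xi(r)<\kappa$ with $p_{\xi(r)}$ compatible with $r$. Then $h(\beta)=\sup\{\gamma_{\xi(r)}+1:r\in\P_\beta\}$ is below $\kappa$ for each $\beta<\kappa$, since $|\P_\beta|<\kappa$ and $\kappa$ is regular, so the set $C$ of $\alpha<\kappa$ closed under $h$ is a club. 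Since $\kappa$ is Mahlo, choose an inaccessible $\alpha\in C$. As $\alpha$ is a regular cardinal, $\P_\alpha=\bigcup_{\beta<\alpha}\P_\beta$; combined with the closure of $\alpha$ under $h$, this shows that $B\cap\P_\alpha$ is predense, hence (being a predense antichain) a maximal antichain, in $\P_\alpha$. Since $|\P_\alpha|<\kappa=|B|$, there is $p_\eta\in B\setminus\P_\alpha$, and then $p_\eta\restrict\alpha\in\P_\alpha$ is compatible in $\P_\alpha$ with some $p_\xi\in B\cap\P_\alpha$, say via $s\in\P_\alpha$ with $s\le p_\eta\restrict\alpha$ and $s\le p_\xi$. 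Defining $t\in\P_\kappa$ by $t\restrict\alpha=s$ and $t(\delta)=p_\eta(\delta)$ for $\delta\ge\alpha$, one checks that $t$ has Easton support, $t\le p_\eta$, and $t\le p_\xi$ — the last because $p_\xi\in\P_\alpha$ has trivial coordinates above $\alpha$, so no clash arises there. Thus $p_\xi$ and $p_\eta$ are distinct, compatible elements of $B$, contradicting that $B$ is an antichain.

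The point needing care is the choice of the reflection stage. It is tempting to argue instead that $\P_\alpha$ has the $\alpha$-c.c. for the chosen inaccessible $\alpha$ and to contradict the fact that $\{p_\xi:\xi<\alpha\}$ is an $\alpha$-sized antichain of $\P_\alpha$; but this fails, since goodness only requires $\dot\Q_\beta\in\dot V_\kappa$ rather than $\dot\Q_\beta\in\dot V_\beta$, so a partial iteration $\P_\alpha$ may well carry antichains of size $\geq\alpha$. The argument above sidesteps this by amalgamating a condition $p_\eta$ reaching above $\alpha$ with one, $p_\xi$, living entirely below $\alpha$, so that compatibility above $\alpha$ is automatic; and it is precisely to guarantee that the club $C$ contains a stage $\alpha$ at which $\P_\alpha$ is a direct limit — equivalently, a regular cardinal, here realized as an inaccessible — that one uses Mahloness of $\kappa$ rather than mere inaccessibility.
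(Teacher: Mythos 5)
Your proof is correct. Note, though, that the paper does not prove this proposition at all: it is stated as standard and outsourced to Cummings' Handbook chapter \cite{MR2768691}, where the usual argument runs via pressing down. That standard proof takes a purported antichain $\{p_\xi \st \xi<\kappa\}$, uses the Mahloness of $\kappa$ and the Easton supports to apply Fodor at inaccessible indices (so that $\supp(p_\xi)\cap\xi$ is bounded below a fixed $\beta$ on a stationary set), then uses $|\P_\beta|<\kappa$ to find two conditions with the same restriction to $\beta$ and non-overlapping supports above $\beta$, and amalgamates them. Your argument is the other classical route: reflect a maximal antichain $B$ to an inaccessible closure point $\alpha$ of the function $h$, observe that $B\cap\P_\alpha$ is predense in $\P_\alpha$ because $\P_\alpha$ is a direct limit there, and then amalgamate a condition $p_\eta\in B\setminus\P_\alpha$ with a member of $B\cap\P_\alpha$ compatible with $p_\eta\restrict\alpha$. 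Both proofs use Mahloness in the same essential way (to hit an inaccessible, hence direct-limit, stage inside a club), and your amalgamation step, the verification that $t\le p_\xi$ is unproblematic above $\alpha$ because $p_\xi$ is trivial there, is exactly the point where the argument closes. Your closing remark is also on target: since goodness only demands $\dot\Q_\beta\in\dot V_\kappa$ rather than $\dot\Q_\beta\in\dot V_\beta$, the intermediate posets $\P_\alpha$ need not satisfy any local chain condition, so one cannot shortcut the argument by claiming $\P_\alpha$ is $\alpha$-c.c.; the cardinality bound $|\P_\alpha|<\kappa$, which your induction establishes correctly, is what both proofs actually rely on.
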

\begin{lemma}\label{lemma_indestructibleWC}
Suppose $\kappa$ is weakly compact and $\<\P_\alpha,\dot{\Q}_\beta\st \alpha\leq\kappa, ~ \beta<\kappa\>$ is a good iteration which at non-trivial stages $\alpha$ has $\one_{\P_\alpha}\forces ``\dot \Q_\alpha\text{ is }\alpha\text{-strategically closed}$", and let $G$ be $\P$-generic over $V$. Then $\kappa$ remains weakly compact in $V[G]$.
\end{lemma}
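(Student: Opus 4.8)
The plan is to carry out the standard master-condition-free lifting argument for preserving weak compactness through an iteration. Write $\P = \P_\kappa$. Since $\kappa$ is weakly compact it is Mahlo, so Proposition~\ref{prop_goodIterations} applies and $\P$ has size $\kappa$ and is $\kappa$-c.c.; in particular $\kappa$ remains inaccessible in $V[G]$. To see that $\kappa$ stays weakly compact, I would verify clause~(3) of the Hauser characterization (Theorem~\ref{theorem_hauser}) with $n = 1$ and $S = \kappa$ in the model $V[G]$: given any $A \subseteq \kappa$ in $V[G]$, one must produce a $\kappa$-model $M'$ of $V[G]$ with $A,\kappa \in M'$ admitting an elementary embedding $j' : M' \to N'$ with $\crit(j') = \kappa$ into a transitive (hence automatically $\Pi^1_0$-correct) $\kappa$-model. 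By the $\kappa$-c.c.\ and $|\P| = \kappa$, fix in $V$ a nice $\P$-name $\dot A \in H_{\kappa^+}$ for $A$. Working in $V$, use the weak compactness of $\kappa$ to choose a $\kappa$-model $M$ containing $\dot A$, $\P$, and the whole iteration $\langle \P_\alpha, \dot\Q_\beta : \alpha \le \kappa,\ \beta < \kappa \rangle$ as elements, together with an elementary embedding $j : M \to N$ with $\crit(j) = \kappa$ and $N$ a $\kappa$-model. Since $\kappa$ is inaccessible, $V_\kappa \subseteq N$, so $\P \in N$ and $j \restrict \P = \id$, and by $\Pi^1_0$-correctness $\kappa$ is inaccessible in $N$.

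Next I would lift $j$ to the extension. In $N$, $j(\P)$ is an Easton-support iteration of length $j(\kappa)$ extending $\P$, so it factors as $\P * \dot\P_{\tail}$ where $\dot\P_{\tail}$ names the portion of the iteration from stage $\kappa$ to stage $j(\kappa)$. As $N \subseteq V$, the generic $G$ is also $\P$-generic over $N$. The crux is to build, inside $V[G]$, a filter $G_{\tail}$ that is $\dot\P_{\tail}^G$-generic over $N[G]$. Two facts make this possible: first, $N[G]$ is a $\kappa$-model of $V[G]$ — it has size $\kappa$, and using that $N$ is ${<}\kappa$-closed in $V$ together with the $\kappa$-c.c.\ and size $\kappa$ of $\P$, one checks that $N[G]$ is closed under ${<}\kappa$-sequences in $V[G]$; second, in $N[G]$ the tail forcing $\dot\P_{\tail}^G$ is $\kappa$-strategically closed, since it is an Easton-support iteration whose stages $\dot\Q_\alpha$ for $\kappa \le \alpha < j(\kappa)$ are, by elementarity, $\alpha$-strategically closed (hence $\kappa$-strategically closed) at the inaccessible $\alpha$ and trivial otherwise, and the stage-$\kappa$ iterand is $\kappa$-strategically closed because $\kappa$ is inaccessible in $N$. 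Now work in $V[G]$: since $|N[G]| = \kappa$, list all dense subsets of $\dot\P_{\tail}^G$ lying in $N[G]$ as $\langle D_\xi : \xi < \kappa\rangle$, fix a winning strategy $\sigma \in N[G]$ for Player~II in the length-$\kappa$ strategic-closure game, and run the game so that Player~I at stage $2\xi+1$ steps into $D_\xi$ below the current condition while Player~II follows $\sigma$. Because every partial play of length $< \kappa$ lies in $N[G]$ by ${<}\kappa$-closure, $\sigma$ never fails at a limit $< \kappa$, so the run has length $\kappa$ and the conditions played generate an $N[G]$-generic filter $G_{\tail}$.

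Finally, $H := G * G_{\tail}$ is $j(\P)$-generic over $N$, and $j \image G = G \subseteq H$ since $j \restrict \P = \id$, so the embedding lifts to an elementary $\hat\jmath : M[G] \to N[H]$ with $\crit(\hat\jmath) = \kappa$ and $\hat\jmath(\dot A^G) = j(\dot A)^H$. Here $M[G]$ is a $\kappa$-model of $V[G]$ (by the same closure argument as for $N[G]$) with $A = \dot A^G$ and $\kappa$ as elements, and $N[H]$ is a transitive $\kappa$-model of $V[G]$; since $A$ was arbitrary, Theorem~\ref{theorem_hauser} yields that $\kappa$ is $\Pi^1_1$-indescribable, i.e.\ weakly compact, in $V[G]$.

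I expect the main obstacle to be the construction of $G_{\tail}$: one must be sure that the tail forcing is $\kappa$-strategically closed as computed in $N[G]$ (not merely in $N$), that $N[G]$ genuinely remains ${<}\kappa$-closed in $V[G]$, and that Player~II's strategy, chosen inside $N[G]$, stays winning when the game is played out in the larger model $V[G]$ — all three hinge on the $\kappa$-c.c.\ and size-$\kappa$ of $\P$ supplied by Proposition~\ref{prop_goodIterations}. The remainder is routine bookkeeping with the lifting criterion.
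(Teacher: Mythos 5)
Your proposal is correct and follows essentially the same route as the paper's proof: factor $j(\P_\kappa)$ over $\P_\kappa$, use the $\kappa$-c.c.\ and the generic closure criterion to see that $N[G]$ is a $\kappa$-model in $V[G]$, diagonalize against the $\kappa$-many dense sets in $N[G]$ using the $\kappa$-strategic closure of the tail to build a generic in $V[G]$, and lift $j$ to $M[G]$. The only cosmetic differences are that you phrase the conclusion via clause (3) of the Hauser characterization and treat the tail (stage $\kappa$ together with the later stages) as a single piece, where the paper splits off $\dot\Q_\kappa$ explicitly.
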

\begin{proof}
By Proposition~\ref{prop_goodIterations}, we can assume without loss that $\P\subseteq V_\kappa$. Since $\kappa$ is weakly compact, there are $\kappa$-models $M$ and $N$ with $A\in M$ for which there is an elementary embedding $j:M\to N$ with critical point $\kappa$. A nice-name counting argument, using the $\kappa$-c.c. and the fact that the tails of the forcing iteration are eventually $\alpha$-distributive for every $\alpha<\kappa$, shows that $\kappa$ is inaccessible in $V[G]$.

Suppose $A\in P(\kappa)^{V[G]}$ and let $\dot{A}\in H(\kappa^+)^V$ be a $\P_\kappa$-name such that $\dot{A}_{G}=A$. Let $M$ be a $\kappa$-model with $\dot{A}\in M$ for which there are a
$\kappa$-model $N$ and an elementary embedding $j:M\to N$ with critical point $\kappa$. Since $N^{<\kappa}\cap V\subseteq N$, we have $j(\P_\kappa)\cong \P_\kappa*\dot\Q_\kappa*\dot{\P}_{\kappa,j(\kappa)}$, where $N$ believes that $\one_{\P_\kappa}\forces ``\dot \Q_\kappa$ is $\kappa$-strategically closed", and $\dot{\P}_{\kappa,j(\kappa)}$ is a $\P_\kappa*\dot\Q_\kappa$-name for $N$'s version of the tail of the iteration $j(\P_\kappa)$ of length $j(\kappa)$. By the generic closure criterion (Lemma~\ref{lemma_genericClosureCriterion}), since $\P_\kappa$ has the $\kappa$-c.c., $N[G]$ is a $\kappa$-model in $V[G]$. The poset $(\dot{\Q}_\kappa*\dot{\P}_{\kappa,j(\kappa)})_{G}$ is $\kappa$-strategically closed in $N[G]$, so, by diagonalizing, we can build an $N[G]$-generic filter $H*G'\in V[G]$ for $(\dot{\Q}_\kappa*\dot{\P}_{\kappa,j(\kappa)})_{G}$. Since conditions in $\P_\kappa$ have supports of size less than the critical point of $j$ we have $j\image G\subseteq \hat{G}=_{\defn}G*H*G'$. Thus $j$ lifts to $j:M[G]\to N[\hat{G}]$. Since $A=\dot{A}_{G}\in M[G]$, this shows that $\kappa$ remains weakly compact in $V[G]$.
\end{proof}

\begin{proposition}\label{theorem_n_square_can_be_trivially_forced}
If $\kappa$ is $\Pi^1_1$-indescribable (weakly compact), then there is a forcing extension in which $\square_1(\kappa)$ holds trivially and $\kappa$ remains $\Pi^1_1$-indescribable.
\end{proposition}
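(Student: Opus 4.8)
The plan is to reduce the statement to Proposition~\ref{proposition_n_square_can_hold_trivially}. Specializing that result to $n=1$, $\square_1(\kappa)$ holds trivially if and only if there is a club $E\subseteq\kappa$ such that $\lnot\Refl_0(\alpha)$ holds for every $\alpha\in\Tr_0(\kappa)\cap E$. Since $\Refl_0(\alpha)$ asserts that $\alpha$ is inaccessible and every stationary subset of $\alpha$ has a stationary initial segment (necessarily at an inaccessible), $\lnot\Refl_0(\alpha)$ holds at an inaccessible $\alpha$ as soon as $\alpha$ carries a non-reflecting stationary set. So it suffices to produce a forcing extension $V[G]$ in which $\kappa$ is still weakly compact and \emph{every} $\alpha\in\Tr_0(\kappa)^{V[G]}$ carries a non-reflecting stationary set; then $E=\kappa$ witnesses the right-hand side of Proposition~\ref{proposition_n_square_can_hold_trivially}, and since $\kappa$ remains $\Pi^1_1$-indescribable, that proposition yields that $\square_1(\kappa)$ holds trivially in $V[G]$.

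To construct $V[G]$, I would let $\P_\kappa=\<\P_\alpha,\dot\Q_\beta\st\alpha\le\kappa,\ \beta<\kappa\>$ be the Easton-support iteration in which $\dot\Q_\alpha$ is a $\P_\alpha$-name for the standard poset $\mathbb S(\alpha)$ adding a non-reflecting stationary subset of $\alpha$ (concentrating, say, on the ordinals of cofinality $\omega$) whenever $\alpha$ is inaccessible in $V^{\P_\alpha}$, and for trivial forcing otherwise. The poset $\mathbb S(\alpha)$ has size $\alpha$ and, via the usual strategy in which Player~II keeps the characteristic function equal to $0$ on a club of ``checkpoints'', is $\alpha$-strategically closed. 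Hence $\P_\kappa$ is a good iteration of the form treated in Lemma~\ref{lemma_indestructibleWC}, so for $G$ that is $\P_\kappa$-generic over $V$, $\kappa$ remains weakly compact in $V[G]$; moreover $\P_\kappa$ is $\kappa$-c.c.\ by Proposition~\ref{prop_goodIterations} and has strategically closed tails, so all cofinalities below $\kappa$ are preserved (and no new inaccessibles appear, inaccessibility being downward absolute).

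Next I would show $\lnot\Refl_0(\alpha)$ holds in $V[G]$ for every $\alpha\in\Tr_0(\kappa)^{V[G]}$. Such an $\alpha$ is inaccessible in $V$, hence---by the standard preservation facts for good iterations---inaccessible in $V^{\P_\alpha}$, so $\dot\Q_\alpha$ really is $\mathbb S(\alpha)$ as computed there, adding a generic set $S_\alpha$. Factor $\P_\kappa\cong\P_{\alpha+1}*\dot\P_{\alpha+1,\kappa}$. The least non-trivial stage of $\dot\P_{\alpha+1,\kappa}$, if any, is the next inaccessible above $\alpha$, which---being a limit cardinal---exceeds $\alpha^+$; hence $\dot\P_{\alpha+1,\kappa}$ is forced to be $\alpha^+$-strategically closed and so adds no new subsets of $\alpha$. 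Therefore it is enough to check $\lnot\Refl_0(\alpha)$ in $V^{\P_{\alpha+1}}=V^{\P_\alpha}[S_\alpha]$: by construction $S_\alpha$ is stationary in $\alpha$ and $S_\alpha\cap\beta$ is nonstationary in $\beta$ for every limit $\beta<\alpha$ of uncountable cofinality, in particular for every inaccessible $\beta<\alpha$; thus $S_\alpha$ is a stationary subset of $\alpha$ with no $\Pi^1_0$-indescribable proper initial segment, witnessing $\lnot\Refl_0(\alpha)$ there. Since the tail $\dot\P_{\alpha+1,\kappa}$ adds no clubs in any $\beta\le\alpha$, both the stationarity of $S_\alpha$ and the absence of a stationary initial segment persist to $V[G]$. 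Taking $E=\kappa$ then completes the reduction.

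I expect the essentially standard parts---that $\mathbb S(\alpha)$ is $\alpha$-strategically closed, that $\P_\kappa$ is a good iteration, and the arithmetic of the factorization---to be routine. The one point requiring care is the coordination among the three models $V^{\P_\alpha}$, $V^{\P_{\alpha+1}}$, and $V[G]$: one must know that an $\alpha$ which is inaccessible in $V[G]$ was already inaccessible at stage $\alpha$ (so that $\mathbb S(\alpha)$ is the intended forcing and $S_\alpha$ is genuinely stationary and non-reflecting there), and that the sufficiently closed tail neither kills the stationarity of $S_\alpha$ nor introduces a club witnessing reflection below $\alpha$. This is exactly what the strategic closure of the tails of a good iteration provides, and it is the main, if still standard, obstacle.
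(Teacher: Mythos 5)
Your proposal is correct and follows essentially the same route as the paper: the same Easton-support iteration adding a non-reflecting stationary set at each inaccessible stage, preservation of weak compactness via Lemma~\ref{lemma_indestructibleWC}, and the reduction to Proposition~\ref{proposition_n_square_can_hold_trivially} using the $\alpha^+$-strategic closure of the tail to keep each $S_\alpha$ stationary and non-reflecting. Your extra care about which model computes the inaccessibles (downward absoluteness, $E=\kappa$) is just a more explicit rendering of what the paper dismisses as a routine genericity argument.
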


\begin{proof}
For regular $\alpha>\omega$, let $\S_\alpha$ denote the usual forcing to add a nonreflecting stationary subset of $\alpha\cap\cof(\omega)$ (see Example~6.5 in \cite{MR2768691}). Recall that conditions in $\S_\alpha$ are bounded subsets $p$ of $\alpha\cap\cof(\omega)$ such that for every $\beta\leq\sup(p)$ with $\cf(\beta)>\omega$, the set $p\cap \beta$ is nonstationary in $\beta$.
It is not difficult to see that the poset $\S_\alpha$ is $\alpha$-strategically closed.

Now we let $\<\P_\alpha,\dot{\Q}_\beta\st \alpha\leq\kappa,\beta<\kappa\>$ be an Easton-support iteration of length $\kappa$ such that if $\alpha<\kappa$ is inaccessible, then $\dot{\Q}_\alpha$ is a $\P_\alpha$-name for $\S_\alpha^{V^{\P_\alpha}}$, and
otherwise $\dot{\Q}_\alpha$ is a $\P_\alpha$-name for trivial forcing.

Suppose $G$ is generic for $\P_\kappa$ over $V$. By Lemma~\ref{lemma_indestructibleWC}, since $\P_\kappa$ has all the right properties, $\kappa$ remains weakly compact in $V[G]$. Also, in $V[G]$, for each inaccessible $\alpha<\kappa$, by a routine genericity argument and the fact that the tail of the forcing iteration from stage $\alpha + 1$ to $\kappa$ is $\alpha^+$-strategically closed, the stage $\alpha$ generic $H_\alpha$ obtained from $G$ yields a nonreflecting stationary subset of $\alpha$: $S_\alpha=\bigcup H_\alpha$. Thus in $V[G]$, $\Refl_0(\alpha)$ fails for all inaccessible $\alpha<\kappa$, and hence $\square_1(\kappa)$ holds trivially by Proposition \ref{proposition_n_square_can_hold_trivially}.
\end{proof}
In Section~\ref{section_1_sqare_at_a_weakly_compact} we will show that $\square_1(\kappa)$ can hold non-trivially at a weakly compact cardinal.

\section{Preserving $\Pi^1_n$-indescribability by forcing}
In this section, we will provide some results to be used in indestructibility arguments for $\Pi^1_n$-indescribable cardinals in later sections.

The following two folklore lemmas (and their variants) are widely used in indestructibility arguments for large cardinals characterized by the existence of elementary embeddings.
\begin{lemma}[Ground closure criterion]\label{lemma_groundClosureCriterion}
Suppose $\kappa$ is a cardinal, $M$ is a $\kappa$-model, $\P\in M$ is a forcing notion, and $G\in V$ is generic for $\P$ over $M$. Then $M[G]$ is a $\kappa$-model.
\end{lemma}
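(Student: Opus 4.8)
The plan is to verify, one at a time, the conditions in the definition of a $\kappa$-model for $M[G]$: transitivity, $M[G]\models\ZFC^-$, $\kappa\in M[G]$, $|M[G]|=\kappa$, and $M[G]^{\lt\kappa}\subseteq M[G]$. Most of these I expect to be routine. Transitivity holds by construction of the forcing extension; $\kappa\in M[G]$ is immediate since $\kappa\in M\subseteq M[G]$; and $M[G]\models\ZFC^-$ follows from the forcing theorem for set forcing over models of $\ZFC^-$, using that $\P\in M$ is a set and $M\models\ZFC^-$. For the cardinality, I would observe that $M\subseteq M[G]$ via canonical check-names, so $|M[G]|\geq|M|=\kappa$, while every element of $M[G]$ has the form $\dot a_G$ for some $\P$-name $\dot a\in M$, and in $V$ the collection of $\P$-names lying in $M$ is a subset of $M$, hence of size at most $|M|=\kappa$; so $|M[G]|=\kappa$.

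The only step that requires genuine work is the closure $M[G]^{\lt\kappa}\subseteq M[G]$. I would fix an ordinal $\lambda<\kappa$ and a sequence $\vec a=\<a_\xi\st\xi<\lambda\>\in V$ with each $a_\xi\in M[G]$, and, invoking the axiom of choice in $V$, choose for each $\xi<\lambda$ a $\P$-name $\dot a_\xi\in M$ with $(\dot a_\xi)_G=a_\xi$. The key observation is that the function $\xi\mapsto\dot a_\xi$ lies in $V$, has domain the ordinal $\lambda<\kappa$, and takes all its values in $M$, so the hypothesis $M^{\lt\kappa}\subseteq M$ yields $\<\dot a_\xi\st\xi<\lambda\>\in M$. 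Working inside $M$, which has enough of $\ZFC^-$ for the following construction, I would then assemble a single $\P$-name coding this sequence of names, for instance $\dot b=\{\<\mathrm{op}(\check\xi,\dot a_\xi),\one_\P\>\st\xi<\lambda\}$, where $\mathrm{op}(\sigma,\tau)$ denotes the usual $\P$-name for the ordered pair $\<\sigma_G,\tau_G\>$; then $\dot b\in M$ and $\dot b_G=\<(\dot a_\xi)_G\st\xi<\lambda\>=\vec a$, so $\vec a\in M[G]$.

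I do not anticipate a serious obstacle. The two points worth a moment's care are that the forcing theorem really does go through for the power-set-free theory $\ZFC^-$ in the case of set forcing (this is standard), and that one must track where the choices are being made: the map $\xi\mapsto\dot a_\xi$ is constructed in $V$, not in $M$, but its range is contained in $M$, and handling exactly this kind of externally-built $\lt\kappa$-sequence of elements of $M$ is the purpose of the closure hypothesis $M^{\lt\kappa}\subseteq M$.
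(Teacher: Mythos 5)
Your proof is correct: the paper states this as a folklore lemma and gives no proof of its own, and your argument is precisely the standard one. In particular, you handle the only nontrivial point correctly, namely using the hypothesis $M^{\lt\kappa}\subseteq M$ to absorb the externally (in $V$) chosen $\lt\kappa$-sequence of names $\<\dot a_\xi\st\xi<\lambda\>$ into $M$ before assembling a single name there, with the remaining items (transitivity, $\ZFC^-$ via the forcing theorem over a transitive $\ZFC^-$ model, and the cardinality computation via names in $M$) being routine as you say.
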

\begin{lemma}[Generic closure criterion]\label{lemma_genericClosureCriterion}
Suppose $\kappa$ is a cardinal, $M$ is a $\kappa$-model, $\P \in M$ is a forcing notion with the $\kappa$-c.c., and $G$ is generic for $\P$ over $V$. Then $M[G]$ is a $\kappa$-model in $V[G]$.
\end{lemma}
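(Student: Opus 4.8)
The plan is to verify, in $V[G]$, the four clauses of the definition of ``$\kappa$-model'' for $M[G]$. Transitivity and $\kappa \in M[G]$ are immediate, since $M[G]$ is a generic extension of a transitive set and $\kappa \in M \subseteq M[G]$. Note also that $G$ is generic over $M$, because $\P \in M \subseteq V$ and so every dense subset of $\P$ lying in $M$ lies in $V$ and is met by $G$; hence $M[G]$ makes sense, and by the forcing theorem relativized to $M$ (which satisfies $\ZFC^-$) we get $M[G] \models \ZFC^-$ --- here I would simply cite the standard fact that generic extensions of models of $\ZFC^-$ satisfy $\ZFC^-$. For $|M[G]| = \kappa$ in $V[G]$, note that $M[G] = \{\sigma_G \st \sigma \in M \text{ a } \P\text{-name}\}$, so in $V[G]$ the evaluation map is a surjection from a subset of $M$ (of size $\le \kappa$) onto $M[G]$, while $\kappa \subseteq M[G]$ gives $|M[G]| \ge \kappa$.

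The only clause with genuine content is $M[G]^{\lt\kappa} \cap V[G] \subseteq M[G]$, and this is exactly where the $\kappa$-c.c.\ is used. Fix $\lambda < \kappa$ and a sequence $s = \<a_\xi \st \xi < \lambda\> \in V[G]$ with each $a_\xi \in M[G]$; fix a $\P$-name $\dot s \in V$ for $s$ and a condition $p_0 \in G$ forcing that $\dot s$ is a function with domain $\check\lambda$ each of whose values is the $\dot G$-evaluation of some element of $\check M$. For each $\xi < \lambda$, the set of $q \le p_0$ forcing $\dot s(\check\xi) = \sigma$ for some $\P$-name $\sigma \in M$ is dense below $p_0$, so fix a maximal antichain $A_\xi$ of such conditions together with names $\tau_{\xi,q} \in M$ for $q \in A_\xi$ such that $q \forces \dot s(\check\xi) = \tau_{\xi,q}$. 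By the $\kappa$-c.c.\ each $|A_\xi| < \kappa$; since $\P \in M$ is transitively contained in $M$, each $A_\xi$ and the associated function $q \mapsto \tau_{\xi,q}$ is a set of size $<\kappa$ all of whose members lie in $M$, hence lies in $M$ by $M^{\lt\kappa} \subseteq M$; and collecting these data over $\xi < \lambda$ (again a sequence of length $< \kappa$ of elements of $M$), the whole system $\< A_\xi,\, \< \tau_{\xi,q} \st q \in A_\xi \> \st \xi < \lambda \>$ lies in $M$ as well. Therefore $M$ contains the ``mixed'' $\P$-name
\[\dot t = \bigcup_{\xi < \lambda} \bigcup_{q \in A_\xi} \{(\dot p_{\xi,q}, q)\},\]
where $\dot p_{\xi,q} \in M$ is the canonical $\P$-name for the ordered pair $\<\check\xi, \tau_{\xi,q}\>$. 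As $A_\xi$ is a maximal antichain below $p_0 \in G$, the intersection $A_\xi \cap G$ is a singleton $\{q\}$, and then $(\tau_{\xi,q})_G = (\dot s(\check\xi))_G = a_\xi$; hence $\dot t_G = \< a_\xi \st \xi < \lambda \> = s$, so $s \in M[G]$, as desired.

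The step I expect to be the only real obstacle is precisely getting the witnessing names into $M$: a priori, for each $\xi$ the name $\sigma$ with $\sigma_G = a_\xi$ is produced in $V[G]$, so one cannot assume the sequence of such names lies in $M$. The $\kappa$-c.c.\ is what allows one to replace this ``external'' choice by antichain-indexed families that are small enough (size $<\kappa$) to be absorbed by $M$ via its $<\kappa$-closure; with no chain condition the argument collapses. I would also remark that the companion Lemma~\ref{lemma_groundClosureCriterion} is easier and needs no chain condition: there $G \in V$, so $M[G]$ is literally a size-$\kappa$ set of $V$, and $M[G]^{\lt\kappa} \subseteq M[G]$ is inherited directly from $M^{\lt\kappa} \subseteq M$ together with $G \in V$.
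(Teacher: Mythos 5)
Your proof is correct; the paper states this lemma as folklore and gives no proof of it, and your argument is exactly the standard one such a citation presupposes: use the $\kappa$-c.c.\ to replace the externally (in $V[G]$) chosen names for the entries of a ${<}\kappa$-sequence by antichain-indexed families of names of size ${<}\kappa$, absorb these into $M$ via $M^{\lt\kappa}\cap V\subseteq M$ together with transitivity of $M$ and $\P\subseteq M$, and mix them into a single name $\dot t\in M$, so that maximality of each $A_\xi$ below $p_0\in G$ gives $\dot t_G=s$. The remaining clauses (transitivity, $\kappa\in M[G]$, $|M[G]|=\kappa$ via the name-evaluation surjection, and $M[G]\models\ZFC^-$ by the standard preservation of $\ZFC^-$ under set forcing over transitive models) are handled as expected, so there is nothing to correct.
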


\begin{lemma}\label{lemma_strategic}
Suppose $\kappa$ is inaccessible, $\P$ is a $\kappa$-strategically closed forcing and $G$ is generic for $\P$ over $V$. Then $(V_\kappa,\in,A)\models\forall X\psi(X,A)$ implies $((V_\kappa,\in,A)\models\forall X\psi(X,A))^{V[G]}$ for all $A\in V_{\kappa+1}^V$ and all first order $\psi$.
\end{lemma}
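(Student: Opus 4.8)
The plan is to prove the contrapositive: if $(V_\kappa,\in,A)\models\forall X\,\psi(X,A)$ fails in $V[G]$, then it fails in $V$. First I would record two consequences of $\kappa$-strategic closure that the whole argument rests on. Since such $\P$ is $\lt\kappa$-distributive and $\kappa$ is inaccessible in $V$, the forcing adds no bounded subsets of $V_\kappa$, so $(V_\kappa)^{V[G]}=(V_\kappa)^V$ and $\kappa$ remains inaccessible in $V[G]$; thus the structures $(V_\kappa,\in,A)$ of the two models have the same universe, and the only difference relevant to us is that in $V[G]$ the predicate quantifier $\forall X$ may range over more subsets of $V_\kappa$. Moreover, $\lt\kappa$-distributivity gives that the intersection of fewer than $\kappa$ dense open subsets of $\P$ is dense, so below any condition, and for any $\delta<\kappa$, one can find a condition deciding $\dot X\cap\check V_\delta$ (only $|V_\delta|<\kappa$ statements of the form ``$\check a\in\dot X$'' are at stake). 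The crux, which I expect to be the one genuinely delicate point, is that although every bounded restriction of a name for a \emph{new} subset of $V_\kappa$ evaluates into $V$, the new subset itself need not; the strategy will be used to assemble, inside $V$, an honest ground-model set $X$ together with a club of levels at which $X$ reflects $\neg\psi$ correctly.

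Concretely, fix $r_0\in\P$ and a $\P$-name $\dot X$ with $r_0\forces\text{``}\dot X\subseteq\check V_\kappa\text{ and }(V_{\check\kappa},\in,\check A,\dot X)\models\neg\psi\text{''}$, and let $\dot E$ be a $\P$-name for $\{\delta<\check\kappa: (V_\delta,\in,\check A\cap V_\delta,\dot X\cap V_\delta)\prec(V_{\check\kappa},\in,\check A,\dot X)\}$. Since $\kappa$ is inaccessible in $V[G]$, $r_0$ forces $\dot E$ to be a club, and $r_0$ forces $(V_\delta,\in,\check A\cap V_\delta,\dot X\cap V_\delta)\models\neg\psi$ for every $\delta\in\dot E$, by elementarity. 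Working in $V$, I would fix a winning strategy $\sigma$ for Player~II in $\mathcal{G}_\kappa(\P)$ and use it to build a play $\langle q_\beta:\beta<\kappa\rangle$ all of whose entries lie below $r_0$, together with a strictly increasing continuous sequence $\langle\delta_\beta:\beta<\kappa\rangle$ cofinal in $\kappa$, so that at each successor stage Player~I picks $\delta_{\beta+1}>\delta_\beta$ and plays $q_{\beta+1}$ forcing both ``$\check\delta_{\beta+1}\in\dot E$'' (possible since $\dot E$ is forced unbounded) and ``$\dot X\cap\check V_{\delta_{\beta+1}}=\check Y_{\beta+1}$'' for some $Y_{\beta+1}\in V$ (possible by the density remark), while at limit stages $\lambda$ the strategy $\sigma$ returns a lower bound $q_\lambda$ of $\langle q_\beta:\beta<\lambda\rangle$ and I set $\delta_\lambda=\sup_{\beta<\lambda}\delta_\beta$. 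Because the $q_\beta$ are descending, the decided values cohere, so $X:=\bigcup_{\beta<\kappa}Y_{\beta+1}$ is a well-defined element of $P(V_\kappa)^V$ with $q_\beta\forces\dot X\cap\check V_{\delta_\beta}=\check{(X\cap V_{\delta_\beta})}$ for every $\beta$.

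Then I would conclude as follows. For every limit $\lambda<\kappa$, closedness of $\dot E$ together with $q_\lambda\leq q_\beta$ for $\beta<\lambda$ and $\delta_\lambda=\sup_{\beta<\lambda}\delta_\beta$ gives $q_\lambda\forces\check\delta_\lambda\in\dot E$, hence $q_\lambda\forces(V_{\check\delta_\lambda},\in,\check A\cap V_{\delta_\lambda},\dot X\cap V_{\delta_\lambda})\models\neg\psi$; since $q_\lambda$ also forces $\dot X\cap\check V_{\delta_\lambda}=\check{(X\cap V_{\delta_\lambda})}$ and $\check A\cap V_{\delta_\lambda}=\check{(A\cap V_{\delta_\lambda})}$, and the satisfaction relation of the set-sized structure $(V_{\delta_\lambda},\in,A\cap V_{\delta_\lambda},X\cap V_{\delta_\lambda})$ is absolute, it follows that this structure models $\neg\psi$ in $V$. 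Because $\{\delta_\lambda:\lambda<\kappa\text{ a limit}\}$ is a club, $\neg\psi$ thus holds at club-many points of the structure $(V_\kappa,\in,A,X)\in V$; by reflection in $V$ (legitimate since $\kappa$ is inaccessible in $V$) the formula $\psi$ would also reflect to club-many points, and as two clubs meet this forces $(V_\kappa,\in,A,X)\models\neg\psi$, i.e.\ $(V_\kappa,\in,A)\models\neg\forall X\,\psi(X,A)$ in $V$, as desired. Everything apart from the non-absoluteness issue isolated in the first paragraph is routine bookkeeping; note in particular that $\kappa$-strategic closure is invoked only to survive the fewer-than-$\kappa$ limit stages of the construction, and no lower bound for the full length-$\kappa$ play is required.
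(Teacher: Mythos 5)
Your proof is correct and is essentially the paper's own argument: you use Player II's winning strategy to interleave, in $V$, decisions about the name for the counterexample set with a name for a club of levels reflecting $\neg\psi$, assemble a ground-model set together with a club of levels where $\neg\psi$ holds, and then contradict this with the club of levels reflecting $\psi$ given by inaccessibility of $\kappa$ in $V$. The only differences (contrapositive rather than contradiction, and tracking the club of elementary-substructure levels instead of a name for a club of $\neg\psi$-reflecting levels) are cosmetic.
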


\begin{proof}
First, observe that since $\P$ is $\lt\kappa$-distributive, $\kappa$ remains inaccessible in $V[G]$ and $V_\kappa=V_\kappa^{V[G]}$.
Suppose towards a contradiction that $(V_\kappa,\in,A)\models\forall X\psi(X,A)$, but for some $B\subseteq V_\kappa$ in $V[G]$, $(V_\kappa,\in,A)\models\neg\psi(B,A)$. Let $\dot B$ be a $\P$-name for $B$. Since $\kappa$ is inaccessible in $V[G]$, the set $$C=\{\alpha<\kappa\st (V_\alpha,\in,A\cap\alpha,B\cap\alpha)\models\lnot\psi(B\cap\alpha,A\cap\alpha))\}$$ contains a club in $V[G]$. Let $\dot{C}$ be a $\P$-name for such a club. In $V$, we can use Player II's winning strategy in $\mathcal{G}_\kappa(\P)$ together with the names $\dot{B}$ and $\dot{C}$ to build $\hat{B}$ and $\hat{C}$ such that $\hat{C}$ is club in $\kappa$ and for each $\alpha\in\hat{C}$ we have $$(V_\alpha,\in,A\cap V_\alpha,\hat{B}\cap V_\alpha)\models\lnot\psi(\hat{B}\cap V_\alpha,A\cap V_\alpha).$$ Since $(V_\kappa,\in,A)\models\forall X\psi(X,A)$, we have $(V_\kappa,\in,A,\hat{B})\models \psi(\hat{B},A)$, and since $\kappa$ is inaccessible, the set $$\{\alpha<\kappa\st (V_\alpha,\in,A\cap V_\alpha,\hat{B}\cap V_\alpha)\models\psi(\hat{B}\cap\alpha,A\cap\alpha)\}$$ contains a club. Thus, there is an $\alpha\in \hat{C}$ such that $$(V_\alpha,\in,A\cap V_\alpha,\hat{B}\cap V_\alpha)\models\psi(\hat{B}\cap V_\alpha,A\cap V_\alpha),$$ a contradiction.
\end{proof}
\begin{corollary}\label{cor_kappaStatClosedPreservesPi11}
Suppose $\kappa$ is inaccessible, $\P$ is a $\kappa$-strategically closed forcing notion and $G$ is generic for $\P$ over $V$. If $N$ is a $\Pi^1_1$-correct $\kappa$-model in $V$, then $N$ remains a $\Pi^1_1$-correct $\kappa$-model in $V[G]$.
\end{corollary}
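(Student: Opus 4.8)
The plan is to combine Lemma~\ref{lemma_strategic} with the upward absoluteness of $\Sigma^1_1$ statements over $V_\kappa$, together with the observation that the relativized assertion ``$V_\kappa\models\varphi$'' interpreted inside $N$ depends only on $N$. First I would record the preservation of the $\kappa$-model property. Since $\P$ is $\kappa$-strategically closed it is $\lt\kappa$-distributive, so it adds no new $\lt\kappa$-sequences of ground-model sets, $\kappa$ remains inaccessible in $V[G]$, and $V_\kappa^{V[G]}=V_\kappa^V$. Hence $N^{\lt\kappa}\subseteq N$ continues to hold in $V[G]$ (any such sequence already lies in $V$, and therefore in $N$), while transitivity, $|N|=\kappa$, $\kappa\in N$, and $N\models\ZFC^-$ are absolute; thus $N$ is a $\kappa$-model in $V[G]$. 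Also $N\cap V_{\kappa+1}^{V[G]}=N\cap V_{\kappa+1}^V$, so the collection of admissible parameters for $\Pi^1_1$-correctness is unchanged.

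Next, fix a $\Pi^1_1$-formula $\varphi$ with parameters in $N\cap V_{\kappa+1}$; by coding finitely many second-order quantifiers, and finitely many (first- and second-order) parameters, into single ones, we may assume $\varphi$ has the form $\forall X\,\psi(X,A)$ with $\psi$ first order and $A\in N\cap V_{\kappa+1}$. The goal is to show $(V_\kappa\models\varphi)^{V[G]}\iff((V_\kappa\models\varphi)^N)^{V[G]}$. The middle step is the observation that, since $V_\kappa\in N$ and $N\models\ZFC^-$, the truth value of $(V_\kappa\models\varphi)^N$ is determined by $N$, $V_\kappa$ and $A$ alone, hence is the same computed in $V$ or in $V[G]$. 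Therefore it suffices to prove the claim $(V_\kappa\models\varphi)^V\iff(V_\kappa\models\varphi)^{V[G]}$ and then invoke the $\Pi^1_1$-correctness of $N$ in $V$: chaining equivalences gives $(V_\kappa\models\varphi)^{V[G]}\iff(V_\kappa\models\varphi)^V\iff((V_\kappa\models\varphi)^N)^V\iff((V_\kappa\models\varphi)^N)^{V[G]}$.

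For the claim, the forward direction is exactly Lemma~\ref{lemma_strategic}. For the backward direction, argue contrapositively: if $(V_\kappa\models\varphi)^V$ fails, choose $B\subseteq V_\kappa$ in $V$ with $(V_\kappa\models\neg\psi(B,A))^V$; since $\neg\psi$ is first order and $V_\kappa$, $A$, $B$ all lie in $V[G]$ with $V_\kappa$ computed the same way there, $(V_\kappa\models\neg\psi(B,A))^{V[G]}$ holds, so $B$ witnesses that $(V_\kappa\models\varphi)^{V[G]}$ fails. (Equivalently, this is the upward absoluteness of the $\Sigma^1_1$ statement $\neg\varphi$, which requires only $\lt\kappa$-distributivity of $\P$.)

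I do not anticipate any real obstacle: once Lemma~\ref{lemma_strategic} is in hand, the corollary is pure bookkeeping. The only places that call for a little care are the reduction of an arbitrary $\Pi^1_1$-formula to the single-quantifier, single-parameter form of Lemma~\ref{lemma_strategic}, and verifying that $V_\kappa$ and $N\cap V_{\kappa+1}$ survive the forcing unchanged — which is precisely what allows the absoluteness of the $N$-relativized satisfaction and the ground-model correctness of $N$ to propagate to $V[G]$.
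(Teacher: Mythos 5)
Your proof is correct and follows essentially the same route as the paper's: $<\kappa$-distributivity preserves the $\kappa$-model property and $V_\kappa$, Lemma~\ref{lemma_strategic} handles the $\Pi^1_1$ direction, and upward absoluteness of the $\Sigma^1_1$ direction is witnessed by a concrete $B\subseteq V_\kappa$. The only (inessential) difference is bookkeeping: the paper takes the witness $B$ inside $N$ and argues the two cases directly from what $N$ believes, whereas you take the witness in $V$ and chain through the absoluteness of the $N$-relativized satisfaction, which the paper uses implicitly anyway.
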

\begin{proof}
Clearly $N$ remains a $\kappa$-model because $\P$ is $\lt\kappa$-distributive.
Let $\varphi$ be a $\Pi^1_1$-statement, and suppose first that $(V_\kappa\models\varphi)^N$.
By $\Pi^1_1$-correctness, $V_\kappa\models\varphi$, and so by Lemma~\ref{lemma_strategic},
$(V_\kappa\models\varphi)^{V[G]}$. On the other hand,
if $(V_\kappa \models \lnot \varphi)^N$, then there is a $B \subseteq V_\kappa$ in $N$ witnessing
this failure. Since $N$, $V$, and $V[G]$ all have the same $V_\kappa$, $B$ witnesses the failure
of $\varphi$ in both $V$ and $V[G]$ as well, so $(V_\kappa \models \lnot \varphi)^{V[G]}$
\end{proof}
\begin{proposition}\label{proposition_nonresurection}
Suppose $\kappa$ is inaccessible, $\P$ is $\kappa$-strategically closed, and $G$ is generic for $\P$ over $V$. If $S\in P(\kappa)^V$ is $\Pi^1_1$-indescribable in $V[G]$, then $S$ is $\Pi^1_1$-indescribable in $V$.
\end{proposition}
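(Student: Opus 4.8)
The plan is to prove the contrapositive: assuming $S$ is \emph{not} $\Pi^1_1$-indescribable in $V$, I will show that it is not $\Pi^1_1$-indescribable in $V[G]$. So fix, in $V$, a set $A\subseteq V_\kappa$ and a $\Pi^1_1$-sentence $\varphi$ in the language of set theory augmented by a predicate symbol for $A$, such that $(V_\kappa,\in,A)\models\varphi$ while $(V_\alpha,\in,A\cap V_\alpha)\not\models\varphi$ for every $\alpha\in S$. After coding finitely many second-order quantifiers into one (using that $V_\kappa$ is closed under pairing), we may assume $\varphi$ has the form $\forall X\,\psi(X,A)$ with $\psi$ first-order. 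Since $\P$ is $\kappa$-strategically closed, it is $\lt\kappa$-distributive, so $\kappa$ remains inaccessible in $V[G]$ and $V_\alpha^{V[G]}=V_\alpha$ for every $\alpha\leq\kappa$; in particular $A$, and all the structures $(V_\alpha,\in,A\cap V_\alpha)$ for $\alpha\leq\kappa$, are the same in $V$ and $V[G]$.

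I would then check that the \emph{same} pair $(A,\varphi)$ witnesses that $S$ is not $\Pi^1_1$-indescribable in $V[G]$. That $(V_\kappa,\in,A)\models\varphi$ still holds in $V[G]$ is exactly the content of Lemma~\ref{lemma_strategic}, which says that $\kappa$-strategically closed forcing preserves the truth of $\Pi^1_1$ (i.e.\ $\forall X\,\psi$) statements over $(V_\kappa,\in,A)$; this is the one direction of absoluteness that is not immediate. For the non-reflection clause, fix $\alpha\in S$. In $V$ we have $(V_\alpha,\in,A\cap V_\alpha)\models\exists X\,\lnot\psi(X,A\cap V_\alpha)$, so there is $B\subseteq V_\alpha$ in $V$ with $(V_\alpha,\in,A\cap V_\alpha,B)\models\lnot\psi(B,A\cap V_\alpha)$. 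Since $\alpha<\kappa$, this is a first-order assertion about a structure lying in, and identical in, both $V$ and $V[G]$, hence it holds in $V[G]$ as well; thus $B$ still witnesses $(V_\alpha,\in,A\cap V_\alpha)\not\models\varphi$ in $V[G]$. As $\alpha\in S$ was arbitrary, $(A,\varphi)$ witnesses the failure of $\Pi^1_1$-indescribability of $S$ in $V[G]$, contradicting the hypothesis that $S$ is $\Pi^1_1$-indescribable there.

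I do not expect any genuine obstacle here: the statement is essentially an absoluteness bookkeeping argument, and the only nontrivial ingredient — upward persistence at $\kappa$ of the $\Pi^1_1$ witness $\varphi$ — has already been isolated as Lemma~\ref{lemma_strategic} (alternatively one could route through Corollary~\ref{cor_kappaStatClosedPreservesPi11} and the Hauser characterization, but the direct argument is shorter and does not require moving between ground-model and extension embeddings). The one point worth stating carefully is simply that the reflection clause in the definition of $\Pi^1_1$-indescribability is $\Sigma^1_1$ over the relevant $V_\alpha$ and hence automatically upward absolute, whereas the ``$\varphi$ holds at $\kappa$'' clause is $\Pi^1_1$ and needs strategic closure for its upward preservation.
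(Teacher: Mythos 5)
Your proposal is correct and in substance identical to the paper's argument: the paper proves the same implication by contradiction and re-runs inline exactly the strategic-closure/name construction that constitutes the proof of Lemma~\ref{lemma_strategic}, whereas you prove the contrapositive and simply cite that lemma for the persistence of the $\Pi^1_1$ statement at $\kappa$, correctly observing that the failure of $\varphi$ at each $\alpha\in S$ is a $\Sigma^1_1$ fact about an unchanged structure and hence upward absolute. No gaps.
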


\begin{proof}
Suppose towards a contradiction that there is $S\in P(\kappa)^V$ that is $\Pi^1_1$-indescribable in $V[G]$ but not $\Pi^1_1$-indescribable in $V$. In $V$, find a subset $A\subseteq V_\kappa$ and a $\Pi^1_1$ statement $\varphi=\forall X\psi(X,A)$ such that $(V_\kappa,\in,A)\models\varphi$ and for all $\alpha\in S$ we have $(V_\alpha,\in,A\cap V_\alpha)\models\lnot\varphi$. Since $\P$ is $\lt\kappa$-distributive, $V$ and $V[G]$ have the same $V_\kappa$, so it follows that in $V[G]$, by the $\Pi^1_1$-indescribability of $S$, it must be the case that $(V_\kappa,\in,A)\models\exists X\lnot\psi(X,A)$. Working in $V[G]$, we fix $B\subseteq V_\kappa$ such that \hbox{$(V_\kappa,\in,A)\models\neg\psi(B,A)$} and observe that the set $$C=\{\alpha<\kappa\st V_\alpha\models\lnot\psi(B\cap V_\alpha,A\cap V_\alpha)\}$$ contains a club. Let $\dot{C}$ be a $\P$-name for such a club, and let $\dot{B}$ be a $\P$-name for $B$. In $V$, we can use
Player II's winning strategy in $\mathcal{G}_\kappa(\P)$ together with $\dot{B}$ and $\dot{C}$ to build $\hat{B}$ and $\hat{C}$ such that $\hat{C}\subseteq\kappa$ is club and $\forall\alpha\in\hat{C}$, $V_\alpha\models\lnot\psi(\hat{B}\cap V_\alpha,A\cap V_\alpha)$. But this implies that $V_\kappa\models\lnot\psi(\hat{B},A)$, a contradiction.
\end{proof}
The converse of Proposition~\ref{proposition_nonresurection} is clearly false because the forcing ${\rm Add}(\kappa,1)$ to add a Cohen subset to $\kappa$ with bounded conditions can destroy the weak compactness of $\kappa$ and it is $\lt\kappa$-closed and therefore $\kappa$-strategically closed.
We will see in Section~\ref{sec_squareAndReflection} (Remark~\ref{remark_nonresurectionfails}) that Proposition~\ref{proposition_nonresurection} can fail for $\Pi^1_2$-indescribable sets.

A good iteration $\P_\kappa$ of length $\kappa$ is said to be \emph{progressively closed} if for every $\alpha<\kappa$, there is $\alpha \leq \beta_\alpha<\kappa$ such that every stage after $\beta_\alpha$ is forced to be $\alpha$-strategically closed. In this case, it is not difficult to see that $\P_{\beta_\alpha}$ forces that the tail of the iteration is $\alpha$-strategically closed.
Next, we will show that good progressively closed $\kappa$-length iterations preserve $\Pi^1_n$-correctness.



Let $\P$ be a forcing notion and suppose $\sigma$ is a $\P$-name. Recall that $\tau$ is a \emph{nice name for a subset of $\sigma$} if $$\tau=\bigcup(\{\pi\}\times A_\pi\st \pi\in\dom(\sigma)\},$$ where each $A_\pi$ is an antichain of $\P$. It is well known and easy to verify that for every $\P$-name $\mu$, there is a nice name $\tau$ for a subset of $\sigma$ such that $\one_{\P}\forces \mu\subseteq\sigma\rightarrow\mu=\tau$. We call such $\tau$ the \emph{nice replacement for $\mu$}.

\begin{lemma}\label{lemma_nice_names}
Suppose $\sigma$ is a $\P$-name and $n\geq 0$. Let $X_\sigma$ be the set of nice names for subsets of $\sigma$, let $p$ be a condition in $\P$ and let $\varphi$ be any $\Pi_n$-assertion in the forcing language of the form
\[(\forall x_1\subseteq\sigma)(\exists x_2\subseteq\sigma)\cdots\psi(x_1,\ldots,x_n).\]
Then $p\forces\varphi$ if and only if
\[(\forall \tau_1\in X_\sigma)(\exists \tau_2\in X_\sigma)\cdots p\forces\psi(\tau_1,\ldots,\tau_n).\]
The analogous statement holds for $\Sigma_n$-assertions in the forcing language.
\end{lemma}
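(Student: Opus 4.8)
The plan is to prove the $\Pi_n$ and $\Sigma_n$ statements simultaneously, by induction on $n$. The base case $n=0$ is immediate, since there are no leading quantifiers to extract and the claim reduces to $p\forces\psi\iff p\forces\psi$. For the inductive step, write a $\Pi_n$-assertion $\varphi$ as $(\forall x_1\subseteq\sigma)\,\theta(x_1)$, where $\theta(x_1)$ is a $\Sigma_{n-1}$-assertion of the forcing language whose matrix is $\psi$ with $x_1$ held fixed. It then suffices to prove two facts: the \emph{universal pull-out}
\[p\forces(\forall x_1\subseteq\sigma)\,\theta(x_1)\iff(\forall\tau_1\in X_\sigma)\,p\forces\theta(\tau_1),\]
and, for each fixed $\tau_1\in X_\sigma$, that $p\forces\theta(\tau_1)$ unpacks via the $\Sigma_{n-1}$-case of the induction hypothesis applied to the matrix $\psi$ with $\tau_1$ substituted for $x_1$. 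Composing these two gives exactly the asserted equivalence for $\varphi$. A $\Sigma_n$-assertion is handled dually: write it as $(\exists x_1\subseteq\sigma)\,\theta(x_1)$ with $\theta$ a $\Pi_{n-1}$-assertion, use the \emph{existential pull-out} $p\forces(\exists x_1\subseteq\sigma)\,\theta(x_1)\iff(\exists\tau_1\in X_\sigma)\,p\forces\theta(\tau_1)$, and invoke the $\Pi_{n-1}$-case of the induction hypothesis.

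For the universal pull-out, the forward direction is immediate: each $\tau\in X_\sigma$ satisfies $\one_\P\forces\tau\subseteq\sigma$, so $p\forces(\forall x_1\subseteq\sigma)\,\theta(x_1)$ instantiates to $p\forces\theta(\tau)$. For the converse I would argue contrapositively: if $p$ does not force $(\forall x_1\subseteq\sigma)\,\theta(x_1)$, pick $q\leq p$ and a $\P$-name $\mu$ with $q\forces\mu\subseteq\sigma\wedge\lnot\theta(\mu)$, and let $\tau$ be the nice replacement for $\mu$; then $\tau\in X_\sigma$ and $q\forces\mu=\tau$ (since $q\forces\mu\subseteq\sigma$), hence $q\forces\lnot\theta(\tau)$ and so $p\not\forces\theta(\tau)$.

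For the existential pull-out, the backward direction is again immediate from $\one_\P\forces\tau_1\subseteq\sigma$. For the forward direction, suppose $p\forces(\exists x_1\subseteq\sigma)\,\theta(x_1)$; by a density argument there is a maximal antichain $A$ below $p$ and, for each $r\in A$, a $\P$-name $\mu_r$ with $r\forces\mu_r\subseteq\sigma\wedge\theta(\mu_r)$. By the mixing lemma there is a single $\P$-name $\tau_1$ with $r\forces\tau_1=\mu_r$ for all $r\in A$; replacing $\tau_1$ by its nice replacement we may assume $\tau_1\in X_\sigma$, and since $r\forces\mu_r\subseteq\sigma$ forces $r\forces\tau_1\subseteq\sigma$, this replacement still satisfies $r\forces\tau_1=\mu_r$. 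Then $r\forces\theta(\tau_1)$ for every $r\in A$, and since $A$ is maximal below $p$ we conclude $p\forces\theta(\tau_1)$.

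The one step requiring genuine care is the forward direction of the existential pull-out, where a family of local witnesses $\mu_r$ must be glued into a single nice name forcing the existential statement: one must check that the name produced by the mixing lemma can be taken to be a nice name for a subset of $\sigma$ without destroying the coherence $r\forces\tau_1=\mu_r$. Everything else — extracting and re-prepending the leading quantifier, and feeding the substituted matrix into the induction hypothesis — is routine bookkeeping.
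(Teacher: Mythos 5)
Your proposal is correct and takes essentially the same route as the paper's proof: a simultaneous induction on $n$ for the $\Pi$ and $\Sigma$ forms, peeling off the outermost quantifier, with the easy direction by instantiation (since every $\tau\in X_\sigma$ is forced to be a subset of $\sigma$) and the hard universal direction by passing to a counterexample name below $p$ and replacing it with its nice replacement before invoking the induction hypothesis. The only difference is organizational: you isolate explicit ``pull-out'' equivalences and, in particular, spell out the maximal-antichain/mixing argument needed for the existential case, which the paper compresses into the remark that the proof for $\Sigma_{n+1}$ statements is similar.
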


\begin{proof}
We will prove the lemma simultaneously for $\Pi_n$ and $\Sigma_n$ statements by induction on $n$. Clearly the lemma holds for $n=0$. Assume inductively that the lemma holds for some $n$, and suppose $\varphi$ is an assertion in the forcing language of complexity $\Pi_{n+1}$. Let $$\varphi=(\forall x_1\subseteq\sigma)(\exists x_2\subseteq\sigma)\cdots\psi(x_1,\ldots,x_{n+1})=(\forall x_1\subseteq\sigma)\bar{\varphi}(x_1),$$ where $\psi$ is $\Delta_0$ and $\bar{\varphi}(x)$ is $\Sigma_n$. For the forward direction, clearly $p\forces \varphi$ implies $(\forall \tau_1\in X_\sigma)(p\forces \bar{\varphi}(\tau_1))$. By the inductive hypothesis applied to $p\forces\bar{\varphi}(\tau_1)$, we conclude that $(\forall \tau_1\in X_\sigma)(\exists \tau_2\in X_\sigma)\cdots p\forces\psi(\tau_1,\ldots,\tau_n)$. For the converse, suppose $(\forall\tau_1\in X_\sigma)(\exists\tau_2\in X_\sigma)\cdots p\forces\psi(\tau_1,\ldots,\tau_n)$ holds. Let us argue that $p\forces(\forall x_1\subseteq\sigma)(\exists x_2\subseteq\sigma)\cdots\psi(x_1,\ldots,x_n)$. If not, there is some $q\leq p$ and some $\P$-name $\mu$ for a subset of $\sigma$ such that $q\forces (\forall x_2\subseteq\sigma)\cdots \lnot\psi(\mu,x_2,\ldots,x_n)$. Let $\tau$ be a nice replacement for $\mu$ so that $q\forces (\forall x_2\subseteq\sigma)\cdots\lnot\psi(\tau,x_2,\ldots,x_n)$, or in other words, $q\forces\lnot\bar{\varphi}(\tau)$. By assumption $(\exists\tau_2\in X_\sigma)\cdots p\forces\psi(\tau,\tau_2,\ldots,\tau_n)$, so applying the inductive hypothesis, we obtain $p\forces (\exists x_2\subseteq\sigma)\cdots\psi(\tau,x_2,\ldots, x_n)$ and hence $p\forces\bar{\varphi}(\tau)$, a contradiction. The proof of the lemma for $\Sigma_{n+1}$ statements is similar.
\end{proof}

\begin{theorem}\label{theorem_easton_iterations_preserve_correctness}
Suppose $\kappa$ is a Mahlo cardinal, $N$ is a $\Pi^1_n$-correct $\kappa$-model and $\P\in N$ is a progressively closed good Easton-support iteration of length $\kappa$. If $G\subseteq \P$ is generic over $V$, then $N[G]$ is a $\Pi^1_n$-correct $\kappa$-model in $V[G]$.
\end{theorem}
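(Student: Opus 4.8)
The plan is to induct on $n$, with the base case $n=0$ being trivial (every $\kappa$-model is $\Pi^1_0$-correct, and $N[G]$ is a $\kappa$-model by the generic closure criterion, Lemma~\ref{lemma_genericClosureCriterion}, since $\P$ is $\kappa$-c.c.\ by Proposition~\ref{prop_goodIterations}). For the inductive step, assume the result for $n$ and suppose $N$ is $\Pi^1_{n+1}$-correct. We already know $N[G]$ is a $\kappa$-model, and by the inductive hypothesis it is $\Pi^1_n$-correct, so by $\Pi^1_n$-absoluteness it suffices to handle a genuinely $\Pi^1_{n+1}$ statement $\varphi = \forall X\, \bar\varphi(X)$, where $\bar\varphi$ is $\Sigma^1_n$, with parameters in $N\cap V_{\kappa+1}$. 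First I would note that, since $\P$ is progressively closed, for each $\alpha<\kappa$ there is $\beta_\alpha<\kappa$ such that $\P_{\beta_\alpha}$ forces the tail $\P_{\beta_\alpha,\kappa}$ to be $\alpha$-strategically closed; combined with Mahloness this lets me view $\P$ as $\P_{\beta}*\P_{\beta,\kappa}$ for suitable $\beta$ with the tail arbitrarily strategically closed, and the head $\P_\beta \in V_\kappa$ being small.

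The two directions of correctness are asymmetric, mirroring the proofs of Corollary~\ref{cor_kappaStatClosedPreservesPi11} and Proposition~\ref{proposition_nonresurection}. For the "upward" direction, suppose $(V_\kappa \models \varphi)^N$; I want $(V_\kappa\models\varphi)^{V[G]}$. Since $V_\kappa = V_\kappa^{N} = V_\kappa^{V} = V_\kappa^{V[G]}$ ($\P$ is $\lt\kappa$-distributive), by $\Pi^1_{n+1}$-correctness of $N$ we get $V_\kappa\models\varphi$ in $V$; the work is to push this up to $V[G]$. Here I would argue in $V$: given a $\P$-name $\dot B$ for a subset of $V_\kappa$, I need $V_\kappa[\text{-}] \models \bar\varphi(\dot B_G)$ forced by $\one$. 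Factor $\P \cong \P_\beta * \P_{\beta,\kappa}$ where $\beta$ is large enough that $\dot B$ is (essentially) a $\P_\beta$-name — using $\kappa$-c.c.\ and the fact that $\P_\beta \in V_\kappa$, a nice-name argument (Lemma~\ref{lemma_nice_names}) localizes $\dot B$ below some inaccessible $\beta<\kappa$. After forcing with $\P_\beta$, $B$ is in $V[G_\beta]$ and the tail $\P_{\beta,\kappa}$ is $\alpha$-strategically closed for large $\alpha$; then the $\Sigma^1_n$ assertion $\bar\varphi(B)$ — whose truth in $V_\kappa$ is witnessed by some $C\subseteq V_\kappa$, a $\Pi^1_{n-1}$ matter — is preserved into the final extension via the inductive hypothesis applied to a suitable $\Pi^1_n$-correct model, or directly via a strategic-closure diagonalization as in Lemma~\ref{lemma_strategic}. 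For the "downward" direction, suppose $(V_\kappa\models\lnot\varphi)^N$, i.e.\ there is $B\in N$, $B\subseteq V_\kappa$, with $(V_\kappa\models\lnot\bar\varphi(B))^N$; then $\lnot\bar\varphi(B)$ is $\Pi^1_n$, so by the inductive hypothesis $N[G]$ agrees it holds, witnessing $(V_\kappa\models\lnot\varphi)^{V[G]}$.

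The main obstacle is the upward direction: converting "$N$ thinks $V_\kappa\models\forall X\,\bar\varphi(X)$" into the assertion that the \emph{actual} $V[G]$ satisfies it, since $V[G]$ has strictly more subsets of $V_\kappa$ than $N$ or even $V$. The key insight making this work is the interplay between $\kappa$-c.c.\ (which reflects any new subset $B$ of $V_\kappa$ down to a name living below some inaccessible $\beta<\kappa$, so $B\in V[G_\beta]$ with $G_\beta$ a set-sized generic in $V_\kappa^{V[G]}$) and progressive closure (which guarantees the tail is highly strategically closed, hence by a Lemma~\ref{lemma_strategic}-style argument preserves the relevant $\Sigma^1_n$/$\Pi^1_{n-1}$ facts about $V_\kappa$ once $B$ has appeared). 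I would carry this out by: (1) reducing to a single universally quantified $X$; (2) fixing a $\P$-name $\dot B$ and a condition $p\forces \lnot\bar\varphi(\dot B)$ toward a contradiction; (3) using nice names and $\kappa$-c.c.\ to find an inaccessible $\beta<\kappa$ with $\beta > \beta_{\mathrm{rk}}$ and a $\P_\beta$-name for $B$; (4) in $V[G_\beta]$, noting $V_\kappa\models \bar\varphi(B)$ by $\Pi^1_{n+1}$-correctness of $N$ transferred through the small forcing $\P_\beta \in V_\kappa$ (so $N[G_\beta]$ is still $\Pi^1_{n+1}$-ish correct by a ground-model computation, or simply $V[G_\beta]$ itself sees $\bar\varphi(B)$ because $V_\kappa^{V[G_\beta]} = V_\kappa$ and $\varphi$ held in $V$); (5) applying the strategic-closure preservation (Lemma~\ref{lemma_strategic} and its method, possibly via the inductive hypothesis on $\Pi^1_n$-correct models) to the $\alpha$-strategically closed tail $\P_{\beta,\kappa}$ to conclude $\bar\varphi(B)$ survives to $V[G]$, contradicting $p\forces\lnot\bar\varphi(\dot B)$.
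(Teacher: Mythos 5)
Your plan rests on a false premise about the forcing: a progressively closed good Easton-support iteration of length $\kappa$ is \emph{not} ${<}\kappa$-distributive and does not fix $V_\kappa$. It adds bounded subsets of $\kappa$ cofinally often (e.g.\ the club-shooting and $\Q(\gamma)$ stages in the later sections), so $V_\kappa^{V[G]}=V_\kappa[G]\neq V_\kappa$. This breaks your reading of what must be proved: $\Pi^1_n$-correctness of $N[G]$ in $V[G]$ is about $\Pi^1_n$ statements over the \emph{new} $V_\kappa[G]$, with parameters in $N[G]\cap V_{\kappa+1}^{V[G]}$ that typically lie in neither $N$ nor $V$, and with truth evaluated in $N[G]$ -- not the transfer ``$(V_\kappa\models\varphi)^N\Rightarrow V_\kappa\models\varphi\Rightarrow(V_\kappa\models\varphi)^{V[G]}$'' you set up. The same error infects the mechanics of your upward direction: a counterexample witness $B$ is a subset of $V_\kappa^{V[G]}$, hence of size $\kappa$, and the $\kappa$-c.c.\ does not localize its name to a $\P_\beta$-name for any $\beta<\kappa$ (new unbounded subsets of $\kappa$ appear only in the full extension); moreover the tail $\P_{\beta,\kappa}$ is merely $\alpha$-strategically closed for a fixed $\alpha<\kappa$ and itself keeps changing $V_\kappa$, so the Lemma~\ref{lemma_strategic}-style diagonalization (which needs $\kappa$-strategic closure and an unchanging $V_\kappa$) is unavailable. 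Your downward direction and the trivial base case are fine, but the hard direction is exactly the one the plan does not deliver; the induction on $n$ does not supply it.

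The paper's proof is structured quite differently and avoids all of this by never leaving the ground-model $V_\kappa$: it fixes a $\P$-name $\sigma\in N$ for $V_\kappa^{V[G]}=V_\kappa[G]$ and a name $\dot A$ for the parameter, rewrites the statement ``$p\forces(\forall x_1\subseteq\sigma)(\exists x_2\subseteq\sigma)\cdots\bar\psi(\vec x,\dot A)$'' via Lemma~\ref{lemma_nice_names} as alternating quantifiers over \emph{nice names} for subsets of $\sigma$, and observes that both ``$\tau$ is a nice name for a subset of $\sigma$'' and ``$p\forces\psi(\tau_1,\ldots,\tau_n,\dot A)$'' are first-order over $(V_\kappa,\in,\sigma,\dot A,\P)$, so the whole forcing assertion is $\Pi^1_n$ over the ground-model $V_\kappa$. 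One then applies the $\Pi^1_n$-correctness of $N$ (at the old $V_\kappa$, with name parameters) to transfer this forcing assertion between $N$ and $V$, and evaluates at $p\in G$; the converse is symmetric, and no induction on $n$ or factoring of the iteration is needed. If you want to salvage your outline, the missing idea is precisely this translation of second-order quantification over the generic extension's $V_\kappa$ into quantification over nice names inside the ground-model $V_\kappa$, so that the correctness hypothesis on $N$ can be applied to a statement it can actually see.
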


\begin{proof}
By Proposition~\ref{prop_goodIterations}, $\P$ has the $\kappa$-c.c. and without loss of generality $\P\subseteq V_\kappa$. Thus, by the generic closure criterion Lemma~\ref{lemma_genericClosureCriterion}, $N[G]$ remains a $\kappa$-model in $V[G]$. By the progressive closure of the iteration, $V_\kappa^{V[G]}=V_\kappa[G]$. Thus, $V_\kappa^{N[G]}=V_\kappa^{V[G]}$. Let $\sigma\in N$ be a $\P$-name such that $\sigma_G=V_\kappa^{N[G]}=V_\kappa^{V[G]}$ and $\dom(\sigma)\subseteq V_\kappa$.

Let us argue that $N[G]$ is $\Pi^1_n$-correct. Suppose $(V_\kappa^{N[G]},\in,A)\models\varphi$ in $N[G]$, where
\[\varphi = \forall X_1\exists X_2\cdots \psi(X_1,\ldots,X_n,A)\]
is $\Pi^1_n$ and all quantifiers appearing in $\psi$ are first-order over $V_\kappa^{N[G]}$. Let $\dot{A}$ be a $\P$-name for $A$ such that $\dom(\dot{A})\subseteq V_\kappa$. Let $\bar{\psi}(x_1,\ldots,x_n,\dot{A})$ be a formula in the forcing language obtained from $\psi$ by replacing all parameters with $\P$-names and all first-order quantifiers ``$Qx$'' with ``$Qx\in\sigma$'' for $Q=\forall,\exists$. Let $\bar{\varphi}(\sigma,\dot{A})$ denote the following formula in the forcing language:
\[(\forall x_1\subseteq\sigma)(\exists x_2\subseteq\sigma)\cdots \bar{\psi}(x_1,\ldots,x_n,\dot{A}).\]
Since $(V_\kappa^{N[G]},\in,A)\models\varphi$ holds in $N[G]$, it follows that $N[G]\models\bar{\varphi}(\sigma_G,\dot{A}_G)$. Thus, we may choose $p\in G$ with $(p\forces\bar{\varphi}(\sigma,\dot{A}))^N$. By Lemma \ref{lemma_nice_names},
\begin{align}
(\forall\tau_1\in X_\sigma)(\exists\tau_2\in X_\sigma)\cdots p\forces\psi(\tau_1,\ldots,\tau_n,\dot{A})\label{equation_expressing_phi}
\end{align}
holds in $N$. The statement $p\forces\psi(\tau_1,\ldots,\tau_n,\dot{A})$ is first-order in the structure $(V_\kappa,\in,\tau_1,\ldots,\tau_n,\sigma,\dot{A},\P)$.\footnote{This can be proved by using the definition of the forcing relation and induction on complexity of formulas.} Furthermore, since ``$\tau\in X_\sigma$'' can be expressed by a first-order formula $\chi(\tau,\sigma)$ over $(V_\kappa,\in,\sigma,\tau,\P)$, it follows that the statement in (\ref{equation_expressing_phi}) is $\Pi^1_n$ over $(V_\kappa,\in,\sigma,\dot{A})$. Since $N\models$ ``(\ref{equation_expressing_phi}) holds in $(V_\kappa,\in,\sigma,\dot{A})$'' and $N$ is $\Pi^1_n$-correct at $\kappa$, it follows that (\ref{equation_expressing_phi}) holds in $(V_\kappa,\in,\sigma,\dot{A})$. Hence by Lemma \ref{lemma_nice_names}, $p\forces\bar{\varphi}(\sigma,\dot{A})$ over $V$, and since $p\in G$, we conclude that $V[G]\models\bar{\varphi}(\sigma,\dot{A}_G)$, which implies $(V_\kappa^{V[G]},\in,A)\models\varphi$ in $V[G]$.

An analogous argument establishes the converse, verifying that if\break $(V_\kappa^{V[G]},\in,A)\models\varphi$ for a $\Pi^1_n$-assertion $\varphi$ and $A\in N[G]$, then the same assertion holds in $N[G]$.
\end{proof}

A similar argument yields the following result.

\begin{corollary}\label{corollary_closed_forcing_preserves_correctness}
Suppose $\kappa$ is an inaccessible cardinal, $N$ is a $\Pi^1_n$-correct $\kappa$-model and $\P\in N$ is a ${<}\kappa$-distributive forcing notion of size $\kappa$. If $G\subseteq\P$ is generic over $V$, then $N[G]$ remains a $\Pi^1_n$-correct $\kappa$-model in $V[G]$.
\end{corollary}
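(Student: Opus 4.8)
The plan is to follow the proof of Theorem~\ref{theorem_easton_iterations_preserve_correctness}, using the fact that a single ${<}\kappa$-distributive poset is better behaved than a progressively closed iteration: it adds no new bounded subsets of $\kappa$ whatsoever. First I would note that, since $|\P| = \kappa$ and $\kappa$ is inaccessible, one may assume without loss of generality that $\P \subseteq V_\kappa$, so that $\P \in V_{\kappa+1} \cap N$ and $\one_\P \in V_\kappa$; and since $\P$ is ${<}\kappa$-distributive, $V_\kappa^{V[G]} = V_\kappa = V_\kappa^{N[G]}$ and $\kappa$ remains inaccessible in $V[G]$.

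Since $\P$ need not be $\kappa$-c.c., the one place the argument genuinely deviates from that of Theorem~\ref{theorem_easton_iterations_preserve_correctness} is in verifying that $N[G]$ is still a $\kappa$-model in $V[G]$: instead of the generic closure criterion I would argue directly. Transitivity, satisfaction of $\ZFC^-$, containing $\kappa$, and having size $\kappa$ are immediate, so only closure under ${<}\kappa$-sequences needs attention. Given $s\colon\gamma\to N[G]$ in $V[G]$ with $\gamma<\kappa$, fix in $V$ a surjection $e\colon\kappa\to N^\P$ onto the set of $\P$-names lying in $N$ (possible since $N^\P\subseteq N$ and $|N|=\kappa$) and set $s'(\xi)$ to be least with $e(s'(\xi))_G = s(\xi)$. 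Then $s'\colon\gamma\to\kappa$ lies in $V$ by ${<}\kappa$-distributivity, so $\langle e(s'(\xi))\st\xi<\gamma\rangle\in V$, and this sequence of $\P$-names lies in $N$ since $N^{<\kappa}\subseteq N$; assembling these names yields a $\P$-name $\dot\eta\in N$ with $\dot\eta_G = s$.

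The heart of the proof is then the verification that $N[G]$ is $\Pi^1_n$-correct in $V[G]$, which I would carry out exactly as in Theorem~\ref{theorem_easton_iterations_preserve_correctness}, taking $\sigma := \check V_\kappa$ (a $\P$-name in $N$ with $\dom(\sigma)\subseteq V_\kappa$ and $\sigma_G = V_\kappa$) in place of their name for $V_\kappa^{V[G]}$. Given $A\in N[G]\cap V_{\kappa+1}$, choose a nice name $\dot A\in N$ for $A$ with $\dom(\dot A)\subseteq V_\kappa$; given a $\Pi^1_n$-formula $\varphi = \forall X_1\exists X_2\cdots\psi$ with $(V_\kappa,\in,A)\models\varphi$ in $N[G]$, relativize the first-order quantifiers of $\psi$ to $\sigma$ to form $\bar\varphi$, and pick $p\in G$ with $(p\forces\bar\varphi(\sigma,\dot A))^N$. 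By Lemma~\ref{lemma_nice_names} applied inside $N$, the assertion $(\forall\tau_1\in X_\sigma)(\exists\tau_2\in X_\sigma)\cdots p\forces\psi(\tau_1,\ldots,\tau_n,\dot A)$ holds in $N$; this assertion is $\Pi^1_n$ over $(V_\kappa,\in,\sigma,\dot A)$, so by $\Pi^1_n$-correctness of $N$ it holds in $V$, and by Lemma~\ref{lemma_nice_names} in $V$ we obtain $p\forces\bar\varphi(\sigma,\dot A)$, whence $(V_\kappa,\in,A)\models\varphi$ in $V[G]$. The reverse implication is symmetric.

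I expect the main obstacle to be, as in Theorem~\ref{theorem_easton_iterations_preserve_correctness}, the bookkeeping that keeps the complexity at $\Pi^1_n$: one must ensure that $\P$, the names $\sigma,\dot A,\tau_1,\ldots,\tau_n$, and consequently the predicate ``$\tau\in X_\sigma$'' and the forcing relation ``$p\forces\psi(\tau_1,\ldots,\tau_n,\dot A)$'' are all coded within $V_\kappa$ (respectively, first-order definable over $(V_\kappa,\in,\ldots)$), so that the assembled statement is genuinely $\Pi^1_n$ and hence reflected correctly between $N$ and $V$; this is exactly where the hypotheses that $|\P| = \kappa$ and (after the reduction) $\P\subseteq V_\kappa$ are needed, and it is handled just as in the cited theorem.
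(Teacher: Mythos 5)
Your proof is correct and follows essentially the same route as the paper: reduce to $\P\subseteq V_\kappa$, use ${<}\kappa$-distributivity to get $V_\kappa^{N[G]}=V_\kappa^{V[G]}=V_\kappa$, and then run the argument of Theorem~\ref{theorem_easton_iterations_preserve_correctness} with $\sigma=\check V_\kappa$, the key point being that ``$\tau$ is a nice name for a subset of $\check V_\kappa$'' is first-order over $(V_\kappa,\in,\tau,\P)$. The only cosmetic difference is that you verify ${<}\kappa$-closure of $N[G]$ by hand via an enumeration of names, whereas the paper notes that distributivity keeps $N$ a $\kappa$-model in $V[G]$ and then invokes the ground closure criterion (Lemma~\ref{lemma_groundClosureCriterion}) applied to $G\in V[G]$ — the same argument in packaged form.
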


\begin{proof}
Without loss of generality we can assume that $\P\subseteq V_\kappa$. Since $\P$ is ${<}\kappa$-distributive, $N$ remains a $\kappa$-model in $V[G]$, and, since $G\in V[G]$, it follows that $N[G]$ is a $\kappa$-model in $V[G]$ by the ground closure criterion Lemma~\ref{lemma_groundClosureCriterion}. The ${<}\kappa$-distributivity of $\P$ entails that $V_\kappa^{N[G]}=V_\kappa^{V[G]}=V_\kappa$. Since the statement ``$\tau$ is a nice name for a subset of $\check{V}_\kappa$'' is first-order over the structure $(V_\kappa,\in,\tau,\P)$, the rest of the argument can be carried out as in the proof of Theorem \ref{theorem_easton_iterations_preserve_correctness}.
\end{proof}

The conclusion of Corollary~\ref{corollary_closed_forcing_preserves_correctness} need not hold if the $N$-generic filter $G$ is not fully $V$-generic (see Remark~\ref{rem_groundmodelGenericBreaksCorrectness}).

\section{Shooting $n$-clubs}

Hellsten \cite{MR2653962} showed that if $W\subseteq\kappa$ is any $\Pi^1_1$-indescribable (i.e., weakly compact) subset of $\kappa$, then there is a forcing extension in which $W$ contains a $1$-club and all weakly compact subsets of $W$ remain weakly compact.
We will define a generalization of Hellsten's forcing to shoot an $n$-club through a $\Pi^1_n$-indescribable subset of a cardinal $\kappa$ while preserving the $\Pi^1_n$-indescribability of all its subsets, so that, in particular, $\kappa$ remains $\Pi^1_n$-indescribable in the forcing extension.

Suppose $\gamma$ is an inaccessible cardinal and $A\subseteq\gamma$ is cofinal. For $n\geq 1$, we define a poset $T^n(A)$ consisting of all bounded $n$-closed $c\subseteq A$ ordered by end extension: $c\leq d$ if and only if $d=c\cap\sup_{\alpha\in d}(\alpha+1)$.

\begin{lemma}
For $n\geq 1$, if $\gamma$ is inaccessible and $A\subseteq\gamma$ is cofinal, then $T^n(A)$ is $\gamma$-strategically closed.
\end{lemma}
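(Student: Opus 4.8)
I want to show that Player II has a winning strategy in $\mathcal{G}_\gamma(T^n(A))$. The natural strategy is for Player II to always play so that the current condition is ``closed enough'' — specifically, at each even stage Player II should take the union of the conditions played so far and then \emph{correct} it by closing under $\Pi^1_{n-1}$-indescribable reflection points below its supremum. The key point is that, since the game has length $\gamma$ (an inaccessible cardinal) and conditions are bounded, the only way Player II can lose is at a limit stage $\delta < \gamma$ where the union $\bigcup_{\beta < \delta} c_\beta$ fails to be a legitimate condition — that is, fails to be $n$-closed. So I want to arrange, by the strategy, that this never happens.

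First I would set up the strategy. Let $\theta_\delta = \sup_{\beta<\delta}\sup(c_\beta)$ at a limit stage $\delta$, and let $c = \bigcup_{\beta<\delta} c_\beta$. This $c$ is a bounded subset of $A$ with $\sup(c) = \theta_\delta$. It need not be $n$-closed: there could be some $\alpha \le \theta_\delta$ with $c \cap \alpha \in \Pi^1_{n-1}(\alpha)^+$ but $\alpha \notin c$. To control this, I would have Player II maintain the following inductive invariant: after Player II's move at each even stage $\beta$, the condition $c_\beta$ is $n$-closed (which is automatic, since it's a condition) \emph{and} in fact $\Tr_{n-1}(c_\beta) \subseteq c_\beta$, i.e.\ $c_\beta$ contains \emph{all} of its $\Pi^1_{n-1}$-indescribable reflection points up to and including $\sup(c_\beta)$ — but the subtle issue is the point $\alpha = \theta_\delta$ itself at a limit. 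The resolution is that Player II, at stage $\delta$, should first form $c = \bigcup_{\beta<\delta} c_\beta$, then check whether $c \in \Pi^1_{n-1}(\theta_\delta)^+$; if so, play $c \cup \{\theta_\delta\}$ (after verifying $\theta_\delta \in A$, which holds because $A$ is cofinal in $\gamma$ — wait, that's not automatic, so instead one uses that $c \subseteq A$ and the relevant reflection points of $A$-subsets land in $A$ only if $A$ is $n$-closed, which it need not be).

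Here I realize the honest approach must be more careful: $A$ is merely cofinal, not $n$-closed, so $\theta_\delta$ need not lie in $A$, and then $c$ cannot be extended to include $\theta_\delta$ and is not a condition. The fix is that Player II's strategy must ensure the \emph{supremum points $\theta_\delta$ never become $\Pi^1_{n-1}$-indescribable reflection points of the generic} — which Player II achieves by ``thinning'': after each of Player I's moves, Player II extends the condition by a single point $\alpha \in A$ chosen large enough that the interval $(\sup(c_{\text{prev}}), \alpha)$ is bounded in a way that blocks reflection at future suprema. More precisely, Player II should, whenever possible, arrange $\sup(c_\beta)$ to \emph{not} be a limit of $\Tr_{n-1}(A \cap \sup(c_\beta))$, say by always appending a point $\alpha$ with $\alpha \notin \Tr_{n-2}(\cdots)$-type successor-like behavior; at limit stages $\delta$ of \emph{cofinality} below $\gamma$, the relevant smallness of $\mathrm{cf}(\theta_\delta)$ combined with normality of $\Pi^1_{n-1}(\theta_\delta)$ shows $c \cap \theta_\delta \notin \Pi^1_{n-1}(\theta_\delta)^+$, so $c$ is vacuously $n$-closed at the top and hence a genuine condition. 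This is the crux.

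\textbf{The main obstacle} is precisely this limit-stage analysis: showing that under Player II's strategy, for every limit $\delta < \gamma$, the point $\theta_\delta$ is \emph{not} a $\Pi^1_{n-1}$-indescribable reflection point of $c = \bigcup_{\beta<\delta}c_\beta$, so that $c$ remains $n$-closed and thus a condition. I expect this to follow from two facts: (1) if $\mathrm{cf}(\theta_\delta) = \mathrm{cf}(\delta) < \gamma$ then $c \cap \theta_\delta$ has a club in $\theta_\delta$ in its complement (built from the sequence of suprema $\sup(c_\beta)$), hence $c \cap \theta_\delta \notin \Pi^1_{n-1}(\theta_\delta)^+$ since $\Pi^1_{n-1}$-indescribable sets are stationary; and (2) in the remaining case one uses the strategy's bookkeeping to have explicitly inserted, cofinally in $\theta_\delta$, ordinals that are not in $\Tr_{n-2}$ of the relevant set, again defeating $\Pi^1_{n-1}$-indescribability by normality. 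Once $c$ is seen to be a condition at every limit stage, Player II can always move, the game lasts all $\gamma$ stages, and Player II wins; hence $T^n(A)$ is $\gamma$-strategically closed. The bounded-support and inaccessibility of $\gamma$ guarantee all conditions encountered remain in $V_\gamma$, so no size issues arise.
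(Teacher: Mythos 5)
Your overall shape is the right one---take unions at limit stages and show that the union is not a $\Pi^1_{n-1}$-indescribable subset of its supremum by exhibiting a club in its complement---but the strategy you actually describe does not secure this, and the justification offered in its place does not work. At a limit stage you play the plain union $c=\bigcup_{\beta<\delta}c_\beta$, and your ``thinning'' happens only at even successor stages. That leaves Player I free, at stage $\xi+1$ immediately after each limit stage $\xi$, to append the supremum $\theta_\xi$ itself whenever $\theta_\xi\in A$: this is a legal end-extension and is still $n$-closed. For a concrete defeat, take $n=1$, $\gamma$ Mahlo, and $A$ the (cofinal) set of non-inaccessible ordinals below $\gamma$. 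Stages $\delta$ with $\theta_\delta=\delta$ form a club in $\gamma$ no matter how the game is played, so some such $\delta$ is inaccessible; by then Player I has inserted $\theta_\xi$ for every limit $\xi<\delta$ with $\theta_\xi$ singular (all of these lie in $A$), and these form a stationary subset of $\delta$. Hence $c=\bigcup_{\beta<\delta}c_\beta$ is stationary in the inaccessible $\delta$, while $\delta\notin A$; any condition $d$ extending the run would satisfy $d\cap\delta=c$, forcing $\delta\in d\subseteq A$, so there is no legal move and Player II loses. Thus your claim (1)---that the sequence of suprema yields a club inside the complement of $c$---is false for the strategy as given (successor-stage suprema typically belong to the conditions, and the limit-stage suprema can be sneaked in by Player I). The case split is also off: $\cf(\theta_\delta)<\gamma$ always holds since $\gamma$ is regular, so your case (2) is not a ``remaining case''; the real dichotomy is whether $\theta_\delta$ is $\Pi^1_{n-1}$-indescribable at all, and leaving cofinally many gaps does not defeat stationarity, let alone $\Pi^1_{n-1}$-indescribability, so the appeal to normality there does nothing.

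The missing idea, and the way the paper's proof goes, is a jump over the supremum at limit stages: at limit stage $\delta$, Player II plays $\left(\bigcup_{\beta<\delta}c_\beta\right)\cup\{\eta_\delta\}$ for some $\eta_\delta\in A$ with $\eta_\delta>\theta_\delta$ (such $\eta_\delta$ exists because $A$ is cofinal in $\gamma$). Since all later conditions end-extend this one, every point added after stage $\delta$ lies above $\eta_\delta$, so $\theta_\delta$ is permanently excluded from every condition of the run. Consequently, at any later limit stage $\alpha$, the earlier limit-stage suprema form a club in $\theta_\alpha$ disjoint from the union; after discarding the trivial case in which $\theta_\alpha$ is not $\Pi^1_{n-1}$-indescribable, this club witnesses (since $\Pi^1_{n-1}$-indescribable sets are stationary) that the union is not $\Pi^1_{n-1}$-indescribable in $\theta_\alpha$, so the union together with one more point of $A$ above $\theta_\alpha$ is a legal move for Player II. With this single change the limit-stage analysis closes up and your argument becomes essentially the paper's proof.
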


\begin{proof}
We describe a winning strategy for player II in the game $\mathcal{G}_\kappa(T^n(A))$. Player II begins the game by playing $c_0=\emptyset$. At an even successor stage $\alpha+2$, player II chooses a condition $c_{\alpha+2}\in T^n(A)$ such that $c_{\alpha+2}\lneq c_{\alpha+1}$. At limit stages $\alpha<\gamma$, player II records an ordinal $\gamma_\alpha=\bigcup_{\beta<\alpha}c_\beta$, chooses an element $\eta_\alpha\in A\setminus(\gamma_\alpha+1)$ and plays $c_\alpha=\left(\bigcup_{\beta<\alpha}c_\beta\right) \cup\{\eta_\alpha\}$. In order to argue that $c_\alpha$ is a condition in $T^n(A)$, we need to verify, letting $c=\bigcup_{\beta<\alpha}c_\beta$, that $c$ is not a $\Pi^1_{n-1}$-indescribable subset of $\gamma_\alpha$.  We can assume that
$\gamma_\alpha$ is $\Pi^1_{n-1}$-indescribable, as otherwise $c \cap \gamma_\alpha$ is clearly not $\Pi^1_{n-1}$-indescribable. But then, by construction,
$\{\gamma_\xi \st \xi < \alpha \text{ is a limit ordinal}\}$ is a club (and hence an $(n-1)$-club) in $\gamma_\alpha$ disjoint from $c$, which implies that
$c$ is not a $\Pi^1_{n-1}$-indescribable subset of $\gamma_\alpha$. Thus, $c_\alpha$ is a valid play by Player II, and we have described a winning strategy
in $\mathcal{G}_\kappa(T^n(A))$.
\end{proof}

\begin{remark} \label{generalized_closure_remark}
	In fact, for $n \geq 2$, $T^n(A)$ satisfies the following strengthening of $\gamma$-strategic closure.
	For $X \subseteq \gamma$ and a poset $\P$, let $\mathcal{G}_{\gamma, X}(\P)$ be the modification of
	$\mathcal{G}_{\gamma}(\P)$ in which Player I plays at all stages indexed by an ordinal in $X$ and
	Player II plays elsewhere, and it is still the case that Player I wins if and only if there is a limit ordinal
	$\beta < \gamma$ such that $\langle p_\alpha \st \alpha < \beta \rangle$ has no lower bound in $\P$.
	So, $\mathcal{G}_\gamma(\P)$ is precisely the game $\mathcal{G}_{\gamma, X}(\P)$, where $X$ is the set of
	odd ordinals less than $\gamma$. A routine modification of the proof of the preceding lemma shows that Player II
	has a winning strategy in the game $\mathcal{G}_{\gamma, X}(T^n(A))$ if, for all $\beta < \gamma$,
	$X \cap \beta$ is not $\Pi^1_{n-1}$-indescribable. In particular, this is the case if $X$ is the set
	of all $\alpha < \gamma$ such that $\alpha$ is not $\Pi^1_{n-2}$-indescribable.
\end{remark}

\begin{theorem} \label{theorem_n_club_shooting}
Suppose that $n \geq 1$ and $S\subseteq\kappa$ is $\Pi^1_n$-indescribable. Then there is a forcing extension in which $S$
contains a $1$-club and all $\Pi^1_n$-indescribable subsets of $S$ from $V$ remain $\Pi^1_n$-indescribable.
\end{theorem}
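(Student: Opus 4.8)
The plan is to force with the poset $\P=T^n(S)$ introduced above, so that its generic object provides the desired $1$-club inside $S$, and then to verify that the $\Pi^1_n$-indescribability of subsets of $S$ is preserved by a master-condition/embedding-lifting argument built on Hauser's characterization.

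First I would record the basic properties of $\P$. Since $S$ is $\Pi^1_n$-indescribable, $\kappa$ is inaccessible and $S$ is cofinal in $\kappa$, so the preceding lemma gives that $\P$ is $\kappa$-strategically closed, and $|\P|=\kappa$ because conditions are bounded subsets of the inaccessible $\kappa$. In particular $\P$ is ${<}\kappa$-distributive, so it adds no bounded subsets of $\kappa$, preserves cofinalities and the inaccessibility of $\kappa$, and satisfies $V_\kappa^{V[G]}=V_\kappa$ for $\P$-generic $G$. Put $E=\bigcup G$. A routine density argument (any condition can be end-extended by one more point of $S$ above a prescribed ordinal, and this keeps the condition $n$-closed because the newly exposed initial segments are bounded, hence not $\Pi^1_{n-1}$-indescribable) shows $E$ is cofinal in $S$; and $E$ is $n$-closed, since if $E\cap\alpha\in\Pi^1_{n-1}(\alpha)^+$ in $V[G]$, then choosing $c\in G$ with $\sup(c)>\alpha$ we get $E\cap\alpha=c\cap\alpha$, this stays $\Pi^1_{n-1}$-indescribable in $V$ by ${<}\kappa$-distributivity (nothing below $\alpha<\kappa$ changed), and $n$-closedness of $c$ yields $\alpha\in c\subseteq E$. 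A standard interleaving argument using Player II's winning strategy in $\mathcal{G}_\kappa(\P)$ — run along a limit of uncountable cofinality whose supremum is steered into $S$, which is possible because $S$ is stationary — shows $E$ meets every club of $\kappa$ in $V[G]$, so $E$ is stationary; together with its closure properties this makes $E$ a $1$-club contained in $S$.

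The substance of the theorem is preservation of $\Pi^1_n$-indescribability. Fix $W\in P(S)^V$ that is $\Pi^1_n$-indescribable in $V$; I would show it remains so in $V[G]$ via Theorem~\ref{theorem_hauser}. Given $A\in P(\kappa)^{V[G]}$, pick a name $\dot A\in H(\kappa^+)^V$ and $p\in G$, and in $V$ choose a $\kappa$-model $M$ with $\P,\dot A,S,W,p\in M$ together with, by the $\Pi^1_n$-indescribability of $W$ in $V$ (clause (4)), an elementary $j:M\to N$ with $\crit(j)=\kappa$, $N$ a $\Pi^1_{n-1}$-correct $\kappa$-model, $\kappa\in j(W)\subseteq j(S)$, and $j,M\in N$. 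By Corollary~\ref{corollary_closed_forcing_preserves_correctness}, $N[G]$ is again a $\Pi^1_{n-1}$-correct $\kappa$-model in $V[G]$, and $M[G]$ is a $\kappa$-model in $V[G]$ by the ground closure criterion. Using that $\kappa\in j(S)$, one continues $E$ past $\kappa$ to obtain a condition of $j(\P)=T^n(j(S))$ lying below every element of $j\image G=G$, and below it one diagonalizes against the $\kappa$-many dense subsets of $N[G]$ belonging to $N$, exploiting the ($\kappa$-)strategic closure of the tail of $j(\P)$, to build an $N$-generic $\hat G$ with $G\subseteq\hat G$. Then $j$ lifts to $j:M[G]\to N[\hat G]$; since $\kappa\in j(W)$ and $A=\dot A_G\in M[G]$, this is exactly the data needed to conclude that $W$ is $\Pi^1_n$-indescribable in $V[G]$. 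Taking $W=S$ gives in particular that $\kappa$ remains $\Pi^1_n$-indescribable.

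The step I expect to be the real obstacle is the construction of $\hat G$ and the lift of $j$. Unlike the iterated-forcing situation of Lemma~\ref{lemma_indestructibleWC}, $T^n(S)$ is a poset of approximations rather than an iteration, so $\P$ is not a complete subforcing of $j(\P)$ in the naive way; one has to argue carefully that the cofinal generic $E$, continued past $\kappa$, really does furnish a legitimate condition of $j(\P)$ that is compatible with all of $G$, and that the resulting filter is genuinely $N$-generic. One also has to make sure that $N[\hat G]$ stays $\Pi^1_{n-1}$-correct in $V[G]$ even though $\hat G$ is only obtained by diagonalization and not by full $V[G]$-genericity; this is where one must lean on the strengthened strategic closure of $T^n$ recorded in Remark~\ref{generalized_closure_remark} together with the correctness-preservation results of the previous section, and it is the only place where the $n$-closedness built into the definition of $T^n$, as opposed to mere topological closure, is essential.
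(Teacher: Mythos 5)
There is a genuine gap, and it sits exactly where you flagged ``the real obstacle'': with the single-step poset $\P=T^n(S)$, the master-condition lift you propose cannot be carried out at all. Since $\crit(j)=\kappa$, every condition $d\in G$ is a bounded subset of $\kappa$ fixed by $j$, so $j\image G=G$, and the conditions of $j(\P)=T^n(j(S))$ as computed in $N$ are \emph{elements of} $N$. The object you want to use as a master condition, $E\cup\{\kappa\}$ with $E=\bigcup G$, is not in $N$ (from $E$ one recovers $G$, which is $V$-generic and $N\subseteq V$), so it is not a condition of $j(\P)$. Worse, no repair is possible by ``careful diagonalization'': any filter $\hat G\subseteq j(\P)$ that is $N$-generic must meet the dense set (definable in $N$) of conditions with supremum above $\kappa$; by the end-extension ordering, such a condition $c$ would have every $d\in G$ as an initial segment, whence $c\cap\kappa=E$ and $E\in N$, a contradiction. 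So there simply is no $N$-generic filter for $j(\P)$ containing $j\image G$, in $V[G]$ or in any outer model, and the lift $j:M[G]\to N[\hat G]$ you need for Hauser's characterization does not exist. This is precisely why the paper does not force with the club-shooting poset alone: it uses an Easton-support iteration $\P_{\kappa+1}$ which forces with $T^1(S\cap\gamma)$ at every inaccessible $\gamma\le\kappa$ with $S\cap\gamma$ cofinal. Then $j(\P_\kappa)\cong\P_\kappa*\dot T^1(S)*\dot\P_{\kappa,j(\kappa)}$, the tail generic is built by diagonalization over the $\kappa$-model $N[G*H]$, and --- crucially --- the final poset $j(\dot\Q_\kappa)$ is computed in $N[\hat G]$, a model that already contains the $V$-generic $H$, so the master condition $C(\kappa)\cup\{\kappa\}=\bigcup H\cup\{\kappa\}$ genuinely is a condition there, below $j\image H$. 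Correctness is then free: the extra forcing beyond $G*H$ is $(\kappa+1)$-strategically closed over $N[G*H]$, so it adds no subsets of $V_\kappa$, and $\Pi^1_{n-1}$-correctness of $N[G*H]$ (Theorem~\ref{theorem_easton_iterations_preserve_correctness} and Corollary~\ref{corollary_closed_forcing_preserves_correctness}, which do require full $V$-genericity) passes to $N[\hat G*\hat H]$.

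Two smaller points. First, you force with $T^n(S)$, but for $n\ge 2$ its generic is only $n$-closed, and $n$-closure (closure under $\Pi^1_{n-1}$-indescribable reflection points) is weaker than $1$-closure; so even granting stationarity you would not get the $1$-club demanded by the statement. The paper forces with $T^1(S\cap\gamma)$ at each stage (and its remark notes that $T^m$, $m\le n$, would instead shoot an $m$-club). Second, your direct strategy-interleaving argument for stationarity of the generic can in fact be made to work for the single-step poset (build a run of the game whose suprema form a club, pick a supremum in $S$, and top off the union with that point), but in the paper's setting this is unnecessary: stationarity of $C(\kappa)$ falls out of the lifted embedding via $\kappa\in j(C)$, which is the cleaner route once the iteration is in place.
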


\begin{proof}

Let $\P_{\kappa+1}=\<(\P_\alpha,\dot{\Q}_\beta)\st\alpha\leq\kappa+1, ~ \beta\leq\kappa\>$ be an Easton-support iteration such that
\begin{itemize}
\item if $\gamma\leq\kappa$ is inaccessible and $S\cap\gamma$ is cofinal in $\gamma$, then $\dot{\Q}_\gamma=(T^1(S\cap\gamma))^{V^{\P_\gamma}}$;
\item otherwise, $\dot{\Q}_\gamma$ is a $\P_\gamma$-name for trivial forcing.
\end{itemize}
\noindent Since $\kappa$ is $\Pi^1_n$-indescribable, Proposition~\ref{prop_goodIterations} implies that
$\P_\kappa$ has size $\kappa$ and the $\kappa$-c.c.. Forcing with $\P_{\kappa+1}$ therefore preserves the
inaccessibility of $\kappa$ because $\P_\kappa$ has the $\kappa$-c.c. and is progressively closed and
$\dot\Q_\kappa$ is forced to be $\lt\kappa$-distributive.

Suppose $G*H\subseteq\P_\kappa*\dot{\Q}_\kappa$ is generic over $V$. Clearly, $C(\kappa)=_{\defn}\bigcup H$ is a $1$-closed subset of $S$; to show that $C(\kappa)$ is a $1$-club subset of $\kappa$, it remains to show that $C(\kappa)$ is a stationary subset of $\kappa$ in $V[G*H]$.

Suppose $T\subseteq S$ is $\Pi^1_n$-indescribable in $V$. We will simultaneously show that in $V[G*H]$, $C(\kappa)$ intersects every club subset of $\kappa$ and $T$ remains $\Pi^1_n$-indescribable (in particular, $\kappa$ remains $\Pi^1_n$-indescribable). Fix $A,C\in P(\kappa)^{V[G*H]}$ such that $C$ is a club subset of $\kappa$ in $V[G*H]$. Let $\dot{A}, \dot{C},\dot{C}(\kappa)\in H(\kappa^+)$ be $\P_{\kappa+1}$-names such $\dot{A}_{G*H}=A$, $\dot{C}_{G*H}=C$ and $\dot{C}(\kappa)_{G*H}=C(\kappa)$. In $V$, let $M$ be a $\kappa$-model with $\dot{A},\dot{C},\dot{C}(\kappa),\P_{\kappa+1},T,S\in M$. Since $T$ is $\Pi^1_n$-indescribable in $V$, it follows by Theorem \ref{theorem_hauser} that there is a $\Pi^1_{n-1}$-correct $\kappa$-model $N$ and an elementary embedding $j:M\to N$ with critical point $\kappa$ such that $\kappa\in j(T)$.

Since $N^{<\kappa}\cap V\subseteq N$ and $j(S)\cap\kappa=S$, it follows that $j(\P_\kappa)\cong\P_\kappa* \dot{T}^1(S)*\dot{\P}_{\kappa,j(\kappa)}$, where $\dot{\P}_{\kappa,j(\kappa)}$ is a $\P_{\kappa+1}$-name for the tail of the iteration $j(\P_\kappa)$. Since $\P_\kappa$ has the $\kappa$-c.c., by the generic closure criterion (Lemma~\ref{lemma_genericClosureCriterion}), $N[G]$ is a $\kappa$-model in $V[G]$. Since $T^1(S)$ is $\kappa$-strategically closed, $N[G]$ remains a $\kappa$-model in $V[G*H]$, and hence by the ground closure criterion (Lemma~\ref{lemma_groundClosureCriterion}), $N[G*H]$ is a $\kappa$-model in $V[G*H]$. Since $\P_{\kappa,j(\kappa)}=(\dot{\P}_{\kappa,j(\kappa)})_{G*H}$ is $\kappa$-strategically closed in $N[G*H]$ and $N[G*H]$ is a $\kappa$-model in $V[G*H]$, it follows that there is a filter $G'\in V[G*H]$ which is generic for $\P_{\kappa,j(\kappa)}$ over $N[G*H]$ and the embedding $j$ lifts to $j:M[G]\to N[\hat{G}]$, where $\hat{G}\cong G*H*G'$.

Notice that $p=C(\kappa)\cup\{\kappa\}=\bigcup H\cup\{\kappa\}\in N[\hat{G}]$. Since $\kappa\in j(T)\subseteq j(S)$, we see that $N[\hat{G}]\models$ ``$p$ is a closed subset of $j(S)$''. Thus, $p\in j(T^1(S))$. Since $j(T^1(S))$ is $j(\kappa)$-strategically closed in $N[\hat{G}]$ and $N[\hat{G}]$ is a $\kappa$-model in $V[G*H]$ by the ground closure criterion, there is a filter $\hat{H}\in V[G*H]$ generic for $j(T^1(S))$ over $N[\hat{G}]$ with $p\in \hat{H}$. Since $p$ is below every condition in $j\image H$, we have $j\image H\subseteq \hat{H}$, and thus $j$ lifts to $j:M[G*H]\to N[\hat{G}*\hat{H}]$, where $\kappa\in j(C(\kappa))$. By Theorem \ref{theorem_easton_iterations_preserve_correctness} and Corollary \ref{corollary_closed_forcing_preserves_correctness},
$N[G * H]$ is a $\Pi^1_{n-1}$-correct $\kappa$-model in $V[G*H]$. Since $\bb{P}_{\kappa, j(\kappa)}$ and $j(T^1(S))$ are $(\kappa+1)$-strategically closed in $N[G*H]$,
it follows that $N[G*H]$ and $N[\hat{G} * \hat{H}]$ have the same subsets of $V_\kappa$, so, in particular,
$N[\hat{G}*\hat{H}]$ is a $\Pi^1_{n-1}$-correct $\kappa$-model in $V[G*H]$. Thus, by Theorem \ref{theorem_hauser}, we have verified that $T$ remains $\Pi^1_n$-indescribable in $V[G*H]$.

It remains to show that $C(\kappa)\cap C\neq\emptyset$. Recall that $C$ is a club subset of $\kappa$ in $V[G*H]$,
so $j(C)$ is a club subset of $j(\kappa)$ in $N[\hat{G}*\hat{H}]$. Since $j(C) \cap \kappa = C$, it follows
that $\kappa \in j(C)$, and hence $\kappa \in j(C(\kappa) \cap C)$. By elementarity, $C(\kappa) \cap C \neq
\emptyset$, so $C(\kappa)$ is a stationary and hence 1-club subset of $\kappa$ in $V[G*H]$.
\end{proof}

\begin{remark}
	In the proof of Theorem~\ref{theorem_n_club_shooting}, for any $m \leq n$ we can force with $T^m(S \cap \gamma)$ at every relevant
	$\gamma \leq \kappa$ instead of $T^1(S \cap \gamma)$. This iteration will still preserve
	the $\Pi^1_n$-indescribability of every subset of $S$ that is $\Pi^1_n$-indescribable in $V$, and it will
	shoot an $m$-club through $S$. If $m > 1$, then this forcing will have slightly better closure properties
	then $T^1(S \cap \gamma)$ (see Remark~\ref{generalized_closure_remark}), which could be useful for
	certain applications, though we have not found any such applications as of yet.
\end{remark}

\section{$\square_1(\kappa)$ can hold nontrivially at a weakly compact cardinal}\label{section_1_sqare_at_a_weakly_compact}

In this section, we will prove Theorem \ref{theorem_1_square_wc}, which implies that if $\kappa$ is $\kappa^+$ weakly compact then the principle $\square_1(\kappa)$ can be forced to hold at a weakly compact cardinal that has many weakly compact cardinals below it. Let us remind the reader that the corresponding relative consistency result was first obtained by Brickhill and Welch \cite{BrickhillWelch} using $L$.

First, we define a forcing to add a generic coherent sequence of $1$-clubs to a Mahlo cardinal $\kappa$.
\begin{definition}\label{definition_one_square_forcing}
Suppose $\kappa$ is a Mahlo cardinal. We define a forcing $\Q(\kappa)$ such that $q$ is a condition in $\Q(\kappa)$ if and only if
\begin{itemize}
	\item $q$ is a sequence with $\dom(q) = \inacc(\kappa) \cap (\gamma^q + 1)$ for some $\gamma^q < \kappa$,
	\item $q(\alpha)=C^q_\alpha$ is a $1$-club subset of $\alpha$ for each $\alpha\in\dom(q)$ and
	\item for all $\alpha,\beta\in\dom(q)$, if $C^q_\beta\cap\alpha\in\Pi^1_0(\alpha)^+$, then $C^q_\alpha=C^q_\beta\cap\alpha$.\footnote{Equivalently, for all $\alpha,\beta\in\dom(q)$, if $\alpha$ is inaccessible and $C_\beta^q\cap\alpha$ is stationary, then $C_\alpha^q=C_\beta^q\cap\alpha$.}
\end{itemize}
The ordering on $\Q(\kappa)$ is defined by letting $p\leq q$ if and only if $p$ is an end extension of $q$.
\end{definition}




\begin{proposition}\label{prop_Q(kappa)stratclosed}
  Suppose $\kappa$ is a Mahlo cardinal. The poset $\Q(\kappa)$ is $\kappa$-strategically closed.
\end{proposition}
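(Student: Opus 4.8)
The plan is to exhibit a winning strategy for Player II in $\mathcal{G}_\kappa(\Q(\kappa))$, following the familiar template for strategic closure of square-type forcings. Player II plays $\one_{\Q(\kappa)} = \emptyset$ at stage $0$, and at even successor stages simply copies whatever Player I last played (any condition is a legal response, so no strategy is needed there beyond staying below the current condition). The real content is in how Player II handles limit stages $\eta < \kappa$, where she must produce a genuine condition $q_\eta$ below all $q_\beta$ for $\beta < \eta$. Here I would have her use a bookkeeping device recorded along the side of the play: set $\gamma_\eta = \sup_{\beta<\eta}(\gamma^{q_\beta}+1)$ and take the natural union $q = \bigcup_{\beta<\eta} q_\beta$, which is a coherent sequence of $1$-clubs with domain $\inacc(\kappa)\cap\gamma_\eta$. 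The issue is that $q$ itself may not be a condition, because if $\gamma_\eta$ happens to be inaccessible, we must also specify $q(\gamma_\eta) = C_{\gamma_\eta}$, a $1$-club in $\gamma_\eta$ cohering with the rest; and even if $\gamma_\eta$ is not inaccessible, $q$ is already a condition and Player II just plays $q_\eta = q$. So the crux is the inaccessible case.

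When $\gamma_\eta$ is inaccessible, Player II needs to choose $C_{\gamma_\eta} \subseteq \gamma_\eta$ that is $1$-club (stationary and containing all its inaccessible stationary-reflection points) and that coheres: for every $\alpha \in \dom(q)$ with $C_{\gamma_\eta}\cap\alpha$ stationary in $\alpha$, we need $C^q_\alpha = C_{\gamma_\eta}\cap\alpha$. The standard trick is to arrange, via the side bookkeeping, that the ordinals $\gamma_\beta$ (for $\beta<\eta$ a limit) never become reflection points of $C_{\gamma_\eta}$ — concretely, Player II should insert into each new segment of her limit-stage plays a tail of an interval, so that the "old" coordinates $\gamma_\beta$ form a club in $\gamma_\eta$ along which $C_{\gamma_\eta}$ is \emph{not} stationary. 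Then a natural choice is $C_{\gamma_\eta} = \{\gamma_\beta : \beta<\eta \text{ limit}\}$ together with the already-committed coherent data below: more precisely, one builds $C_{\gamma_\eta}$ as the union over $\beta<\eta$ of $C^{q_\beta}_{\gamma^{q_\beta}}$ glued along the intervals $[\gamma^{q_\beta},\gamma^{q_{\beta'}})$, i.e. $C_{\gamma_\eta}$ threads the commitments Player II has already made at the $\gamma^{q_\beta}$. By making sure (during even-successor moves as well) that Player II's last coordinate $C^{q_\beta}_{\gamma^{q_\beta}}$ is itself trivial — a set with no inaccessible stationary reflection point, e.g. a club of order type $\cf(\gamma^{q_\beta})$ avoiding inaccessibles — one guarantees that $C_{\gamma_\eta}$ is $1$-closed. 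Stationarity of $C_{\gamma_\eta}$ follows because it contains a club of $\gamma_\eta$ (the gluing is continuous at the limit points $\gamma_\beta$). Coherence with the lower part of $q$ follows because below any $\gamma^{q_\beta}$, $C_{\gamma_\eta}$ agrees with what $q_\beta$ already committed.

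The main obstacle, and the step I expect to require genuine care, is verifying $1$-closure of $C_{\gamma_\eta}$ at the limit stage: one must rule out that some inaccessible $\alpha < \gamma_\eta$ with $\alpha \notin C_{\gamma_\eta}$ nonetheless has $C_{\gamma_\eta}\cap\alpha$ stationary in $\alpha$. Such an $\alpha$ could only arise strictly between consecutive bookkeeping points, i.e. $\alpha \in (\gamma^{q_\beta}, \gamma^{q_{\beta+1}})$ for some $\beta$, where $C_{\gamma_\eta}\cap\alpha = C^{q_{\beta+1}}_{\gamma^{q_{\beta+1}}}\cap\alpha$ (or one of the interval pieces); this is exactly prevented by choosing Player II's committed last coordinates to have no inaccessible reflection points. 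So the strategy's winning condition hinges on Player II always extending by such "trivial tails," and the write-up amounts to checking that this is consistent with the coherence constraints already forced on her by Player I's moves — which it is, since coherence only constrains $C_{\gamma_\eta}\cap\alpha$ at \emph{old} $\alpha$, leaving Player II free to choose the fresh top coordinate however she likes. Concluding, since Player II survives every limit stage below $\kappa$ and trivially survives successor stages, she wins $\mathcal{G}_\kappa(\Q(\kappa))$, so $\Q(\kappa)$ is $\kappa$-strategically closed.
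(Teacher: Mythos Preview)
Your overall framework is correct --- Player II must arrange her plays so that at every limit stage $\eta$ with $\gamma_\eta$ inaccessible she can produce a top $1$-club $C_{\gamma_\eta}$ that is stationary, $1$-closed, and coheres with everything below --- but the execution has an internal inconsistency that leaves a real gap.

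You begin by saying that at even successor stages Player II ``simply copies whatever Player I last played'' and that ``no strategy is needed there,'' but you later reverse this and say Player II should, at even-successor moves, choose top coordinates that are ``trivial'' clubs with no inaccessible stationary reflection points. These two instructions are incompatible, and neither one by itself works. If Player II merely copies, she has recorded no bookkeeping at all and has no way to define $C_{\gamma_\eta}$ at the first limit. If instead she places a fresh ``trivial'' club (say a short club avoiding inaccessibles) at the top of each even-successor condition, these clubs do \emph{not} end-extend one another, so the ``union over $\beta<\eta$ of $C^{q_\beta}_{\gamma^{q_\beta}}$'' is not a coherent object, and your gluing description does not determine a set --- let alone one that coheres with the lower entries of $q$ at every inaccessible $\alpha$ where it might reflect. (Concretely: at $\eta=\omega$ your candidate $\{\gamma_\beta : \beta<\eta \text{ limit}\}$ is empty, and the trivial clubs you placed at stages $2,4,6,\ldots$ bear no relation to one another.)

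What is actually needed --- and what the paper does --- is that Player II's even-successor move must \emph{extend the previous top $1$-club}: she jumps to the next inaccessible $\gamma^{q_{\delta+2}}$ above Player I's $\gamma^{q_{\delta+1}}$ and sets
\[
  C^{q_{\delta+2}}_{\gamma^{q_{\delta+2}}} \;=\; C^{q_\delta}_{\gamma^{q_\delta}} \cup \{\gamma^{q_\delta}\} \cup [\gamma^{q_{\delta+1}}, \gamma^{q_{\delta+2}}).
\]
The gap $(\gamma^{q_\delta}, \gamma^{q_{\delta+1}})$ guarantees no new inaccessible stationary reflection points are created inside Player I's territory, and the tail interval makes the set stationary. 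Now the top $1$-clubs form a genuine increasing chain, so at a limit $\eta$ one simply takes $C_{\gamma_\eta} = \bigcup_{\alpha<\eta,\ \alpha\text{ even}} C^{q_{\alpha+2}}_{\gamma^{q_{\alpha+2}}}$; this contains the club $\{\gamma^{q_\alpha}:\alpha<\eta \text{ even}\}$ (giving stationarity), inherits $1$-closure and coherence from its approximations, and the recursion continues. The essential point you are missing is that the work happens at the \emph{successor} stages, not the limit ones: Player II must actively build the thread she will need later, rather than hope to assemble one from unrelated trivial pieces.
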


\begin{proof}
We describe a winning strategy for Player II in the game $\mathcal G_\kappa(\Q(\kappa))$. We will recursively arrange so that, if $\delta < \kappa$
and $\langle q_\alpha \st \alpha < \delta \rangle$ is a partial play of the game with Player II playing according
to her winning strategy, then, for all limit ordinals $\beta < \delta$, we have $\{\gamma^{q_\alpha} \st \alpha < \beta, ~\alpha \mbox{ even}\}$ is a club in its supremum and,
if $\gamma^{q_\beta}$ is inaccessible, is a subset of $C^{q_\beta}_{\gamma^{q_\beta}}$.
We will also arrange that, for all even successor ordinals $\alpha < \beta < \delta$, $\gamma^{q_\alpha}$ and $\gamma^{q_\beta}$ are
inaccessible cardinals, $C^{q_\beta}_{\gamma^{q_\beta}} \cap \gamma^{q_\alpha} = C^{q_\alpha}_{\gamma^{q_\alpha}}$
and $\{\gamma^{q_\alpha} \st \alpha < \beta, ~ \alpha \text{ even}\} \subseteq C^{q_\beta}_{\gamma^{q_\beta}}$.

We first deal with successor ordinals. Suppose that $\delta < \kappa$ is an even ordinal and $\langle q_\alpha \st \alpha \leq \delta + 1 \rangle$ has been played. Suppose first that $\gamma^{q_\delta}$ is an inaccessible cardinal (in particular, by our recursion hypotheses, this must be the case if $\delta$ is a successor ordinal). In this case, let $\gamma^{q_{\delta + 2}}$ be the least inaccessible cardinal above $\gamma^{q_{\delta + 1}}$ and let $q_{\delta + 2}$ be the condition extending $q_{\delta + 1}$ by setting
  \[ C^{q_{\delta + 2}}_{\gamma^{q_{\delta + 2}}} = C^{q_{\delta}}_{\gamma^{q_\delta}}
    \cup \{\gamma^{q_\delta}\} \cup [\gamma^{q_{\delta + 1}}, \gamma^{q_{\delta + 2}}).
  \]
The fact that $C^{q_\delta}_{\gamma^{q_\delta}} \cup \{\gamma^{q_\delta}\} \subseteq C^{q_\delta + 2}_{\gamma^{q_{\delta + 2}}}$
ensures that the recursion hypothesis is maintained. The set $C^{q_{\delta + 2}}_{\gamma^{q_{\delta + 2}}}$ is stationary
in $\gamma^{q_{\delta + 2}}$ because it contains a tail, and it has all its inaccessible stationary reflection points because
those are $\lesseq \gamma^{q_{\delta}}$. The coherence property holds because we have omitted the interval $(\gamma^{q_\delta}, \gamma^{q_{\delta + 1}})$
from $C^{q_{\delta + 2}}_{\gamma^{q_{\delta + 2}}}$ ensuring that for no $\alpha$ in that interval is $C^{q_{\delta + 2}}_{\gamma^{q_{\delta + 2}}}\cap\alpha$ stationary.
It follows that $q_{\delta + 2}$ is a condition and a valid play for Player II.

If $\gamma^{q_\delta}$ is not inaccessible, then $\delta$ is a limit ordinal (by our recursion hypothesis). In this case, again let $\gamma^{q_{\delta + 2}}$ be the least inaccessible cardinal above $\gamma^{q_{\delta + 1}}$, and define $q_{\delta + 2}$ by setting
  \[
    C^{q_{\delta + 2}}_{\gamma^{q_{\delta + 2}}} = \bigcup_{\substack{\alpha < \delta \\
    \alpha \text{ even}}}C^{q_\delta}_{\gamma^{q_{\alpha + 2}}} \cup \{\gamma^{q_\delta}\}
    \cup [\gamma^{q_{\delta + 1}}, \gamma^{q_{\delta + 2}}).
  \]
A similar argument as above verifies that $q_{\delta + 2}$ is a valid play in the game and maintains our recursion hypotheses.

  Finally, suppose that $\delta < \kappa$ is a limit ordinal and $\langle q_\alpha
  \st \alpha < \delta \rangle$ has been played. Let $\gamma^{q_\delta} =
  \sup\{\gamma^{q_\alpha} \st \alpha < \delta\}$. If $\gamma^{q_\delta}$ is not
  inaccessible, then we can simply set $q_\delta = \bigcup_{\alpha < \delta} q_\alpha$.
  If $\gamma^{q_\delta}$ is inaccessible, then we must additionally define
  $C^{q_\delta}_{\gamma^{q_\delta}}$. We do this by setting
  \[
    C^{q_\delta}_{\gamma^{q_\delta}} = \bigcup_{\substack{\alpha < \delta \\
    \alpha \text{ even}}}C^{q_\delta}_{\gamma^{q_{\alpha + 2}}}.
  \]
  It is easy to verify that this is as desired. The fact that $C^{q_\delta}_{\gamma^{q_\delta}}$
  is stationary in $\gamma^{q_\delta}$  follows from the fact that $\{\gamma^{q_\alpha} \st \alpha < \delta, ~
  \alpha \mbox{ even}\} \subseteq C^{q_\delta}_{\gamma^{q_\delta}}$, so it in
  fact contains a club in $\gamma^{q_\delta}$.
\end{proof}
It follows from Proposition~\ref{prop_Q(kappa)stratclosed} that $\Q(\kappa)$ is $\lt\kappa$-distributive. In particular, if $G\subseteq\Q(\kappa)$ is a generic filter,
then $\bigcup G$ is a coherent sequence of $1$-clubs of length $\kappa$, since $V_\kappa$ remains unchanged.

Next, we define a forcing which will be used to generically thread a coherent sequence of 1-clubs.
\begin{definition}
Suppose that $\vec{C}(\kappa)=\<C_\alpha(\kappa)\st\alpha\in\inacc(\kappa)\>$ is a coherent sequence of $1$-clubs. The poset $\T(\vec{C}(\kappa))$ consists of all conditions $t$ such that
\begin{itemize}
\item $t$ is a $1$-closed bounded subset of $\kappa$ and
\item for every $\alpha<\kappa$, if $t\cap\alpha\in\Pi^1_0(\alpha)^+$, then $C_\alpha(\kappa)=t\cap\alpha$.\footnote{Equivalently, for every inaccessible cardinal $\alpha<\kappa$, if $t\cap\alpha$ is stationary in $\alpha$ then $C_\alpha(\kappa)=t\cap\alpha$.}
\end{itemize}
The ordering on $\T(\vec{C}(\kappa))$ is defined by letting $t\leq s$ if and only if $t$ end-extends $s$.
\end{definition}

\begin{lemma}\label{lemma_thread_forcing}
Suppose $\kappa$ is a regular cardinal and $\vec{C}(\kappa)$ is a coherent sequence of $1$-clubs. Then the poset $\T(\vec{C}(\kappa))$ is $\kappa$-strategically closed.\footnote{Note that the forcing to thread a $\square(\kappa)$-sequence is never $\kappa$-strategically closed.}
\end{lemma}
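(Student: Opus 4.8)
The plan is to exhibit a winning strategy for Player~II in the game $\mathcal{G}_\kappa(\T(\vec C(\kappa)))$. Write $\vec C(\kappa)=\langle C_\alpha\st\alpha\in\inacc(\kappa)\rangle$ and, for a condition $t$, set $o(t)=\sup\{\xi+1\st\xi\in t\}<\kappa$, so that $s$ end-extends $t$ exactly when $s\cap o(t)=t$. The algebraic fact that drives everything is this: \emph{for any condition $s$ and any ordinal $\nu>o(s)$, the set $s\cup\{\nu\}$ is again a condition.} Indeed, for every inaccessible $\alpha\le\nu$ we have $(s\cup\{\nu\})\cap\alpha=s\cap\alpha$, which for $\alpha>o(s)$ is bounded in $\alpha$ and hence non-stationary, so the only coherence obligations come from inaccessibles $\le o(s)$ (already met by $s$), and $1$-closure is inherited from $s$ since $s\cup\{\nu\}$ has no $\Pi^1_0$-reflection point above $o(s)$. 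Thus Player~II can always ``jump'' the support of a condition arbitrarily far up while freezing every proper restriction as a non-stationary set. We will also use repeatedly that, by coherence of $\vec C(\kappa)$, $C_\eta\cup\{\eta\}$ is a condition for each inaccessible $\eta$, and more generally that every initial segment of $C_\eta$, together with $\eta$, is a condition.

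Player~II opens with $\emptyset$. At an even successor stage, given Player~I's last move $s$, let $\eta$ be the least inaccessible with $\eta\ge o(s)$. If $s$ is an initial segment of $C_\eta$ (i.e.\ $s=C_\eta\cap o(s)$), then Player~II extends $s$ a bounded amount along $C_\eta$, adding finitely many further elements of $C_\eta$, eventually $\eta$ itself, and then an ordinal above $\eta$; she thereby stays ``on the branch'' of $\vec C(\kappa)$, and if the play reaches supremum $\eta$ at a later limit stage the union will be exactly $C_\eta$. If, on the other hand, $s$ is not an initial segment of any $C_\zeta$ --- which happens precisely once Player~I has inserted an ordinal below $\eta$ lying off $C_\eta$ --- Player~II ``resets'' by jumping: she plays $s\cup\{\nu\}$ for a suitably chosen $\nu$, after which the restriction of the condition to every inaccessible $\le\nu$ is permanently frozen at the bounded set $s$, so none of those inaccessibles will ever be a reflection point of the eventual union. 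As in the proof of Proposition~\ref{prop_Q(kappa)stratclosed}, one carries along a recursion invariant on the sequence of tops $o(t_\alpha)$ so that Player~II remains in control of the supremum at limit stages. At a limit stage $\delta$, put $t=\bigcup_{\alpha<\delta}t_\alpha$ and $\gamma=o(t)$. If $\gamma$ is not inaccessible, then $\gamma$ is not a candidate reflection point, so $t$ is $1$-closed (no $\Pi^1_0$-reflection point below $\gamma$ is missed, the $t_\alpha$ forming an end-extension chain of $1$-closed sets) and coherent, and Player~II plays $t_\delta=t$; the same works if $\gamma$ is inaccessible but $t$ is non-stationary in $\gamma$. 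The only remaining case is $\gamma$ inaccessible and $t$ stationary in $\gamma$; then $\gamma$ is a reflection point of $t$, so any lower bound of $\langle t_\alpha\st\alpha<\delta\rangle$ must contain $\gamma$, and by the coherence clause of $\T(\vec C(\kappa))$ such a lower bound exists if and only if $C_\gamma=t$, in which case Player~II plays $t_\delta=t\cup\{\gamma\}$.

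The main obstacle is exactly to show that the strategy never reaches this last case with $t\neq C_\gamma$, and this is where coherence of $\vec C(\kappa)$ is indispensable; the argument runs by induction on limit stages. Assuming inductively that at every earlier limit stage where the stationary case occurred the union was the relevant entry of $\vec C(\kappa)$, one shows that whenever the supremum of the play passes an inaccessible $\eta<\gamma$, the condition-validity clause forces the frozen value $t\cap\eta$ to be either non-stationary in $\eta$ or equal to $C_\eta$. Granting --- this is the delicate point that the recursion invariant must secure, namely that Player~I cannot drive $t$ to be a stationary, non-reflecting subset of $\gamma$ different from $C_\gamma$ --- that the reflection points of the stationary, $1$-closed set $t$ are cofinal in $\gamma$, one obtains $t\cap\eta=C_\eta$ for cofinally many inaccessible $\eta<\gamma$, while coherence of $\vec C(\kappa)$ gives $C_\gamma\cap\eta=C_\eta$ at each such $\eta$; taking the union over these $\eta$ yields $t=C_\gamma$, so Player~II can play $t_\delta=t\cup\{\gamma\}$ and survive. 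The bookkeeping that makes this induction go through --- keeping Player~II on a coherent branch except when forced off it, and then jumping cleanly so that every abandoned restriction stays bounded and non-stationary --- is the technical heart of the proof, and it is modeled closely on the recursion in the proof of Proposition~\ref{prop_Q(kappa)stratclosed}. I expect this limit-stage analysis, rather than the (routine) verification that individual conditions of the prescribed forms are legitimate, to be where essentially all of the work lies.
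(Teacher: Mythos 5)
There is a genuine gap, and you have located it yourself: the entire proof hinges on the limit-stage case in which $t=\bigcup_{\alpha<\delta}t_\alpha$ is stationary in $\gamma=\sup(t)$ with $\gamma$ inaccessible, and there your argument says ``granting \dots that Player I cannot drive $t$ to be a stationary \dots subset of $\gamma$ different from $C_\gamma$.'' That granted claim is exactly what needs to be proved, and the sketch you give for it does not go through: from $t\cap\eta$ being stationary you may indeed conclude $t\cap\eta=C_\eta$ (this is the coherence clause in the definition of the conditions of $\T(\vec{C}(\kappa))$), but your next step, ``coherence of $\vec{C}(\kappa)$ gives $C_\gamma\cap\eta=C_\eta$ at each such $\eta$,'' is unjustified --- coherence of $\vec{C}(\kappa)$ only applies when $C_\gamma\cap\eta$ is stationary in $\eta$, which you do not know (a $1$-club in an inaccessible $\gamma$ need not reflect anywhere below $\gamma$), so you cannot conclude $t=C_\gamma$. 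Nor does the ``stay on the branch / reset by jumping'' bookkeeping obviously prevent the bad case: the branches $C_\eta$ for different $\eta$ need not cohere with one another (again, only where reflection occurs), so the union of the on-branch segments you accumulate at a limit need not be any $C_\gamma$, and no recursion invariant is actually formulated or verified. As written, the proposal is an outline whose hardest step is assumed.

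The fix is to abandon branch-tracking altogether; the whole difficulty is avoidable. The paper's strategy is: Player II plays arbitrarily at successors (any proper end-extension), and at each limit stage $\delta$ plays $t_\delta=\bigl(\bigcup_{\alpha<\delta}t_\alpha\bigr)\cup\{\kappa_\delta+1\}$, where $\kappa_\delta=\sup\bigl(\bigcup_{\alpha<\delta}t_\alpha\bigr)$ --- i.e., she deliberately \emph{skips} the supremum, putting $\kappa_\delta+1$ rather than $\kappa_\delta$ into the condition (your observation that $s\cup\{\nu\}$ is a condition for any $\nu>\sup(s)$ is exactly what legitimizes this, so you had the right ingredient but deployed it only reactively). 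The recursion hypothesis is simply that this was done at all earlier limit stages, so $\{\kappa_\lambda\st\lambda<\delta,\ \lambda\text{ limit}\}$ is a club in $\kappa_\delta$ disjoint from $\bigcup_{\alpha<\delta}t_\alpha$; hence the union is nonstationary in $\kappa_\delta$, so $\kappa_\delta$ is not a reflection point, $1$-closure is inherited from the $t_\alpha$, and the coherence clause imposes no new requirement at $\kappa_\delta$ (and is already satisfied below it). Thus the problematic case you were trying to control never arises, and the delicate limit-stage analysis your proposal defers is unnecessary.
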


\begin{proof}
We describe a winning strategy for player II in $\mathcal{G}_\kappa(\T(\vec{C}(\kappa)))$. Player II's strategy at successor ordinal stages can be arbitrary provided that Player II chooses conditions properly extending Player I's previous play.

So let $\delta$ be a limit stage and let $\<t_\alpha\st \alpha<\delta\>$ be the sequence of conditions played at previous stages of the game. Player II then plays $t_\delta=\left(\bigcup_{\alpha<\delta}t_\alpha\right)\cup\{\kappa_{\delta}+1\}$, where $\kappa_\delta=\sup\left(\bigcup_{\alpha<\delta}t_\alpha\right)$. We will also assume recursively that Player II has played according to this strategy successfully at previous limit stages of the game, so that, if $\lambda < \delta$ is a limit ordinal, then $\kappa_\lambda \notin t_\delta$. It remains to show that $t_\delta\in\T(\vec{C}(\kappa))$.

To argue that $t_\delta$ is a $1$-closed subset of $\kappa$, it suffices to see that $t_\delta\cap\kappa_\delta$ is not stationary in $\kappa_\delta$. By our recursive assumption,  $\{\kappa_\lambda\st \lambda<\delta\}$ is a club subset of $\kappa_\delta$ disjoint from $t_\delta$, and hence $t_\delta\cap\kappa_\delta$ is not stationary in $\kappa_\delta$. The coherence condition follows easily.
\end{proof}

\begin{lemma}\label{lemma_TaddsThread}
Suppose $\kappa$ is a regular cardinal and $\vec{C}(\kappa)=\<C_\alpha(\kappa)\st\alpha\in\inacc(\kappa)\>$ is a coherent sequence of $1$-clubs. If $G\subseteq \T(\vec C(\kappa))$ is generic over $V$, then $C_\kappa=\bigcup G$ threads $\vec C(\kappa)$ in $V[G]$.
\end{lemma}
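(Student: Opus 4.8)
I want to show that the generic object $C_\kappa = \bigcup G$ is a genuine thread through $\vec{C}(\kappa)$; that is, $C_\kappa$ is a $1$-club subset of $\kappa$ and for every $\alpha \in \inacc(\kappa)$ with $C_\kappa \cap \alpha \in \Pi^1_0(\alpha)^+$ (equivalently, stationary in $\alpha$) we have $C_\kappa \cap \alpha = C_\alpha(\kappa)$. The coherence requirement is essentially immediate: every condition $t \in G$ already satisfies it by definition, so if $C_\kappa \cap \alpha$ is stationary in $\alpha$ for some inaccessible $\alpha < \kappa$, then picking any $t \in G$ with $\sup(t) > \alpha$ we have $t \cap \alpha = C_\kappa \cap \alpha$ is stationary in $\alpha$, whence $C_\kappa \cap \alpha = t \cap \alpha = C_\alpha(\kappa)$. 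Likewise $1$-closedness passes up from the conditions: if $C_\kappa \cap \alpha$ is stationary in $\alpha$, then $\alpha \in C_\alpha(\kappa) \cup \{\alpha\}$... more carefully, $C_\alpha(\kappa)$ being a $1$-club in $\alpha$ is stationary in $\alpha$, so $C_\kappa \cap \alpha = C_\alpha(\kappa)$ means $\alpha$ is a reflection point of $C_\kappa$, and we must check $\alpha \in C_\kappa$; but any $t\in G$ with $\sup(t)>\alpha$ is $1$-closed with $t\cap\alpha = C_\kappa\cap\alpha$ stationary, so $\alpha \in t \subseteq C_\kappa$.

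The real content is that $C_\kappa$ is \emph{unbounded} in $\kappa$ and \emph{stationary} in $\kappa$. For unboundedness, fix $\beta < \kappa$ and any condition $t$; by Lemma~\ref{lemma_thread_forcing} (or directly) the conditions properly end-extend and there is no obstruction to extending $t$ to some $s \leq t$ with $\sup(s) > \beta$ — indeed just append a large enough successor ordinal above $\beta$ and above $\sup(t)$, which preserves $1$-closedness since the new top point is not inaccessible. Hence by genericity $\sup(C_\kappa) = \kappa$. For stationarity, I would run the standard density argument: given a $\T(\vec{C}(\kappa))$-name $\dot{D}$ for a club subset of $\kappa$ and a condition $t$, I build a decreasing sequence of conditions and a continuous increasing sequence of ordinals, using Player II's winning strategy from Lemma~\ref{lemma_thread_forcing} to get through limit stages, meeting dense sets that force more and more elements into $\dot{D}$; after $\omega$ (or more) steps the supremum $\eta$ of the ordinals forced into $\dot{D}$ is forced into $\dot{D}$ by closure, and the condition produced by Player II's strategy at that limit stage has $\eta$ available to be added (it is of the form $(\bigcup t_\alpha) \cup \{\kappa_\delta + 1\}$ with $\kappa_\delta = \eta$ a point of cofinality $\omega$, hence not inaccessible, so adjoining it keeps $1$-closedness and vacuously keeps coherence). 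This shows $C_\kappa \cap \dot{D}_G \neq \emptyset$ densely, so $C_\kappa$ is stationary.

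\textbf{Main obstacle.} The one point requiring genuine care is the interaction between adding a new point $\eta$ to a condition and the coherence requirement: when $\eta$ happens to be inaccessible, I am not free to simply append it, because then $C_\kappa \cap \eta$ would become stationary in $\eta$ and I would be forced to have already matched $C_\eta(\kappa)$. The strategy of Lemma~\ref{lemma_thread_forcing} sidesteps this by always keeping the running supremum out of the condition (the recursive hypothesis that $\kappa_\lambda \notin t_\delta$ for limit $\lambda$), so that at a limit stage $\delta$ the accumulated set $\bigcup_{\alpha<\delta} t_\alpha$ has $\{\kappa_\lambda : \lambda < \delta\}$ as a club in $\kappa_\delta$ disjoint from it, making $\bigcup_{\alpha<\delta}t_\alpha \cap \kappa_\delta$ nonstationary in $\kappa_\delta$ regardless of whether $\kappa_\delta$ is inaccessible — so coherence never needs to be checked there. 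In the stationarity argument I must arrange the bookkeeping so that the ordinals I force into $\dot D$ are exactly the supremum-ordinals $\kappa_\lambda$ of the game, i.e. I interleave "force an element of $\dot D$ above the current sup" with playing the game, so that $\eta$ ends up being $\kappa_\delta$ at a limit stage where Player II plays $(\bigcup t_\alpha)\cup\{\kappa_\delta+1\}$; then a trivial further extension appending $\eta$ itself is legitimate because $\eta$ has cofinality $\omega$ (being a limit of the $\kappa_\lambda$'s in a cofinal $\omega$-subsequence I can always extract) and hence is not inaccessible, so no coherence constraint is triggered. Everything else is routine.
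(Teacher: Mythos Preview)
Your approach is essentially the paper's: coherence and $1$-closedness of $C_\kappa$ are read off from the conditions via ${<}\kappa$-distributivity, and stationarity is proved by a density argument in which, given a name $\dot D$ for a club and a condition $t_0$, one builds a descending $\omega$-sequence $\langle t_n\rangle$ with $\beta_n=\sup(t_n)$, each $t_{n+1}$ forcing some $\alpha_n>\beta_n$ into $\dot D$, and then observes that $\eta=\sup_n\alpha_n=\sup_n\beta_n$ has cofinality $\omega$, hence is not inaccessible, so $t=\bigcup_n t_n\cup\{\eta\}$ is a legitimate condition forcing $\eta\in\dot D\cap\dot C_\kappa$.

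One small muddle to clean up: you route the limit step through Player II's strategy from Lemma~\ref{lemma_thread_forcing}, which at a limit stage outputs $(\bigcup_\alpha t_\alpha)\cup\{\kappa_\delta+1\}$ and deliberately \emph{omits} $\eta=\kappa_\delta$, and then you speak of a ``trivial further extension appending $\eta$ itself.'' That is not an end-extension, since $\eta<\eta+1$. The fix is to bypass the strategy at this one step and form $(\bigcup_\alpha t_\alpha)\cup\{\eta\}$ directly---which is precisely what the paper does, never invoking the strategy at all. You already have the crucial observation (that $\cf(\eta)=\omega$ makes $\eta$ non-inaccessible, so neither $1$-closedness nor coherence is triggered at $\eta$), so this is a phrasing slip rather than a genuine gap; the detour through the game is simply unnecessary here.
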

\begin{proof}
By the $\lt\kappa$-distributivity of $\T(\vec C(\kappa))$ and the definition of its conditions, $C_\kappa$ meets the coherence requirements and contains all its inaccessible stationary reflection points. So it remains to check that $C_\kappa$ is stationary.

Fix a club $C\subseteq\kappa$ in $V[G]$ and let $\dot C$ be a $\T(\vec C(\kappa))$-name for $C$. Assume towards a contradiction that $C\cap C_\kappa=\emptyset$. Fix $t_0\in \T(\vec C(\kappa))$ forcing that $\dot C$ is a club and $\dot C\cap \dot C_\kappa=\emptyset$, where $\dot C_\kappa$ is the canonical $\T(\vec C(\kappa))$-name for $C_\kappa$, and let $\beta_0$ be the supremum of $t_0$.
Recursively define a decreasing sequence $\langle t_n \st n < \omega \rangle$ of conditions from $\T(\vec{C}(\kappa))$ as follows, letting $\beta_n$ denote $\sup(t_n)$.
Given $n < \omega$, if $t_n$ is defined, find an ordinal $\alpha_n$ with $\beta_n < \alpha_n < \kappa$
and a condition $t_{n+1} \leq t_n$ such that $t_{n+1} \forces \check{\alpha}_n \in \dot{C}$.
Let $\alpha=\bigcup_{n<\omega}\alpha_n=\bigcup_{n<\omega}\beta_n$, and let $t=\bigcup_{n < \omega} t_n \cup\{\alpha\}$.
Clearly $t$ is a condition in $\T(\vec C(\kappa))$ and $t\forces \alpha\in\dot C \cap \dot C_\kappa$, which is the desired contradiction.
\end{proof}

\begin{theorem}\label{theorem_forcing_1square_at_a_weakly_compact}
Suppose $\kappa$ is weakly compact and the $\GCH$ holds. There is a cofinality-preserving forcing extension in which
\begin{enumerate}
\item for all $\gamma\leq\kappa$, every set $W\in P(\gamma)^V$ which is weakly compact in $V$ remains weakly compact and
\item $\square_1(\kappa)$ holds.
\end{enumerate}
\end{theorem}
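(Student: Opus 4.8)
The plan is to force with an Easton-support iteration $\P_\kappa=\langle\P_\alpha,\dot\Q_\beta:\alpha\leq\kappa,\ \beta<\kappa\rangle$ which at each appropriate (Mahlo) stage $\alpha<\kappa$ shoots a thread through the coherent sequence of $1$-clubs $\vec C(\alpha)=\langle C_\beta:\beta\in\inacc(\alpha)\rangle$ assembled at the earlier stages: that is, $\dot\Q_\alpha$ is a $\P_\alpha$-name for the thread forcing $\T(\vec C(\alpha))$ of the previous section, and $\dot\Q_\alpha$ is trivial otherwise. By Lemma~\ref{lemma_thread_forcing} each $\dot\Q_\alpha$ is forced to be $\alpha$-strategically closed, and it lies in $\dot V_\kappa$, so $\P_\kappa$ is a good, progressively closed iteration; hence by Proposition~\ref{prop_goodIterations} it has size $\kappa$ and is $\kappa$-c.c., and being ${<}\kappa$-distributive it preserves cofinalities, keeps $\kappa$ inaccessible, and leaves $V_\kappa$ unchanged. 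If $G$ is $\P_\kappa$-generic over $V$ then $V[G]$ is the intended model and $\vec C(\kappa)=\langle C_\alpha:\alpha\in\inacc(\kappa)\rangle$ — the threads selected along the iteration — is, by construction, a coherent sequence of $1$-clubs; it will be shown to be a $\square_1(\kappa)$-sequence. (One could also attempt to force directly with the poset $\Q(\kappa)$ of Definition~\ref{definition_one_square_forcing}; the iteration formulation is the one for which the lifting argument below runs smoothly, since initial segments of an iteration are complete subforcings.)

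For clause (1), the case $\gamma<\kappa$ reduces to the case $\gamma=\kappa$ applied "at $\gamma$": any $W\in P(\gamma)^V$ that is weakly compact in $V$ forces $\gamma$ to be weakly compact in $V$, and, factoring $\P_\kappa\cong\P_\gamma*\dot\P_{\gamma,\kappa}$ with $\dot\P_{\gamma,\kappa}$ forced to be $\gamma^+$-strategically closed, we get $P(\gamma)^{V[G]}=P(\gamma)^{V[G\restriction\gamma]}$, so it suffices to see $W$ remains weakly compact after forcing with $\P_\gamma$, which is a good progressively closed iteration of the same shape at $\gamma$. So it is enough to treat $\gamma=\kappa$ (which includes the weak compactness of $\kappa$ itself, taking $W=\kappa$), and here I would argue exactly as in the proof of Theorem~\ref{theorem_n_club_shooting}. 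Fix $W\in P(\kappa)^V$ weakly compact in $V$ and $A\in P(\kappa)^{V[G]}$ with a name $\dot A\in H(\kappa^+)^V$; pick a $\kappa$-model $M$ with $\dot A,W,\P_\kappa\in M$ and, by Theorem~\ref{theorem_hauser}, an elementary $j:M\to N$ with $\crit(j)=\kappa$ and $\kappa\in j(W)$. Then $j(\P_\kappa)\cong\P_\kappa*\dot\Q_\kappa*\dot\P_{\kappa,j(\kappa)}$, where $\dot\Q_\kappa$ names $\T(\vec C(\kappa))$ and the tail $\dot\P_{\kappa,j(\kappa)}$ is forced to be $\kappa$-strategically closed. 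Forcing $\dot\Q_\kappa$ over $V[G]$ yields $V[G*H]$ with an $H$-generic thread $C_\kappa=\bigcup H$ through $\vec C(\kappa)$; using the generic closure criterion (Lemma~\ref{lemma_genericClosureCriterion}) to see $N[G*H]$ is a $\kappa$-model in $V[G*H]$ together with the strategic closure of $\dot\P_{\kappa,j(\kappa)}$, one diagonalizes to an $N[G*H]$-generic $G'\in V[G*H]$ and lifts $j$ to $j^+:M[G]\to N[\hat G]$ with $\hat G\cong G*H*G'$. By the ground closure criterion (Lemma~\ref{lemma_groundClosureCriterion}) $N[\hat G]$ is a $\kappa$-model in $V[G*H]$, and since $\kappa\in j^+(W)$ with $W\in M[G]$, Theorem~\ref{theorem_hauser} shows $W$ is weakly compact in $V[G*H]$. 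As $V[G]\subseteq V[G*H]$ is a $\kappa$-strategically closed extension (Lemma~\ref{lemma_thread_forcing}) and $\kappa$ is inaccessible in $V[G]$, Proposition~\ref{proposition_nonresurection} then gives that $W$ is already weakly compact in $V[G]$.

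For clause (2), suppose toward a contradiction that $C\in V[G]$ threads $\vec C(\kappa)$, and force over $V[G]$ with $\T(\vec C(\kappa))$ — the stage-$\kappa$ forcing $\dot\Q_\kappa$ above — obtaining $V[G*H]$ with generic thread $C_\kappa=\bigcup H$; by the previous paragraph $\kappa$ is weakly compact in $V[G*H]$. Now $C$ and $C_\kappa$ are both $1$-clubs threading $\vec C(\kappa)$ there (strategic closure preserves stationarity and adds no new clubs of ordinals $<\kappa$), so by Corollary~\ref{cor_trace_n-1_indescribable} both $\Tr_0(C)$ and $\Tr_0(C_\kappa)$ are $1$-clubs, whence $E:=\Tr_0(C)\cap\Tr_0(C_\kappa)$ is a $1$-club, in particular cofinal in $\kappa$; for each $\alpha\in E$ the thread property gives $C\cap\alpha=C_\alpha(\kappa)=C_\kappa\cap\alpha$, and cofinality of $E$ then forces $C=C_\kappa$. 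But $\T(\vec C(\kappa))$ is separative and atomless — any condition $t$ has the two incompatible end-extensions $t\cup\{\sup(t)+1\}$ and $t\cup\{\sup(t)+2\}$ — so its generic filter, which is coded by $C_\kappa$, is not in $V[G]$, contradicting $C\in V[G]$. Hence $\vec C(\kappa)$ admits no thread and $\square_1(\kappa)$ holds in $V[G]$.

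The main obstacle is the lifting argument of the second paragraph: one must set up the iteration so that $j(\P_\kappa)$ genuinely factors through $\P_\kappa$, verify that the quotient — including the stage-$\kappa$ thread forcing — is $\kappa$-strategically closed over the $\kappa$-models in play, and carry out the diagonalized construction of the generic for the tail. This is precisely where the improved closure of $\T(\vec C(\kappa))$, in contrast with the classical $\square(\kappa)$-thread forcing which is never $\kappa$-strategically closed, is indispensable: it is what makes the preservation of weak compactness across the auxiliary step $V[G]\subseteq V[G*H]$ work via Proposition~\ref{proposition_nonresurection}, and hence what makes both clauses go through. A secondary technical point is the bookkeeping needed to identify, at each stage of the iteration, exactly which ordinals are "appropriate" (so that $\vec C(\alpha)$ is genuinely a coherent sequence of $1$-clubs of length $\alpha$) and to check coherence of the assembled sequence $\vec C(\kappa)$.
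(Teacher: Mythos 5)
Your architecture is genuinely different from the paper's (the paper forces with $\Q(\gamma)*\dot{\T}(\vec C(\gamma))$ at Mahlo $\gamma<\kappa$ and with $\Q(\kappa)$ alone at the top, so the $\square_1(\kappa)$-sequence is the generic object added by $\Q(\kappa)$ and the final model is $V[G*H]$), but as written your construction has a real defect: the object you call $\vec C(\kappa)$ is not a coherent sequence of $1$-clubs in the sense of Definition~\ref{definition_n_square}. Your iteration acts only at Mahlo stages, so the assembled sequence has entries only at Mahlo cardinals, whereas $\square_1(\kappa)$ requires an entry at every inaccessible $\alpha<\kappa$; likewise each $\vec C(\alpha)$ fed into the stage-$\alpha$ thread forcing is missing those entries (at the least Mahlo stage there is nothing to thread at all). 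This is not mere bookkeeping: a generic thread $C_{\alpha'}$ may reflect at a non-Mahlo inaccessible $\beta$, and two threads $C_{\alpha},C_{\alpha'}$ whose coherence is never tested at a common point of your domain can have distinct stationary traces at such a $\beta$, in which case no choice of $C_\beta$ completes the sequence to one indexed by $\inacc(\kappa)$. Ruling this out, or constraining the thread forcings so that threads never reflect at points without entries, is exactly the work that the poset $\Q(\kappa)$ of Definition~\ref{definition_one_square_forcing} and the strategy of Proposition~\ref{prop_Q(kappa)stratclosed} do, and nothing in your proposal replaces it. The same defect infects your clause-(2) argument, since the common reflection points $\alpha\in\Tr_0(C)\cap\Tr_0(C_\kappa)$ need not lie in the domain of your sequence, so ``$C\cap\alpha=C_\alpha(\kappa)=C_\kappa\cap\alpha$'' is not available there.

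There is also a gap in the preservation argument. You lift only to $j:M[G]\to N[\hat G]$, having fixed $A\in P(\kappa)^{V[G]}$, and then invoke Theorem~\ref{theorem_hauser} to conclude that $W$ is weakly compact in $V[G*H]$. That conclusion requires, for \emph{every} $A\in P(\kappa)^{V[G*H]}$, an embedding whose domain model contains $A$; sets added by $H$ (for instance $C_\kappa$ itself) do not lie in $M[G]$, so neither ``$W$ is weakly compact in $V[G*H]$'' nor the pullback to $V[G]$ via Proposition~\ref{proposition_nonresurection} is established as written. The fix is either a second lift through $H$ using the master condition $C_\kappa\cup\{\kappa\}\in j(\T(\vec C(\kappa)))$ below $j\image H$ (the paper's master-condition step for $j(\Q(\kappa))$, transposed to your setting), or a direct verification of the $\Pi^1_1$-indescribability of $W$ in $V[G]$ from the externally constructed embedding; you do neither. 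Two related slips: $\P_\kappa$ is not ${<}\kappa$-distributive and does not leave $V_\kappa$ unchanged (each nontrivial stage adds a new subset of that stage), and consequently your reduction for $\gamma<\kappa$ fails, since the stage-$\gamma$ thread forcing adds a new subset of $\gamma$, so $P(\gamma)^{V[G]}\neq P(\gamma)^{V[G\restrict\gamma]}$ and weak compactness of $W\subseteq\gamma$ must be preserved through stage $\gamma$ as well, not just through $\P_\gamma$.
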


\begin{proof}

Define an Easton-support iteration $\<(\P_\alpha,\dot\Q_\beta)\st\alpha\leq\kappa+1, ~ \beta\leq\kappa\>$ as follows.
\begin{itemize}
\item If $\gamma<\kappa$ is Mahlo, let $\dot{\Q}_\gamma=(\Q(\gamma)*\dot{\T}(\vec C(\gamma)))^{V^{\P_\gamma}}$, where $\vec C(\gamma)$ is the generic coherent sequence of 1-clubs of length $\gamma$ added by $\Q(\gamma)$.
\item If $\gamma=\kappa$, let $\dot\Q_\kappa=(\Q(\gamma))^{V^{\P_\kappa}}$.
\item Otherwise, let $\dot\Q_\gamma$ be a $\P_\gamma$-name for trivial forcing.
\end{itemize}

Let $G*H\subseteq\P_\kappa*\dot\Q_\kappa$ be generic over $V$. $V[G*H]$ is our desired model.
Standard arguments using progressive closure of the iteration $\P_\kappa$ together with the $\GCH$ show that cofinalities are preserved in $V[G*H]$.

The argument for the preservation of weakly compact subsets of $\gamma<\kappa$ is similar to and easier than the argument for the preservation of weakly compact subsets of $\kappa$, and we leave it to the reader.

Recall that $\vec{C}(\kappa)=\bigcup H$ is a coherent sequence of $1$-clubs of length $\kappa$. Fix $W\in P(\kappa)^V$ which is weakly compact in $V$. It remains to argue that in $V[G*H]$, $W$ is weakly compact and $\vec{C}(\kappa)$ has no thread.

Fix a set $C\in P(\kappa)^{V[G*H]}$ which is a $1$-club subset of $\kappa$ in $V[G*H]$. We will simultaneously show that $C$ is not a thread through $\vec{C}(\kappa)$ and that $W$ remains weakly compact in $V[G*H]$. Fix $A\in P(\kappa)^{V[G*H]}$ and let $\dot{C},\dot{A},\tau\in H(\kappa^+)^V$ be $\P_{\kappa+1}$-names with $\dot{C}_{G*H}=C$, $\dot{A}_{G*H}=A$ and $\tau_{G*H}=\vec{C}(\kappa)$. Let $M$ be a $\kappa$-model with $W,\dot{C},\dot{A},\tau,\P_{\kappa+1}\in M$. Since $W$ is weakly compact in $V$, there is a $\kappa$-model $N$ and an elementary embedding $j:M\to N$ such that
$\crit(j) = \kappa$ and $\kappa\in j(W)$.


Since $N^{<\kappa}\cap V\subseteq N$, we have, in $N$, $$j(\P_\kappa)\cong\P_\kappa*(\dot{\Q}(\kappa)*\dot{\T}(\vec C(\kappa)))*\dot{\P}_{\kappa,j(\kappa)},$$ where $\dot\P_{\kappa,j(\kappa)}$ is a $\P_{\kappa+1} * \dot{\bb{T}}(\vec{C}(\kappa))$-name for the iteration from $\kappa+1$ to $j(\kappa)$. By Lemma \ref{lemma_thread_forcing}, $\T(\vec C(\kappa))$ is $\kappa$-strategically closed in $N[G*H]$, and hence, using standard arguments, we can build a filter $h\in V[G*H]$ for $\T(\vec C(\kappa))$ which is generic over $N[G*H]$. Let $C_\kappa=\bigcup h$ and notice that $C_\kappa\neq C$ because $C\in N[G*H]$ and $C_\kappa$ is generic over $N[G*H]$. Similarly, we can build a filter  $G'\in V[G*H]$ which is generic for $\P_{\kappa,j(\kappa)}=(\dot \P_{\kappa,j(\kappa)})_{G*H*h}$ over $N[G*H*h]$. Since $j\image G\subseteq G*H*h*G'$, the embedding can be extended to $j:M[G]\to N[\hat{G}]$, where $\hat{G}=G*H*h*G'$.

Let $\Q(\kappa)=(\dot \Q(\kappa))_G$. Working in $N[\hat{G}]$, since $$\vec{C}(\kappa)=\bigcup H=\<C_\alpha(\kappa)\st\alpha\in\inacc(\kappa)\>$$ is a coherent sequence of $1$-clubs and $C_\kappa$ is a thread through $\vec{C}(\kappa)$ by Lemma~\ref{lemma_TaddsThread}, it follows that the function $$q=\<C_\alpha(\kappa)\st\alpha\in\inacc(\kappa)\>\cup\{(\kappa,C_\kappa)\}$$ is a condition in $j(\Q(\kappa))$ below every element of $j\image H$. We may build a filter $\hat{H}\in V[G*H]$ which is generic for $j(\Q(\kappa))$ over $N[\hat{G}]$ with $q\in \hat{H}$. Since $j\image H\subseteq\hat{H}$, it follows that $j$ extends to $j:M[G*H]\to N[\hat{G}*\hat{H}]$. Now $A\in M[G*H]$ and $\kappa\in j(W)$, so $W$ is weakly compact in $V[G*H]$.

It remains to show that $C$ is not a thread through $\vec{C}(\kappa)$. For the sake of contradiction, assume $C$ is a thread through $\vec{C}(\kappa)$. By elementarity we see that in $N[\hat{G}*\hat{H}]$, $$j(\vec{C}(\kappa))=\<\bar{C}_\alpha(j(\kappa))\st\alpha\in\inacc(j(\kappa))\>$$ is a coherent sequence of $1$-clubs. Since $q=\vec{C}(\kappa)\concat\<C_\kappa\>\in \hat{H}$ we have $\bar{C}_\kappa(j(\kappa))=C_\kappa$. Now since $C$ is a thread for $\vec{C}(\kappa)$ in in $M[G*H]$, by elementarity, $j(C)$ is a thread for $j(\vec{C}(\kappa))=\<\bar{C}_\alpha(j(\kappa))\st\alpha\in\inacc(j(\kappa))\>$. Since $\kappa$ is inaccessible in $N[\hat{G}*\hat{H}]$ and $\kappa\in \Tr_0(j(C))$, it follows that $C_\kappa=\bar{C}_\kappa(j(\kappa))=j(C)\cap\kappa=C$, a contradiction.
\end{proof}

\begin{remark}\label{rem_groundmodelGenericBreaksCorrectness}
Observe that in the proof of Theorem \ref{theorem_forcing_1square_at_a_weakly_compact}, if we assume that $\kappa$ is $\Pi^1_2$-indescribable and that the target $N$ of the embedding $j:M\to N$ we start with is $\Pi^1_1$-correct, then the $\kappa$-model $N[G*H]$ from the proof of Theorem~\ref{theorem_forcing_1square_at_a_weakly_compact} is $\Pi^1_1$-correct by Theorem~\ref{theorem_easton_iterations_preserve_correctness} and Corollary~\ref{corollary_closed_forcing_preserves_correctness}. However, the $\kappa$-model $N[G*H*h]$ cannot be $\Pi^1_1$-correct because otherwise we would have shown that, in the extension $V[G*H]$, $\kappa$ is $\Pi^1_2$-indescribable, contradicting Proposition~\ref{proposition_n_square_denies_n_plus_1_indescribability}. Thus, a forcing extension of a $\Pi^1_1$-correct $\kappa$-model, even by a $\kappa$-strategically closed forcing notion, need not be $\Pi^1_1$-correct
if the generic filter is not fully $V$-generic.
\end{remark}

For the next theorem, let us recall what it means for a cardinal $\kappa$ to be $\alpha$-weakly compact, where $\alpha\leq\kappa^+$. Suppose $\kappa$ is a weakly compact cardinal. It is not difficult to see that if sets $X, Y\in P(\kappa)$ are equivalent modulo the ideal $\Pi^1_1(\kappa)$, then their traces $\Tr_1(X)$ and $\Tr_1(Y)$ are equivalent as well. Thus, the trace operation $\Tr_1:P(\kappa)\to P(\kappa)$ leads to a well defined operation $\Tr_1:P(\kappa)/\Pi^1_1(\kappa)\to P(\kappa)/\Pi^1_1(\kappa)$ on the collection $P(\kappa)/\Pi^1_1(\kappa)$ of equivalence classes of subsets of $\kappa$ modulo the ideal $\Pi^1_1(\kappa)$. By taking diagonal intersections at limit ordinals, we can iterate the trace operation on the equivalence classes $\kappa^+$-many times.
To be more precise, fix a sequence $\langle e_\beta \mid \kappa \leq \beta < \kappa^+, ~ \beta \text{ limit} \rangle$, where $e_\beta : \kappa \rightarrow \beta$
is a bijection for all relevant $\beta$. To start, let $\Tr_1^1 = \Tr_1$. Given $\alpha < \kappa^+$, if $\Tr_1^\alpha : P(\kappa)/\Pi^1_1(\kappa) \to P(\kappa)/\Pi^1_1(\kappa)$
has been defined, let $\Tr_1^{\alpha + 1} = \Tr_1 \circ \Tr_1^\alpha$. If $\beta < \kappa$ is a limit ordinal and $\Tr_1^\alpha$ has been defined for all $\alpha < \beta$,
then define $\Tr_1^\beta$ by letting $\Tr_1^\beta([S]) = [\bigcap_{\alpha < \beta} S_\alpha]$, where $S_\alpha$ is a representative
element of $\Tr_1^\alpha([S])$ for all $\alpha < \beta$. Finally, if $\beta$ is a limit ordinal and $\kappa \leq \beta < \kappa^+$,
then let $\Tr_1^\beta([S]) = [\triangle_{\eta < \kappa}S_{e_\beta(\eta)}]$.

It is straightforward to verify that each of these
functions is well-defined and does not depend on our choice of $e_\beta$ for limit $\beta$.
For $\alpha < \kappa^+$, the cardinal $\kappa$ is then said to be $\alpha$-\emph{weakly compact} if $\Tr_1^\alpha([\kappa]) \neq [\emptyset]$,
and $\kappa$ is $\kappa^+$-\emph{weakly compact} if it is $\alpha$-weakly compact for all $\alpha < \kappa^+$.
For more details, the reader is referred to \cite{MR3985624}.

If we start with a $\kappa^+$-weakly compact cardinal $\kappa$ in Theorem \ref{theorem_forcing_1square_at_a_weakly_compact}, then it will remain $\kappa^+$-weakly compact in the extension $V[G*H]$. Because weakly compact subsets of all cardinals $\gamma\leq\kappa$ are preserved to $V[G*H]$, it is easy to show by induction on $\alpha\leq\kappa^+$ that if a set $X$ is in the equivalence class $\Tr^\alpha([\kappa])$ as computed in $V$, then the equivalence class $\Tr^\alpha([\kappa])$ as computed in $V[G*H]$ contains some $Y\supseteq X$. Thus, we get the following.

\begin{theorem_1_square_wc}
If $\kappa$ is $\kappa^+$-weakly compact and $\GCH$ holds then there is a cofinality preserving forcing extension in which
\begin{enumerate}
\item $\kappa$ remains $\kappa^+$-weakly compact and
\item $\square_1(\kappa)$ holds.
\end{enumerate}
\end{theorem_1_square_wc}

Next we will show that, if $\kappa$ is Mahlo, one can characterize precisely when $\square_1(\kappa)$ holds after forcing with $\Q(\kappa)$.
Notice that, if there is a stationary subset of $\kappa$ that does not reflect at an inaccessible cardinal (i.e., if $\Refl_0(\kappa)$) fails, then $\square_1(\kappa)$
must fail, since any such non-reflecting stationary set of $\kappa$ would then be a thread through
any coherent sequence of $1$-clubs of length $\kappa$. We will see in Theorem~\ref{theorem_reflection_characterization}
that $\Refl_0(\kappa)$ holding in the extension by $\Q(\kappa)$ is in fact sufficient for $\square_1(\kappa)$ to hold.
First, we need the following general proposition. Recall that $\Refl_n(\kappa)$ holds if and only if $\kappa$ is $\Pi^1_n$-indescribable and,
for every $\Pi^1_n$-indescribable subset $S$ of $\kappa$, there is an $\alpha<\kappa$ such that $S\cap \alpha$ is $\Pi^1_n$-indescribable.

\begin{proposition}\label{proposition_reflection}
Fix $n < \omega$. If $\kappa$ is a cardinal, $\Refl_n(\kappa)$ holds and $S\in\Pi^1_n(\kappa)^+$, then the set
\[T=\{\alpha<\kappa\st (\text{$S\cap\alpha\in\Pi^1_n(\alpha)^+$}) \land (\text{$\Refl_n(\alpha)$ fails}) \}\]
is $\Pi^1_n$-indescribable.
\end{proposition}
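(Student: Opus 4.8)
The plan is to argue by contradiction using Hellsten's characterization of $\Pi^1_n$-indescribable sets, together with one auxiliary fact about the reflection-closure of the trace operation. Since $\Refl_n(\kappa)$ holds, $\kappa$ is $\Pi^1_n$-indescribable, so by Hellsten's theorem it suffices to show that $T$ meets every $n$-club subset of $\kappa$; equivalently, we must rule out the existence of an $n$-club $C \subseteq \kappa$ with $C \cap T = \emptyset$.

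The auxiliary fact is: \emph{if $\mu$ is a cardinal for which $\Refl_n(\mu)$ holds and $X \in \Pi^1_n(\mu)^+$, then $\Tr_n(X) \in \Pi^1_n(\mu)^+$} (here and below, when we apply $\Tr_n$ ``at $\mu$'' to a subset $Y$ of $\mu$ we mean $\{\gamma < \mu \st Y \cap \gamma \in \Pi^1_n(\gamma)^+\}$). To prove this, fix an $n$-club $D \subseteq \mu$; since $\mu$ is $\Pi^1_n$-indescribable, by Hellsten's theorem it is enough to find an element of $D \cap \Tr_n(X)$. As $n$-clubs generate the filter $\Pi^1_n(\mu)^*$, the set $X \cap D$ is $\Pi^1_n$-indescribable, so by $\Refl_n(\mu)$ there is $\gamma < \mu$ with $(X \cap D) \cap \gamma \in \Pi^1_n(\gamma)^+$. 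Then $X \cap \gamma \in \Pi^1_n(\gamma)^+$, so $\gamma \in \Tr_n(X)$; and $D \cap \gamma \in \Pi^1_n(\gamma)^+ \subseteq \Pi^1_{n-1}(\gamma)^+$, so $\gamma \in \Tr_{n-1}(D)$ and hence $\gamma \in D$ by the $n$-closure of $D$. Thus $\gamma \in D \cap \Tr_n(X)$.

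Now suppose toward a contradiction that $C \subseteq \kappa$ is an $n$-club with $C \cap T = \emptyset$. Since $S \cap C \in \Pi^1_n(\kappa)^+$, the auxiliary fact applied at $\kappa$ gives $\Tr_n(S \cap C) \in \Pi^1_n(\kappa)^+$, so $\Tr_n(S \cap C)$ meets the $n$-club $C$; let $\beta$ be the least element of $C \cap \Tr_n(S \cap C)$, so that $(S \cap C) \cap \beta \in \Pi^1_n(\beta)^+$. In particular $S \cap \beta \in \Pi^1_n(\beta)^+$, so $\beta$ is $\Pi^1_n$-indescribable; and since $\beta \in C$ while $C \cap T = \emptyset$, the definition of $T$ forces $\Refl_n(\beta)$ to hold. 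Moreover $C \cap \beta \supseteq (S \cap C) \cap \beta$ is $\Pi^1_{n-1}$-indescribable in $\beta$, i.e., $\beta \in \Tr_{n-1}(C)$, and $C \cap \beta$ is $n$-closed in $\beta$ because $C$ is $n$-closed; hence $C \cap \beta$ is an $n$-club in $\beta$.

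Finally, apply the auxiliary fact inside $\beta$ to $X = (S \cap C) \cap \beta$: since $\Refl_n(\beta)$ holds and $(S \cap C) \cap \beta \in \Pi^1_n(\beta)^+$, the trace of $(S \cap C) \cap \beta$ computed at $\beta$ is $\Pi^1_n$-indescribable in $\beta$. This trace is precisely $\Tr_n(S \cap C) \cap \beta$, so it meets the $n$-club $C \cap \beta$, yielding some $\gamma \in C \cap \Tr_n(S \cap C)$ with $\gamma < \beta$ --- contradicting the minimality of $\beta$. Hence no such $C$ exists and $T$ is $\Pi^1_n$-indescribable. The main point to get right is the double use of the auxiliary fact (once at $\kappa$ to produce a reflection point $\beta \in C$ of $S \cap C$, once at $\beta$ to produce a strictly smaller one), which replaces what would otherwise be an induction on $\kappa$; the only other thing needing care is the verification that $C \cap \beta$ is genuinely an $n$-club in $\beta$, which follows because $(S \cap C) \cap \beta$ being $\Pi^1_n$-indescribable in $\beta$ automatically witnesses $\beta \in \Tr_{n-1}(C)$.
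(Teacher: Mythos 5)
Your proof is correct, and the engine driving it is the same one the paper uses: reflect the $\Pi^1_n$-indescribable set $S\cap C$ via $\Refl_n(\kappa)$, observe that the reflection point lands in $C$ by $n$-closure, and then descend once more at any such point where $\Refl_n$ holds. The difference is organizational. The paper runs an induction on the cardinal $\kappa$: having fixed the $n$-club $C$, it produces $\alpha_0\in C\cap\Tr_n(S)$, and if $\Refl_n(\alpha_0)$ holds it invokes the induction hypothesis at $\alpha_0$ with the set $S\cap\alpha_0$ and the $n$-club $C\cap\alpha_0$ to land in $T\cap C$. You avoid the induction entirely by (i) isolating the auxiliary fact that $\Refl_n(\mu)$ together with $X\in\Pi^1_n(\mu)^+$ implies $\Tr_n(X)\in\Pi^1_n(\mu)^+$ (a $\Refl_n$-based analogue of Corollary~\ref{cor_trace_n-1_indescribable}, which obtains positivity of $\Tr_{n-1}$ from $\Pi^1_n$-indescribability), and (ii) taking the \emph{least} element $\beta$ of $C\cap\Tr_n(S\cap C)$ and showing that $\Refl_n(\beta)$ can neither fail (else $\beta\in T\cap C$, contradicting the assumed disjointness) nor hold (else the auxiliary fact at $\beta$, applied to $(S\cap C)\cap\beta$ and the $n$-club $C\cap\beta$, produces a smaller element of $C\cap\Tr_n(S\cap C)$). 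Your verifications of the side conditions --- that $C\cap\beta$ is an $n$-club in $\beta$, that the trace computed at $\beta$ is $\Tr_n(S\cap C)\cap\beta$, and that Hellsten's characterization is available at $\beta$ because $\Refl_n(\beta)$ includes the $\Pi^1_n$-indescribability of $\beta$ --- are all in order. What your packaging buys is a self-contained, reusable lemma about the trace operation and a minimal-counterexample argument in place of an external induction over all cardinals; what the paper's induction buys is brevity, since the induction hypothesis does the second descent step in one stroke.
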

\begin{proof}
We proceed by induction on $\kappa$. Suppose the proposition holds for all cardinals $\alpha < \kappa$, $\Refl_n(\kappa)$ holds and
$S \in \Pi^1_n(\kappa)^+$. It suffices to show that $T\cap C\neq\emptyset$ for every $n$-club subset $C$ of $\kappa$.
Fix an $n$-club set $C$ and note that $S\cap C$ is $\Pi^1_n$-indescribable. Thus, by $\Refl_n(\kappa)$, there is some $\alpha_0<\kappa$ such that $S\cap C\cap\alpha_0\in \Pi^1_n(\alpha_0)^+$. It follows that $\alpha_0\in \Tr_n(S)$, but also $\alpha_0\in C$ because $C$ contains all of its $\Pi^1_{n-1}$-reflection points. If $\alpha_0\in T$, we have shown that $T\cap C\neq\emptyset$.
So suppose that $\alpha_0\notin T$, so $\Refl_{n}(\alpha_0)$ holds. We can now appeal to the inductive hypothesis
at $\alpha_0$, applied to the $\Pi^1_n$-indescribable set $S \cap \alpha_0$ and the $n$-club $C \cap \alpha_0$,
to find a cardinal $\alpha_1 \in T \cap C$.
\end{proof}


\begin{theorem}\label{theorem_reflection_characterization}
  Suppose $\kappa$ is Mahlo and $p \in \Q(\kappa)$. The following are equivalent:
  \begin{enumerate}
    \item $p \Vdash_{\Q(\kappa)} \Refl_0(\kappa)$
    \item $p \Vdash_{\Q(\kappa)} \square_1(\kappa)$
  \end{enumerate}
\end{theorem}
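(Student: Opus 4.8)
The plan is to prove the two implications separately; $(2)\Rightarrow(1)$ is quick, and essentially all the work is in $(1)\Rightarrow(2)$.

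For $(2)\Rightarrow(1)$ I would argue that, in any model, $\square_1(\kappa)$ implies $\Refl_0(\kappa)$. Indeed, if $\Refl_0(\kappa)$ fails, fix a stationary $S\subseteq\kappa$ with no inaccessible stationary reflection point. Then $S$ is stationary and vacuously $1$-closed, hence a $1$-club, and since $S\cap\alpha$ is never stationary at an inaccessible $\alpha$, the defining clause of a thread is vacuously satisfied; thus $S$ threads \emph{every} coherent sequence of $1$-clubs and $\square_1(\kappa)$ fails. Applying this inside $V^{\Q(\kappa)}$ gives at once that $p\Vdash_{\Q(\kappa)}\square_1(\kappa)$ implies $p\Vdash_{\Q(\kappa)}\Refl_0(\kappa)$.

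For $(1)\Rightarrow(2)$, assume $p\Vdash_{\Q(\kappa)}\Refl_0(\kappa)$; I claim that $p$ forces the generic coherent sequence of $1$-clubs $\vec{C}(\kappa)=\bigcup\dot G$ to be a $\square_1(\kappa)$-sequence. Since $\Q(\kappa)$ is ${<}\kappa$-distributive (Proposition~\ref{prop_Q(kappa)stratclosed}), $V$ and $V^{\Q(\kappa)}$ have the same $V_\kappa$, so for $\gamma<\kappa$ the statements ``$\gamma$ is inaccessible'', ``$C$ is club/stationary in $\gamma$'' and ``$\Refl_0(\gamma)$'' are absolute, and every bounded subset of $\kappa$ in the extension lies in $V$. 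Suppose toward a contradiction that $q\le p$ forces ``$\dot C$ is a $1$-club thread through $\vec{C}(\kappa)$''. In a generic extension $V[G]$ with $q\in G$, writing $C=\dot C^G$: since $C$ is stationary, $\Refl_0(\kappa)$ makes $\Tr_0(C)$ stationary, the thread property gives $C_\gamma(\kappa)=C\cap\gamma$ for every $\gamma\in\Tr_0(C)$, and Proposition~\ref{proposition_reflection} (with $n=0$ and $S=C$) makes $T=\{\gamma<\kappa : C\cap\gamma\in\Pi^1_0(\gamma)^+\ \text{and}\ \Refl_0(\gamma)\ \text{fails}\}$ stationary, with each $\gamma\in T$ admitting a non-reflecting stationary $D\subseteq\gamma$ lying in $V$.

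The contradiction is obtained by a fusion in $V$ below $q$. Using Player II's winning strategy witnessing the $\kappa$-strategic closure of $\Q(\kappa)$, together with names for elements of $\dot T$ and for $\min(\Tr_0(\dot C))$, I would build a continuous, strictly increasing sequence $\langle q_\xi : \xi\le\delta\rangle$ of conditions below $q$, with $\gamma_\xi:=\gamma^{q_\xi}$, for a suitable limit $\delta<\kappa$ with $\gamma_\delta$ inaccessible, arranged so that $q_\delta\Vdash\gamma_\delta\in\Tr_0(\dot C)$; $q_\delta$ forces that $\dot C\cap\gamma_\delta$ reflects at some inaccessible below $\gamma_\delta$; and at stage $\delta$ one is still free to set the \emph{fresh} top coordinate $C^{q_\delta}_{\gamma_\delta}$ equal to a non-reflecting stationary subset $D$ of $\gamma_\delta$. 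Such a $D$ is a legitimate $1$-club coordinate cohering vacuously with $q_\delta\restriction\gamma_\delta$, since it has no inaccessible reflection points. But then $q_\delta\Vdash C_{\gamma_\delta}(\kappa)=C^{q_\delta}_{\gamma_\delta}=D$, whereas the thread property forces $C_{\gamma_\delta}(\kappa)=\dot C\cap\gamma_\delta$; since $q_\delta$ also forces $\dot C\cap\gamma_\delta\neq D$ (the former reflects at an inaccessible, the latter does not), this is a contradiction. I expect the fusion to be the main obstacle, specifically reconciling two demands that pull against each other at $\gamma_\delta$: making the thread property bite requires forcing $\dot C\cap\gamma_\delta$ stationary in $\gamma_\delta$, while installing a \emph{non-reflecting} coordinate there requires $\Refl_0(\gamma_\delta)$ to fail and no earlier $q_\xi$ to have committed the $\gamma_\delta$-th coordinate; since $\Tr_0(\dot C)$ consists only of inaccessibles, the stationarity of $\dot C\cap\gamma_\delta$ cannot be arranged merely by driving the $\gamma_\xi$ into $\dot C$, so the construction must use the strategic closure and the stationarity of $\dot T$ to land $\gamma_\delta$ on a reflection point of $\dot C$ at which $\Refl_0$ nonetheless fails, all while the top coordinate there is created only at the final stage. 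Getting this bookkeeping right is the technical crux.
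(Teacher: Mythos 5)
Your direction $(2)\Rightarrow(1)$ and the final contradiction you aim for in $(1)\Rightarrow(2)$ --- installing a non-reflecting stationary $D$ as a \emph{fresh} top coordinate at an inaccessible $\gamma$ where $\dot C\cap\gamma$ is forced to be stationary and $\Refl_0(\gamma)$ fails --- are exactly the paper's. The problem is the step you yourself flag as the ``technical crux'': you do not say how a fusion carried out in $V$ along Player II's strategy can land on such a $\gamma_\delta$, and it is not clear that it can. For the thread property to bite at $\gamma_\delta$, the set $\dot C\cap\gamma_\delta$ must be decided, and decided to be \emph{stationary in} $\gamma_\delta$, by conditions whose suprema stay strictly below $\gamma_\delta$ (otherwise the $\gamma_\delta$-coordinate is already committed and cannot be replaced by $D$). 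But a condition forcing $\gamma_\delta\in\Tr_0(\dot C)$ will in general have length at least $\gamma_\delta$, while along a run built in $V$ there is no guarantee that the union of the decided initial segments of $\dot C$ is stationary at any closure point of the run, let alone at one where $\Refl_0$ fails. Appealing to the stationarity of $\dot T$ does not repair this: that is a statement about the extension, and a condition witnessing $\dot T\cap\check E\neq\emptyset$ for the club $E$ of suprema of your run need not be compatible with the run below the relevant point, so it cannot be merged with the part of the construction you want to keep. So this is a genuinely missing idea, not bookkeeping.

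The paper resolves it by working in $V[G]$ rather than in $V$, with no strategy at all. By ${<}\kappa$-distributivity, every ${<}\kappa$-sequence of elements of $G$ lies in $V$ and hence has a lower bound in $G$, and every initial segment $C\cap\gamma$ of the thread lies in $V$. One then chooses, in $V[G]$, a strictly decreasing sequence $\langle q_\alpha \st \alpha<\kappa\rangle$ of conditions \emph{from $G$} with $\{\gamma^{q_\alpha} \st \alpha<\kappa\}$ a club and $q_{\alpha+1}\forces \dot C\cap\gamma^{q_\alpha}=\check C\cap\gamma^{q_\alpha}$; since $C$ really is stationary in $V[G]$ and $\Refl_0(\kappa)$ holds there, Proposition~\ref{proposition_reflection} supplies an inaccessible $\lambda$ in that club with $C\cap\lambda$ stationary and $\Refl_0(\lambda)$ failing. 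The condition $q_\lambda^{*}$ obtained from $\bigcup_{\alpha<\lambda}q_\alpha$ by adding as $\lambda$-th coordinate a non-reflecting stationary $C_\lambda\neq C\cap\lambda$ (which exists in $V$ by absoluteness) is an element of $\Q(\kappa)$ in $V$, lies below all $q_\alpha$, and so forces the $\lambda$-th entry of $\bigcup\dot G$ to be both $C_\lambda$ and, via the thread property and the stationarity of $C\cap\lambda$, equal to $C\cap\lambda$ --- your intended contradiction. The decisive device is thus picking the decreasing sequence inside $G$ and only at the end observing that the modified condition lives in $V$; making your purely ground-model, strategy-guided fusion work instead would require an additional argument that the proposal does not supply.
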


\begin{proof}
  The implication $(2) \Rightarrow (1)$ follows immediately from the observation
  that a stationary subset of $\kappa$ that does not reflect at any inaccessible
  cardinal is a thread through any putative $\square_1(\kappa)$-sequence.

  We now show $(1) \Rightarrow (2)$. Suppose for the sake of contradiction
  that $p \Vdash_{\Q(\kappa)} \Refl_0(\kappa)$ and there is $p_1 \leq_{\Q(\kappa)} p$ such that
  $p_1 \Vdash_{\Q(\kappa)} \neg \square_1(\kappa)$. In particular, $p_1$ forces
  that $\bigcup \dot{G}$ is not a $\square_1(\kappa)$-sequence, so there is a
  $\Q(\kappa)$-name $\dot{C}$ that is forced by $p_1$ to be a thread through
  $\bigcup \dot{G}$.

  Let $G$ be $\Q(\kappa)$-generic over $V$ with $p_1 \in G$, and move to $V[G]$.
  Let $C = \dot{C}_G$. Since $C$ is stationary
  in $\kappa$ and $\Refl_0(\kappa)$ holds, Proposition \ref{proposition_reflection} implies
  that there are stationarily many inaccessible $\lambda < \kappa$ such that
  $C$ reflects at $\lambda$ and $\Refl_0(\lambda)$ fails.

	Next, observe that every sequence of elements of $G$ of size less than $\kappa$ has a lower bound in $G$.
	Suppose that $\beta < \kappa$, and fix in $V[G]$ a sequence $\vec p=\la p_\xi\st \xi<\beta\ra$ of elements of $G$.
	The sequence $\vec p$ must be in $V$ by the $\lt\kappa$-distributivity of $\Q(\kappa)$, and so there is a
	condition $p\in G$ forcing that $\vec p$ is contained in $G$. But then $p$ is a lower bound for $\vec p$.
	Observe also that, for all $\gamma < \kappa$, the initial segment $C^{(\gamma)}=C\cap\gamma$ of $C$ is in $V$.

Now, in $V[G]$, we build a strictly decreasing sequence of conditions $\langle q_\alpha \st \alpha < \kappa \rangle$ from $G$ such that
  \begin{enumerate}
    \item $q_0 = p_1$,
    \item $\{\gamma^{q_\alpha} \st \alpha < \kappa\}$, the set of suprema of the domains of the conditions, is a club and
    \item for all $\alpha < \kappa$, $q_{\alpha + 1} \Vdash_{\Q(\kappa)} \dot{C}
      \cap \gamma^{q_\alpha} = \check C^{(\alpha)}.$
  \end{enumerate}
We can ensure that (2) holds as follows. At a limit stage $\lambda < \kappa$, given that we have already constructed $\la q_\alpha\st\alpha<\lambda\ra$, we know that there is some $q\in G$ below our sequence. So we let $\gamma_\lambda=\bigcup_{\alpha<\lambda}\gamma_\alpha$ and take $q_\lambda=q\restrict\gamma_\lambda+1$.

Thus, we can find an inaccessible cardinal $\lambda$ such that $\lambda = \gamma^{q_\lambda}$, $C$ reflects at $\lambda$, and $\Refl_0(\lambda)$ fails. Since $\Refl_0(\lambda)$ fails (in $V[G]$ and hence also in $V$, since forcing with $\Q(\kappa)$ did not add any bounded subsets to $\kappa$), we can fix in $V$ a stationary $C_\lambda \subseteq \lambda$ that is different from $C^{(\lambda)}=C \cap \lambda$ and that does not reflect at any inaccessible cardinal below $\lambda$.
  Now form a condition $q_\lambda^* \in \mathbb{Q}$ with $\gamma^{q_\lambda^*} =
  \gamma^{q_\lambda} = \lambda$ by letting $q_\lambda^* \restriction \lambda =
  \bigcup_{\alpha < \lambda} q_\alpha$ and $C^{q_\lambda^*}_\lambda = C_\lambda$.
  This is easily seen to be a valid condition, because everything needed to
  construct it is in $V$ and since $C_\lambda$ does not reflect at any
  inaccessible cardinal. Since $q_\lambda^* \leq_{\Q(\kappa)} q_\alpha$ for all
  $\alpha < \lambda$, we have $$q_\lambda^* \Vdash_{\Q(\kappa)}\dot{C} \cap \lambda
  = \check C^{(\lambda)}.$$ In particular, since $C^{(\gamma)}=C \cap \lambda$ is stationary in
  $\lambda$, and since $q_\lambda^*$ extends $p_1$ and thus forces that
  $\dot{C}$ is a thread through $\bigcup \dot{G}$, it must be the case that
  $q_\lambda^*$ forces that the $\lambda$-th entry in $\bigcup \dot{G}$
  is $C^{(\lambda)}$. However, $q_\lambda^*$ forces the $\lambda$-th
  entry in $\bigcup \dot{G}$ to be $C_\lambda$, which is different from
  $C \cap \lambda$. This gives the desired contradiction.
\end{proof}

\begin{remark}
	Since the weak compactness of $\kappa$ implies $\Refl_0(\kappa)$, by Theorem \ref{theorem_reflection_characterization} it follows that in the proof of Theorem \ref{theorem_forcing_1square_at_a_weakly_compact}, in order to show that $\square_1(\kappa)$ holds in $V[G*H]$ it suffices to show that $\kappa$ remains weakly compact.
\end{remark}

\section{Consistency of $\square_1(\kappa)$ with $\Refl_1(\kappa)$}\label{sec_squareAndReflection}
In this section, we will show that the principle $\square_1(\kappa)$ is consistent with $\Refl_1(\kappa)$. First, we will need a lemma showing that we can force the existence of a fast function while preserving $\Pi^1_2$-indescribability.

The fast function forcing $\F_\kappa$, introduced by Woodin, consists of conditions that are partial functions $p\from\kappa\to\kappa$ such that for every $\gamma\in\dom(p)$,
the following conditions hold:
\begin{itemize}
	\item $\gamma$ is inaccessible,
	\item $p \image \gamma \subseteq \gamma$, and
	\item $|p \restrict \gamma| < \gamma$.
\end{itemize}
The union $f\from\kappa\to\kappa$ of a generic filter for $\F_\kappa$ is called a \emph{fast function}. Let $\F_{[\gamma,\kappa)}$ denote the subset of $\F_\kappa$ consisting of conditions $p$ with $\dom(p)\subseteq [\gamma,\kappa)$ and observe that $\F_{[\gamma,\kappa)}$ is $\lesseq\gamma$-closed. It is not difficult to see that for any condition $p\in\F_\kappa$ and $\gamma\in\dom(p)$, the forcing $\F_\kappa$ factors below $p$ as
\[
	\F_\gamma\restrict p\cong \F_\kappa\restrict (p\restrict \gamma)\times \F_{[\gamma,\kappa)}\restrict ( p \restrict [\gamma,\kappa)).
\]
\begin{lemma}\label{fast_function_lemma}
Suppose $\kappa$ is $\Pi^1_n$-indescribable. In a generic extension $V[f]$ by fast function forcing, $\kappa$ remains $\Pi^1_n$-indescribable and the fast function $f$ has the following property. For every $A \in H(\kappa^+)$ and $\alpha<\kappa^+$, there are a $\kappa$-model $M$ with $f,A\in M$, a $\Pi^1_{n-1}$-correct $\kappa$-model $N$ and an elementary embedding $j:M\to N$ with critical point $\kappa$ such that $j(f)(\kappa)=\alpha$ and $j,M\in N$.
\end{lemma}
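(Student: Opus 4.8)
The plan is to prove this by a fairly standard "lifting along fast function forcing" argument, adapting the classical fact that fast function forcing preserves measurable (and many larger) cardinals and equips the target of the embedding with a prescribed value at $\kappa$. The key structural facts we will exploit are the factorization $\F_\kappa \restrict p \cong \F_\kappa \restrict (p\restrict\gamma) \times \F_{[\gamma,\kappa)}\restrict(p\restrict[\gamma,\kappa))$ recorded above, together with the $\lesseq\gamma$-closure of the upper part $\F_{[\gamma,\kappa)}$, and Hauser's characterization of $\Pi^1_n$-indescribability (Theorem~\ref{theorem_hauser}), in particular clause (4) which gives $j,M\in N$. We will also use Theorem~\ref{theorem_easton_iterations_preserve_correctness} or rather Corollary~\ref{corollary_closed_forcing_preserves_correctness} to see that $\Pi^1_{n-1}$-correctness of the target model is preserved under the closed tail forcing; note $\F_\kappa$ itself, while not $\lesseq\kappa$-closed, is $\kappa$-c.c.\ of size $\kappa$ below a Mahlo cardinal.

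First I would fix $A\in H(\kappa^+)^{V[f]}$ and $\alpha<\kappa^+$, with an $\F_\kappa$-name $\dot A$. Working in $V$, since $\kappa$ is $\Pi^1_n$-indescribable, apply Theorem~\ref{theorem_hauser}(4) with the parameter $\dot A$ (and a name for $\F_\kappa$, and a function coding $\alpha$ as a subset of $\kappa$, etc.) to obtain a $\kappa$-model $M_0$, a $\Pi^1_{n-1}$-correct $\kappa$-model $N_0$, and $j_0:M_0\to N_0$ with $\crit(j_0)=\kappa$ and $j_0,M_0\in N_0$. Now the point is that $j_0(\F_\kappa)$, as computed in $N_0$, factors as $\F_\kappa * \dot{\F}_{[\kappa,j_0(\kappa))}$, where the tail is (forced to be) $\lesseq\kappa$-closed in $N_0[G]$ for $G\subseteq\F_\kappa$ generic over $V$. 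The crucial move: we want $j_0(f)(\kappa)=\alpha$, so we should first build the $V$-generic $G$ for $\F_\kappa$ in such a way that we can take a condition in the tail forcing whose first coordinate (at the ordinal $\kappa$, which is inaccessible in $N_0$) is the pair sending $\kappa\mapsto\alpha$ — this requires $j_0\image G$ to be compatible with such a condition, which is automatic because conditions in $\F_\kappa$ have bounded support below $\kappa$, so $j_0\image G$ generates a filter on $j_0(\F_\kappa)$ all of whose members live below $\kappa$, hence are compatible with the single condition $\{(\kappa,\alpha)\}$ in the tail. Using $\lesseq\kappa$-closure of the tail in $N_0[G]$ (and the fact that $N_0[G]$ is a $\kappa$-model by the generic closure criterion, Lemma~\ref{lemma_genericClosureCriterion}), diagonalize to build $G_{\mathrm{tail}}\in V[G]$ generic for $\F_{[\kappa,j_0(\kappa))}$ over $N_0[G]$ with $\{(\kappa,\alpha)\}\in G_{\mathrm{tail}}$. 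Then $j_0$ lifts to $j:M_0[G]\to N_0[G*G_{\mathrm{tail}}]$ with $j(f)(\kappa)=\alpha$, where $f=\bigcup G$.

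It then remains to verify the bookkeeping. Set $M = M_0[G]$, $N = N_0[G*G_{\mathrm{tail}}]$; then $f,A\in M$ provided $\dot A\in M_0$, $M$ is a $\kappa$-model (ground closure criterion, Lemma~\ref{lemma_groundClosureCriterion}), and $N$ is a $\kappa$-model in $V[f]$ since $N_0[G]$ is and the tail is $\lesseq\kappa$-closed. For $\Pi^1_{n-1}$-correctness of $N$: $N_0[G]$ is $\Pi^1_{n-1}$-correct in $V[G]$ by Corollary~\ref{corollary_closed_forcing_preserves_correctness} (here using $n-1\geq 0$; the case $n=0$ is trivial and the case $n\geq 1$ is what matters), and then since the further forcing $\F_{[\kappa,j_0(\kappa))}$ is $\lesseq\kappa$-closed it adds no new subsets of $V_\kappa$ over $N_0[G]$, so $N_0[G]$ and $N$ have the same $V_\kappa$ and the same subsets of $V_\kappa$, whence $N$ is $\Pi^1_{n-1}$-correct too. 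Finally, $j,M\in N$: we have $j_0,M_0\in N_0$; $G\in N_0[G]\subseteq N$; $G_{\mathrm{tail}}\in N$ since it was built inside $N_0[G]$'s successor extension — more carefully, one arranges the diagonalization to take place in a way visible to $N$, or one simply observes that $G_{\mathrm{tail}}$ is an element of $V[G]$ of size $<j_0(\kappa)$ coded as a subset of $j_0(\kappa)$ and adjusts the model $N$ (replacing it by $N_0[G*G_{\mathrm{tail}}]$ as above already has it as an element). The lifted embedding $j$ is then definable in $N$ from $j_0$, $G$, $G_{\mathrm{tail}}$, and $f$, all of which are in $N$, so $j,M\in N$.

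The main obstacle — and the step requiring the most care — is the simultaneous maintenance of $\Pi^1_{n-1}$-correctness of the target model through \emph{two} forcing steps ($\F_\kappa$ over $V$, which is only $\kappa$-c.c., not closed; then the closed tail), while also ensuring $j_0,M_0\in N_0$ survives and that the value $j_0(f)(\kappa)=\alpha$ can genuinely be forced. The correctness-through-$\F_\kappa$ part is not covered verbatim by Corollary~\ref{corollary_closed_forcing_preserves_correctness} (which is about $<\kappa$-distributive forcing); one needs the analogue for $\kappa$-c.c.\ forcing of size $\kappa$, which follows by the same nice-name argument as in the proof of Theorem~\ref{theorem_easton_iterations_preserve_correctness} (indeed $\F_\kappa$ below a condition is a projection/subforcing of an Easton-type iteration, or one argues directly). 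Everything else is routine diagonalization and the standard master-condition-free lifting permitted by bounded supports.
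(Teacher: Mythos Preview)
Your overall strategy is the same as the paper's, but there is a genuine gap: you assert that $\F_\kappa$ is $\kappa$-c.c., and this is false. For any fixed inaccessible $\gamma_0<\kappa$, the conditions $\{(\gamma_0,\alpha)\}$ for $\alpha<\kappa$ form an antichain of size $\kappa$ (the requirement $p\image\gamma\subseteq\gamma$ only applies to $\gamma\in\dom(p)$, and $p\image\gamma_0=\emptyset$ here). Consequently your invocations of the generic closure criterion (Lemma~\ref{lemma_genericClosureCriterion}) to conclude that $M_0[G]$ and $N_0[G]$ are $\kappa$-models in $V[G]$ are unjustified; likewise your appeal to the ground closure criterion for $M=M_0[G]$ is misplaced, since $G\notin V$.

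The paper confronts this explicitly: ``The argument to show that $M[f]$ is a $\kappa$-model in $V[f]$ will be more involved than usual because, as $\F_\kappa$ is not $\kappa$-c.c., we cannot apply the generic closure criterion.'' The repair is to use the factorization you mentioned but did not exploit for this purpose. Given $\beta<\kappa$, by density there is $p\in G$ with some inaccessible $\gamma>\beta$ in $\dom(p)$ and $p(\gamma)=\delta>\gamma$; below $p$ the forcing splits as $\F_\gamma\times\F_{(\delta,\kappa)}$, where the first factor has size $\gamma$ (hence is $\gamma^+$-c.c.) and the second is $\lesseq\gamma$-closed. Now the generic closure criterion applies to the first factor, closure handles the second, and one concludes $M[f]^\beta\subseteq M[f]$ in $V[f]$. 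The same argument works for $N[f]$. For $\Pi^1_{n-1}$-correctness of $N[f]$, the paper does not use a chain condition either: it runs the nice-name argument of Theorem~\ref{theorem_easton_iterations_preserve_correctness} directly, using only that $\F_\kappa\subseteq V_\kappa$ and that $V_\kappa^{V[f]}=V_\kappa^{N[f]}=V_\kappa[f]$ (the latter again via factorization). Once you replace the erroneous $\kappa$-c.c.\ claim with these factorization arguments, the rest of your outline matches the paper's proof.
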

\begin{proof}
The cardinal $\kappa$ remains inaccessible in $V[f]$ because for unboundedly many inaccessible $\alpha<\kappa$, there is a condition $p\in G$ with $\alpha\in\dom p$, so
$\mathbb F_\kappa$ below $p$ factors with a first factor of size $\alpha$ and a second factor that is $\lesseq\alpha$-closed.

Fix $A\in H(\kappa^+)^{V[f]}$ and $\alpha<\kappa^+$ (note that $V$ and $V[f]$ have the same $\kappa^+$). Let $\dot A$ be an $\F_{\kappa}$-name for $A$ and let $B\subseteq\kappa$ code $\alpha$. By Theorem~\ref{theorem_hauser}~(4), there are a $\kappa$-model $M$ with $\F_\kappa,\dot A,B\in M$, a $\Pi^1_{n-1}$-correct $\kappa$-model $N$ and an elementary embedding $j:M\to N$ with critical point $\kappa$ such that $j,M\in N$. We will lift $j$ to $M[G]$. Let $p=\la {\kappa,\alpha}\ra$ be a condition in $j(\F_{\kappa})$. Below $p$, $j(\F_\kappa)$ factors as $j(\F_\kappa)\restrict p\cong \F_\kappa\times \F_{[\kappa,j(\kappa))}\restrict p$, where the second factor is $\lesseq\kappa$-closed in $N$.  In $V$, we can build an $N$-generic function $f'$ for $\F_{[\kappa,j(\kappa))}$ containing $p$, and so $f\times f'$ is $N$-generic for $j(\F_\kappa)$. Thus, we can lift $j$ to $j:M[f]\to N[f][f']$, and clearly $M[f]$ and $j$ are in $N[f][f']$.

It remains to verify that $M[f]$ is a $\kappa$-model and $N[f][f']$ is a $\Pi^1_{n-1}$-correct $\kappa$-model. The argument to show that $M[f]$ is a $\kappa$-model in $V[f]$ will be more involved than usual because, as $\F_\kappa$ is not $\kappa$-c.c., we cannot apply the generic closure criterion. Fixing $\beta<\kappa$, we will show that $M[f]^\beta\subseteq M[f]$ in $V[f]$. By density, there is an inaccessible cardinal $\alpha>\beta$ and a condition $p=\la \{\gamma,\delta\}\ra\in G$ such that $\gamma<\alpha<\delta$. Below $p$, $\F_\kappa$ factors as $\F_\gamma\times \F_{(\delta,\kappa)}$ and $f$ factors as $f_\gamma\times f_{(\delta,\kappa)}$. Since $\F_\gamma$ clearly has the $\alpha$-c.c., by the generic closure criterion, $M[f_\gamma]^\beta\subseteq M[f_\gamma]$ in $V[f_\gamma]$. Also, since $\F_{(\delta,\kappa)}$ is $\lesseq\alpha$-closed, $M[f_\gamma]^\beta\subseteq M[f_\gamma]$ in $V[f]$. Finally, by the ground closure criterion, $M[f_\gamma][f_{(\delta,\kappa)}]^\beta\subseteq M[f_\gamma][f_{(\delta,\kappa)}]$ in $V[f]$. The same argument shows that $N[f]$ is a $\kappa$-model in $V[f]$, and therefore, $N[f][f']$ is a $\kappa$-model as well. To show that $N[f]$ is $\Pi^1_{n-1}$-correct, we argue essentially as in the proof of Theorem~\ref{theorem_easton_iterations_preserve_correctness}. The arguments in that proof go through noting only that $V_\kappa^{V[f]}=V_\kappa^{N[f]}=V_\kappa[f]$ and $\F_\kappa\subseteq V_\kappa$. Finally, the model $N[f][f']$ must also be $\Pi^1_{n-1}$-correct because the tail forcing $\F_{[\kappa,j(\kappa))}$ does not add any subsets to $V_\kappa[f]$ by closure.
\end{proof}

It is not difficult to see that once we have a fast function, we also get a weak Laver function \cite{Hamkins:AClassOfStrongDiamondPrinciples}.

\begin{lemma}\label{lemma_laver_function}
Suppose $\kappa$ is $\Pi^1_n$-indescribable. In the generic extension $V[f]$ by fast function forcing, there is a function $\ell\from\kappa\to V_\kappa$ satisfying the following property. For all $A,B\in H(\kappa^+)^{V[f]}$, there are a $\kappa$-model $M$ with $\ell,A,B\in M$, a $\Pi^1_{n-1}$-correct $\kappa$-model $N$ and an elementary embedding $j:M\to N$ with critical point $\kappa$ such that $j(\ell)(\kappa)=B$ and $j,M\in N$.
\end{lemma}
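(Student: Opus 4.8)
The plan is to read $\ell$ off the fast function $f$ by a standard coding and then extract the required embeddings from Lemma~\ref{fast_function_lemma}, in the spirit of Hamkins' argument \cite{Hamkins:AClassOfStrongDiamondPrinciples}. Fix in the \emph{ground} model $V$ a well-ordering $<^*$ of $V_\kappa$ (so that, for any embedding we later produce, $j(<^*)$ will live in the target of the embedding before any further forcing). Since $\F_\kappa$ preserves the $\GCH$ at all relevant cardinals, for each inaccessible $\gamma<\kappa$ there are fewer than $\kappa$ nice $\F_\gamma$-names for subsets of $\gamma$, and for $\gamma\in\dom(f)$ the factoring of $\F_\kappa$ below a condition with $\gamma$ in its domain gives $P(\gamma)^{V[f]}=P(\gamma)^{V[f\restrict\gamma]}$, since the tail factor is ${\le}\gamma$-closed. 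Using $f\restrict\gamma$ and $<^*$ one then defines, uniformly in $\gamma$, a well-ordering $<_\gamma$ of $P(\gamma)^{V[f\restrict\gamma]}$ of order type $<\kappa$, e.g.\ by ordering a set $S\subseteq\gamma$ according to the $<^*$-least nice $\F_\gamma$-name realizing it. Now set $\ell(\gamma)$ to be the transitive collapse of the binary relation on $\gamma$ coded by the $f(\gamma)$-th element of $P(\gamma)^{V[f\restrict\gamma]}$ in $<_\gamma$, provided $\gamma\in\dom(f)$ is inaccessible and $f(\gamma)$ is below the order type of $<_\gamma$, and $\ell(\gamma)=\emptyset$ otherwise. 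Then $\ell\from\kappa\to V_\kappa$, and $\ell$ is definable in $V[f]$ from $f$ together with the single parameter $<^*\in H(\kappa^+)^V$; in particular $\ell$ lies in any $\kappa$-model $M$ (computed in $V[f]$) with $f,{<^*}\in M$.

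Fix $A,B\in H(\kappa^+)^{V[f]}$ and let $B^*\subseteq\kappa$ be a canonical code for $B$. I would run the construction in the proof of Lemma~\ref{fast_function_lemma}, but \emph{postpone} the choice of $j(f)(\kappa)$: this yields a $\kappa$-model $M$ with $f,{<^*},A,B,\ell\in M$, a $\Pi^1_{n-1}$-correct $\kappa$-model $N$, and an elementary embedding $j:M\to N$ with $\crit(j)=\kappa$, $\kappa\in j(\kappa)$, $j,M\in N$, which lifts through $\F_\kappa$ for \emph{any} choice of $N$-generic tail for $j(\F_\kappa)$ (the tail forcing is ${\le}\kappa$-closed in $N[f]$, so $\Pi^1_{n-1}$-correctness of the resulting model is preserved no matter which tail generic is used, exactly as in Theorem~\ref{theorem_easton_iterations_preserve_correctness} and Lemma~\ref{fast_function_lemma}). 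Since $B=\dot B_f$ for a name $\dot B\in M\subseteq N$ and the decoding of $B$ from $B^*$ is absolute, $B^*\in N[f]$, so $B^*=\dot T_f$ for some nice $\F_\kappa$-name $\dot T\in N$. The key point is that $j(\langle <_\gamma\st\gamma<\kappa\rangle)(\kappa)$ is computed inside the eventual target model from $f$ and $j(<^*)\in N$ alone, and hence does not depend on the tail generic: it is the well-ordering of $P(\kappa)^{N[f]}$ induced by $j(<^*)$ on the nice $\F_\kappa$-names of $N$. Thus I can set $\alpha$ to be the position of $B^*$ in this well-ordering; note $\alpha<(\kappa^+)^N<j(\kappa)$, so $\la\kappa,\alpha\ra$ is a legitimate condition in $j(\F_\kappa)$. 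Finally I complete the lift, choosing the $N$-generic tail for $j(\F_\kappa)$ to contain $\la\kappa,\alpha\ra$, obtaining $j:M[f]\to N[\hat f]$ with $j(f)(\kappa)=\alpha$, $\ell,A,B\in M[f]$, $j,M\in N\subseteq N[\hat f]$, and $N[\hat f]$ a $\Pi^1_{n-1}$-correct $\kappa$-model in $V[f]$.

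It remains to verify $j(\ell)(\kappa)=B$. By elementarity, $j(\ell)(\kappa)$ is the transitive collapse of the relation on $\kappa$ coded by the $j(f)(\kappa)=\alpha$-th element of $P(\kappa)^{N[f]}$ in $N[\hat f]$'s version of $<_\kappa$, which by the choice of $\alpha$ is exactly $\dot T_f=B^*$. Since $B^*\in N[\hat f]$ and transitive collapses are absolute between transitive models of $\ZFC^-$, this transitive collapse equals $B$. Together with the properties of $M$, $N$, and $j$ listed above, this gives the conclusion of the lemma.

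The main obstacle is entirely organizational: one must arrange the coding so that it is at once (a) uniformly definable from $f$ and a single $H(\kappa^+)$-sized parameter, so that $\ell$ lives in the small models $M$ and $j(\ell)(\kappa)$ is governed by the same recipe applied to $j(f)$; and (b) such that the index $\alpha$ needed to hit $B$ can be determined from $N$, $f$, and $j(<^*)$ \emph{before} the tail generic for $j(\F_\kappa)$ is fixed --- this is what breaks the apparent circularity, since $j(\ell)(\kappa)$ depends on the tail generic, which is still being constructed. The care lies in checking that the relevant component of $j(\langle <_\gamma\rangle)$ depends only on $N$, $f$, and $j(<^*)$ (and not on the tail), and in the bookkeeping around nice names, the factoring of $\F_\kappa$, and absoluteness of transitive collapses; the rest is routine.
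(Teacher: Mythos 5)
Your proposal follows the same overall strategy as the paper's proof: derive the weak Laver function from the fast function in Hamkins' style, and hit the target $B$ by choosing the value $\alpha$ of the master condition $\la\kappa,\alpha\ra$ \emph{before} constructing the tail generic, then lift exactly as in Lemma~\ref{fast_function_lemma}. The difference is only in the coding device. The paper fixes a bijection $b:\kappa\to V_\kappa$ in $V$, sets $\ell(\gamma)=b(f(\gamma))_{f\restrict\gamma}$, takes a nice name $\dot B$ with $\dot B\in j(V_\kappa)$ (using the $\kappa^+$-c.c.), and lets $\alpha$ be the unique ordinal with $j(b)(\alpha)=\dot B$; this $\alpha$ is found inside $N$ with no reference to $f$ or the tail, is automatically below $j(\kappa)$, and $j(\ell)(\kappa)=j(b)(\alpha)_f=\dot B_f=B$ requires no decoding. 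Your route, via induced well-orderings of $P(\gamma)^{V[f\restrict\gamma]}$ and transitive-collapse coding, is workable, and your key observation --- that $j(\la <_\gamma\st\gamma<\kappa\ra)(\kappa)$ is determined by $N$, $f$ and $j(<^*)$ alone, hence the index can be computed before the tail generic exists --- is precisely the right way to break the apparent circularity; it plays the role that ``$\alpha$ is found in $N$'' plays in the paper, at the cost of extra bookkeeping. (Incidentally, the GCH remark is unnecessary: there are fewer than $\kappa$ nice $\F_\gamma$-names simply because $\kappa$ is inaccessible.)

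Two points in your write-up need repair, though both are easily fixed. First, your justification that $\la\kappa,\alpha\ra\in j(\F_\kappa)$ is off: the inequality $\alpha<(\kappa^+)^N$ is neither needed nor justified (the position of $B^*$ in the induced ordering of $P(\kappa)^{N[f]}$ has no reason to lie below $(\kappa^+)^N$). What you actually need is $\alpha<j(\kappa)$, and you should secure it explicitly, for instance by choosing $<^*$ of order type $\kappa$ (possible since $|V_\kappa|=\kappa$): then mapping each set to its $j(<^*)$-least nice name embeds the predecessors of $B^*$ order-preservingly into a proper initial segment of $j(<^*)$, whose order type is $j(\kappa)$, giving $\alpha<j(\kappa)$. (Alternatively, perform a throwaway lift with an arbitrary tail and quote elementarity of the statement ``each $<_\gamma$ has order type $<\kappa$''.) Second, the transitive collapse of the relation coded by $B^*$ is a transitive set, so it is not literally $B$ unless $B$ happens to be transitive; use the standard coding of $\tc(\{B\})$ with a distinguished point and decode $B$ as the image of that point. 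With these adjustments your argument is correct, and one also notes, as in the paper, that the lifted embedding and $M[f]$ lie in $N[\hat f]$ because $j\restrict M, M\in N$ and $\hat f\in N[\hat f]$.
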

\begin{proof}
Fix any bijection $b:\kappa\to V_\kappa$ in $V$. In $V[f]$, define $\ell\from \kappa\to V_\kappa$ by letting $\ell(\gamma)=b(f(\gamma))_{f\restrict\gamma}$ provided that
$f \restrict \gamma$ is $\F_\gamma$-generic over $V$ and $b(f(\gamma))$ is an $\F_\gamma$-name. By adapting the proof of Lemma~\ref{fast_function_lemma}, we verify
that $\ell$ has the desired properties as follows. Working in $V[f]$, fix $A,B\in H(\kappa^+)^{V[f]}$ and let $\dot{\ell},\dot{A},\dot{B}$ be nice $\F_\kappa$-names for $\ell,A$ and $B$ respectively. By Theorem \ref{theorem_hauser} (4), there are a $\kappa$-model $M$ with $\dot{\ell},\dot{A},\dot{B},\F_\kappa,b\in M$, a $\Pi^1_{n-1}$-correct $\kappa$-model $N$ and an elementary embedding $j:M\to N$ with critical point $\kappa$ such that $j,M\in N$. Since $\F_\kappa$ is $\kappa^+$-c.c., we can assume without loss of generality that $\dot{B}\in j(V_\kappa)$. By elementarity $j(b):j(\kappa)\to j(V_\kappa)$ is a bijection, and thus there is some ordinal $\alpha<j(\kappa)$ such that $j(b)(\alpha)=\dot B$. As in the proof of Lemma~\ref{fast_function_lemma}, we may lift $j$ to $j:M[f]\to N[f][f']$ such that $j(f)(\kappa)=\alpha$. Now we have $j(\ell)(\kappa)=j(b)(j(f)(\kappa))_{j(f)\restrict\kappa}=j(b)(\alpha)_{f}=\dot{B}_{f}=B$. Now one may prove that $M[f]$ is a $\kappa$-model and $N[f][f']$ is a $\Pi^1_{n-1}$-correct $\kappa$-model exactly as in the proof of Lemma~\ref{fast_function_lemma}.
\end{proof}

\begin{theorem_1_square_and_reflection}
Suppose $\kappa$ is $\Pi^1_2$-indescribable and $\GCH$ holds. Then there is a cofinality-preserving forcing extension $V[G]$ in which
\begin{enumerate}
\item $\square_1(\kappa)$ holds,
\item $\Refl_1(\kappa)$ holds and
\item $\kappa$ is $\kappa^+$-weakly compact.
\end{enumerate}
\end{theorem_1_square_and_reflection}

\begin{proof}
By passing to an extension with a fast function, we can assume without loss of generality that there is a function $\ell\from\kappa\to V_\kappa$ such that for any $A,B\in H(\kappa^+)$ there are a $\kappa$-model $M$ with $\ell,A,B\in M$, a $\Pi^1_1$-correct $\kappa$-model $N$ and an elementary embedding $j:M\to N$ with critical point $\kappa$ such that $j(\ell)(\kappa)=B$.

Let $\<\P_\alpha,\dot{\Q}_\beta\st\alpha\leq\kappa, ~ \beta<\kappa\>$ be the Easton-support iteration defined as follows.
\begin{itemize}
\item If $\alpha<\kappa$ is inaccessible and $\ell(\alpha)$ is a $\P_\alpha$-name for an $\alpha$-strategically closed, $\alpha^+$-c.c. forcing notion, then $\dot{\Q}_\alpha=\ell(\alpha)$.
\item Otherwise, $\dot{\Q}_\alpha$ is a $\P_\alpha$-name for trivial forcing.
\end{itemize}

Let $G$ be generic for $\P_\kappa$ over $V$. In $V[G]$, we define a 2-step iteration $$\Q_\kappa=\Q_{\kappa,0}*(\dot{\Q}_{\kappa,1}\times\dot{\Q}_{\kappa,2})$$ as follows.
\begin{itemize}
\item $\Q_{\kappa,0}$ is the forcing to add a $\square_1(\kappa)$-sequence from Definition \ref{definition_one_square_forcing}.
\item $\dot{\Q}_{\kappa,2}$ is a $\Q_{\kappa,0}$-name for the forcing $\T(\vec{C}(\kappa))$ to thread the generic $\square_1(\kappa)$-sequence.
\item $\dot{\Q}_{\kappa,1}$ is a $\Q_{\kappa,0}$-name for an iteration $\<\R_\eta,\dot{\S}_\xi\st \eta\leq\kappa^+, ~ \xi<\kappa^+\>$ with supports of size ${<}\kappa$ defined as follows. For each $\eta<\kappa^+$, a $\Q_{\kappa,0}*\dot{\R}_\eta$-name $\dot{S}_\eta$ is chosen for a stationary subset of $\kappa$ such that
\[\forces_{\Q_{\kappa,0}*(\dot{\R}_\eta\times\dot{\Q}_{\kappa,2})}\text{``there is a $1$-club in $\kappa$ disjoint from $\dot{S}_\eta$'',}\]
and then $\dot{\S}_\eta$ is a $\Q_{\kappa,0}*\dot{\R}_\eta$-name for the forcing $T^1(\kappa\setminus \dot{S}_\eta)$ to shoot a $1$-club through the complement of $\dot{S}_\eta$.
\end{itemize}
Notice that $\P_\kappa$ is $\kappa$-c.c. and preserves $\GCH$ and, in $V[G]$, the forcing $\Q_\kappa$ is $\kappa$-strategically closed and $\kappa^+$-c.c.. By standard chain condition arguments and bookkeeping, we can ensure that in $V^{\P_\kappa*\dot{\Q}_{\kappa,0}*\dot{\Q}_{\kappa,1}}$, if $S\subseteq\kappa$ is stationary and
\[\forces_{\Q_{\kappa,2}}\text{``there is a $1$-club in $\kappa$ disjoint from $\check{S}$'',}\]
then there is already a $1$-club in $\kappa$ disjoint from $S$.

Let $H=h_0*(h_1\times h_2)$ be generic for $\Q_\kappa$ over $V[G]$. Our desired model will be $V[G*h_0*h_1]$. We must show that in $V[G*h_0*h_1]$, $\kappa$ is $\kappa^+$-weakly compact, $\Refl_1(\kappa)$ holds and $\square_1(\kappa)$ holds.

In order to show that $\kappa$ is $\kappa^+$-weakly compact in $V[G*h_0*h_1]$, we will first prove the following.

\begin{claim}\label{claim_pi12_indescribable}
$\kappa$ is $\Pi^1_2$-indescribable in $V[G*H]$.
\end{claim}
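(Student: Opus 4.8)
The plan is to verify $\Pi^1_2$-indescribability of $\kappa$ in $V[G*H]$ by producing, for an arbitrary subset $A\in P(\kappa)^{V[G*H]}$, the embedding demanded by the Hauser characterization (Theorem~\ref{theorem_hauser}(2)), that is, a $\kappa$-model $M^*$ with $A\in M^*$, a $\Pi^1_1$-correct $\kappa$-model $N^*$, and an elementary $j:M^*\to N^*$ with $\crit(j)=\kappa$. First I would fix a $\P_\kappa*\dot\Q_\kappa$-name $\dot A\in H(\kappa^+)^V$ for $A$. Using the weak Laver function $\ell$ obtained from the fast function (Lemma~\ref{lemma_laver_function}), I choose $B\in H(\kappa^+)$ coding all the relevant name data ($\dot A$, the name for $\Q_{\kappa,0}$, the iteration $\dot\Q_{\kappa,1}$, the thread name $\dot\Q_{\kappa,2}$, and $\P_\kappa$ itself), and apply Lemma~\ref{lemma_laver_function} inside $V$ to get a $\kappa$-model $M$ with $\ell,\dot A,B\in M$, a $\Pi^1_1$-correct $\kappa$-model $N$, and $j:M\to N$ with $\crit(j)=\kappa$, $j(\ell)(\kappa)=B$, and $j,M\in N$. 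The point of $j(\ell)(\kappa)=B$ is that, by the definition of the iteration $\<\P_\alpha,\dot\Q_\beta\>$, stage $\kappa$ of $j(\P_\kappa)$ is forced to be exactly $\Q_{\kappa,0}*(\dot\Q_{\kappa,1}\times\dot\Q_{\kappa,2})=\Q_\kappa$; hence $j(\P_\kappa)\cong\P_\kappa*\dot\Q_\kappa*\dot\P_{\kappa,j(\kappa)}$ in $N$, with the tail forced to be ${\leq}\kappa$-strategically closed.

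Next I would lift $j$ through the forcing in two rounds. By the generic closure criterion (Lemma~\ref{lemma_genericClosureCriterion}) and the fact that $\P_\kappa$ is $\kappa$-c.c., $N[G]$ is a $\kappa$-model in $V[G]$; by Theorem~\ref{theorem_easton_iterations_preserve_correctness}, it is $\Pi^1_1$-correct there. Forcing with $\Q_\kappa$ over $V[G]$ preserves this: $\Q_\kappa$ is $\kappa$-strategically closed and $\kappa^+$-c.c., and by Corollary~\ref{corollary_closed_forcing_preserves_correctness} (applied with $\Q_{\kappa,0}$, which is ${<}\kappa$-distributive of size $\kappa$) together with the fact that the remaining factors $\dot\Q_{\kappa,1}\times\dot\Q_{\kappa,2}$ are $\kappa$-strategically closed and hence add no new subsets of $V_\kappa$, the model $N[G*H]$ is a $\Pi^1_1$-correct $\kappa$-model in $V[G*H]$. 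Now $N[G*H]$ thinks the tail $\P_{\kappa,j(\kappa)}=(\dot\P_{\kappa,j(\kappa)})_{G*H}$ is ${\leq}\kappa$-strategically closed, and $N[G*H]$ is a $\kappa$-model in $V[G*H]$, so by the standard diagonalization argument we can build in $V[G*H]$ a filter $G'$ generic for $\P_{\kappa,j(\kappa)}$ over $N[G*H]$. Since conditions in $\P_\kappa$ have support below $\crit(j)=\kappa$, we get $j\image G\subseteq\hat G=_{\defn}G*H*G'$, so $j$ lifts to $j:M[G]\to N[\hat G]$. Then, using $A=\dot A_{G}\in M[G]$ — note $\dot A$ is a $\P_\kappa$-name here, or more precisely the relevant piece of $B$ arranges that $A\in M[G]$ — we have produced the required $j:M^*\to N^*$ with $M^*=M[G]$, $N^*=N[\hat G]$, and $A\in M^*$; since $N[G*H]$ is $\Pi^1_1$-correct and the tail $G'$ adds no new subsets of $V_\kappa$, so is $N[\hat G]$. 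By Theorem~\ref{theorem_hauser}, $\kappa$ is $\Pi^1_2$-indescribable in $V[G*H]$.

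The main obstacle, and the step deserving the most care, is keeping the bookkeeping of \emph{which} generic object $A$ lives in consistent across the two lifting rounds: the lemma gives us $j,M\in N$, which we need in order to carry out the tail construction \emph{inside} the (lifted) model, but we must make sure the name $\dot A$ is genuinely a $\P_\kappa$-name (so that $A\in M[G]$ already, before we force with $\Q_\kappa$) rather than a full $\P_\kappa*\dot\Q_\kappa$-name. The cleanest route is to observe that it suffices to handle $A\in P(\kappa)^{V[G*H]}$ that are coded by $\P_\kappa$-names (the genuinely new subsets of $\kappa$ added by the $\kappa$-strategically closed $\Q_\kappa$ are all already decided by ${<}\kappa$-conditions and so can be absorbed), or alternatively to extend the first lift past $\Q_{\kappa,0}*\dot\Q_{\kappa,1}*\dot\Q_{\kappa,2}$ using the strategic closure of $j(\Q_{\kappa,0})$, $j(\dot\Q_{\kappa,1})$, $j(\dot\Q_{\kappa,2})$ over $N[\hat G]$ exactly as in the proof of Theorem~\ref{theorem_forcing_1square_at_a_weakly_compact} (building generic filters $\hat h_0,\hat h_1,\hat h_2\in V[G*H]$ with the appropriate ground-model conditions inserted to guarantee $j\image H\subseteq\hat h_0*(\hat h_1\times\hat h_2)$). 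Either way the verification that $N$-side closure and $\Pi^1_1$-correctness survive all of these steps is routine given the preservation theorems of Section~\ref{section_preliminaries} and the earlier sections; no genuinely new idea beyond the fast-function/Laver-function trick is required.
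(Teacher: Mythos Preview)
Your proposal has a genuine gap at the very first step: the full poset $\Q_{\kappa,1}=\R_{\kappa^+}$ is a $\kappa^+$-length iteration and hence has size $\kappa^+$, so a $\P_\kappa$-name for $\Q_\kappa=\Q_{\kappa,0}*(\dot\Q_{\kappa,1}\times\dot\Q_{\kappa,2})$ does \emph{not} lie in $H(\kappa^+)$ and cannot be taken as the value $j(\ell)(\kappa)$ via Lemma~\ref{lemma_laver_function} (indeed, such a name cannot even be an element of a $\kappa$-model $M$). For the same reason Corollary~\ref{corollary_closed_forcing_preserves_correctness} does not apply to $\Q_\kappa$, since it requires the forcing to have size $\kappa$. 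Your proposed workaround, reducing to $A$ that are coded by $\P_\kappa$-names, is also incorrect: each stage $\S_\xi$ of $\Q_{\kappa,1}$ adds a genuinely new subset of $\kappa$ (a $1$-club), so $\Q_{\kappa,1}$ certainly adds new elements of $P(\kappa)$ that must be handled.

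The paper's proof resolves this by using the $\kappa^+$-c.c.\ to find $\eta<\kappa^+$ such that $\dot A$ is already a $\P_\kappa*\dot\Q_{\kappa,0}*(\dot\R_\eta\times\dot\Q_{\kappa,2})$-name, and then sets $j(\ell)(\kappa)$ equal to this size-$\kappa$ poset. Lifting $j$ through $h_1\restrict\eta$ is then the heart of the argument and is \emph{not} routine: one must build a master condition $p^*\in j(\R_\eta)$ with $\dom(p^*)=j\image\eta$ (using $j,M\in N$ so that $p^*$ lies in the target model), and verifying that $p^*\restrict j(\xi)\forces p^*(j(\xi))\cap j(\dot S_\xi)=\emptyset$ for each $\xi<\eta$ requires a separate argument by contradiction that exploits the defining property of $\dot S_\xi$, namely that the thread forcing $\Q_{\kappa,2}$ forces a $1$-club disjoint from it. None of this is covered by the pattern of Theorem~\ref{theorem_forcing_1square_at_a_weakly_compact}.
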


\begin{proof}
Fix $A\in P(\kappa)^{V[G*H]}$. We must find a $\kappa$-model $M$ with $A\in M$, a $\Pi^1_1$-correct $\kappa$-model $N$ and an elementary embedding $j:M\to N$ with critical point $\kappa$.

Let $\dot{A}\in V$ be a $\P_\kappa*\dot{\Q}_\kappa$-name for $A$. Since $\P_\kappa*\dot{\Q}_{\kappa,0}*(\dot{\Q}_{\kappa,1}\times\dot{\Q}_{\kappa,2})$ has the $\kappa^+$-c.c., we can fix $\eta<\kappa^+$ such that $\dot{A}$ is a $\P_\kappa*\dot{\Q}_{\kappa,0}*(\dot{\R}_\eta\times\dot{\Q}_{\kappa,2})$-name. Moreover, we can assume that $\dot{A}$, $\P_\kappa$ and $\dot{\Q}_{\kappa,0} * (\dot{\R}_\eta \times \dot{\Q}_{\kappa,2})$ are in $H(\kappa^+)$. For $\eta<\kappa^+$, let $h_1\restrict\eta$ be the generic for $\R_\eta$ induced by $h_1$.

By Proposition \ref{lemma_laver_function}, there are a $\kappa$-model $M$ with $\ell,\P_\kappa,\dot{A},\dot{\Q}_{\kappa,0}*(\dot{\R}_\eta\times\dot{\Q}_{\kappa,2})\in M$, a $\Pi^1_1$-correct $\kappa$-model $N$ and an elementary embedding $j:M\to N$ with critical point $\kappa$ such that $j(\ell)(\kappa)=\dot{\Q}_{\kappa,0}*(\dot{\R}_\eta\times\dot{\Q}_{\kappa,2})$ and $j,M\in N$. Without loss of generality we may additionally assume that $M\models |\eta|=\kappa$ since a bijection witnessing this can easily be placed into such a $\kappa$-model.

Notice that $j(\P_\kappa)$ is an Easton-support iteration in $N$ of length $j(\kappa)$ and
\[j(\P_\kappa)\cong\P_\kappa*(\dot{\Q}_{\kappa,0}*(\dot{\R}_\eta\times\dot{\Q}_{\kappa,2}))*\dot{\P}_{\kappa,j(\kappa)}\] by our choice of $j(l)(\kappa)$.
By Theorem \ref{theorem_easton_iterations_preserve_correctness}, $N[G]$ is $\Pi^1_1$-correct in $V[G]$, and by Corollary \ref{corollary_closed_forcing_preserves_correctness}, $N[G*h_0*(h_1\restrict\eta\times h_2)]$ is $\Pi^1_1$-correct in $V[G*h_0*(h_1\restrict\eta\times h_2)]$. Hence $N[G*h_0*(h_1\restrict\eta\times h_2)]$ is $\Pi^1_1$-correct in $V[G*h_0*(h_1\times h_2)]$ by Corollary~\ref{cor_kappaStatClosedPreservesPi11}.

Since $(\dot \P_{\kappa,j(\kappa)})_{G*h_0*(h_1\restrict\eta\times h_2)}=\P_{\kappa,j(\kappa)}$ is $\kappa$-strategically closed in\break $N[G*h_0*(h_1\restrict\eta\times h_2)]$ and since $N[G*h_0*(h_1\restrict\eta\times h_2)]$ is a $\kappa$-model in $V[G*H]$, we can build a filter $G'_{\kappa,j(\kappa)}$ which is generic for $\P_{\kappa,j(\kappa}$ over $N[G*h_0*(h_1\restrict\eta\times h_2)]$. Since $j\restrict G$ is the identity function, it follows that $j\image G\subseteq \hat{G}=_{\defn}G*h_0*(h_1\restrict\eta\times h_2)*G_{\kappa,j(\kappa)}$, and thus $j$ lifts to $j:M[G]\to N[\hat{G}]$.

Let $\vec{C}=\<C_\alpha\st\alpha\in\inacc(\kappa)\>$ be the generic $\square_1(\kappa)$-sequence added by $h_0$, and let $T$ be the thread added by $h_2\subseteq\Q_{\kappa,2}=\T(\vec{C}(\kappa))$. By Lemma~\ref{lemma_TaddsThread}, $T$ is a $1$-club in $N[G*h_0*(h_1\restrict\eta\times h_2)]$. Let $p_0=\vec{C}\cup\{(\kappa,T)\}$. Then $p_0\in N[\hat{G}]$ and $p_0\in j(\Q_{\kappa,0})$. Moreover, $p_0\leq_{j(\Q_{\kappa,0})} j(q)$ for all $q\in h_0$. By the strategic closure of $j(\Q_{\kappa,0})$ and the fact that $N[\hat{G}]$ is a $\kappa$-model in $V[G*H]$, we can build a filter $p_0\in\hat{h}_0\subseteq j(\Q_{\kappa,0})$ which is generic over $N[\hat{G}]$. Thus, $j$ extends to $j:M[G*h_0]\to N[\hat{G}*\hat{h}_0]$.

Similarly, in $N[\hat{G}*\hat{h}_0]$, the set $p_2=T\cup\{\kappa\}$ is a condition in $j(\Q_{\kappa,2})$ and $p_2\leq_{j(\Q_{\kappa,2})} j(q)$ for all $q\in h_{2}$. Again, since $j(\Q_{\kappa,2})$ is $\kappa$-strategically closed in $N[\hat{G}*\hat{h}_0]$, which is a $\kappa$-model in $V[G*H]$, we can build a filter $p_2\in \hat{h}_2\subseteq j(\Q_{\kappa,2})$ which is generic for $j(\Q_{\kappa,2})$ over $N[\hat{G}*\hat{h}_0]$, and lift $j$ to $$j:M[G*h_0*h_2]\to N[\hat G*\hat h_0*\hat h_2].$$

Now we lift the embedding through $h_1\restrict\eta$. Let $\R_\eta=(\dot \R_\eta)_{G*h_0}$. By elementarity, $j(\R_\eta)$ is an iteration of length $j(\eta)$ with supports of size less than $j(\kappa)$. For each $\xi<\eta$, $\dot{\S}_{j(\xi)}=j(\dot{\S}_\xi)$ is, in $N[\hat{G}*\hat{h}_0*\hat{h}_2]$, a $j(\R_\xi)=\R_{j(\xi)}$-name for the forcing to shoot a $1$-club disjoint from $j(\dot{S}_\xi)$. For all $\xi<\eta$, let $$D_\xi=\bigcup\{(p(\xi))_{h_1\restrict \xi}\st p\in h_1 \text{ and } \xi\in\dom(p)\}$$ and note that $D_\xi$ is a $1$-club subset of $\kappa$ in $N[\hat{G}*\hat{h}_0*\hat{h}_2]$ because the forcing after $\R_{\xi+1}$ is $\kappa$-strategically closed and therefore cannot affect $\Pi^1_1$-truths by Lemma~\ref{lemma_strategic}. Since $h_1\restrict\eta,j\in N[\hat{G}*\hat{h}_0*\hat{h}_2]$, we can define a function $p^*\in N[\hat{G}*\hat{h}_0*\hat{h}_2]$ such that $\dom(p^*)=j\image\eta$ by letting $p^*(j(\xi))$ be a $j(\R_\xi)$-name for $D_\xi\cup\{\kappa\}$ for all $\xi<\eta$. In order to verify that $p^*\in j(\R_\eta)$, we must show that for all $\xi<\eta$, $p^*\restrict j(\xi)\forces_{j(\R_\xi)}p^*(j(\xi))\cap j(\dot{S}_\xi)=\emptyset$.

Suppose this is not the case, and let $\xi<\eta$ be the minimal counterexample. It follows that $p^*\restrict j(\xi)\in j(\R_\xi)$ and, for all $p\in h_1\restrict\xi$ we have $p^*\restrict j(\xi)\leq j(p)$. By assumption, $$p^*\restrict j(\xi)\not\forces_{j(\R_\xi)}p^*(j(\xi))\cap j(\dot{S}_\xi)=\emptyset$$ and thus we may let $p^{**}\leq_{j(\R_\xi)} p^*\restrict j(\xi)$ be such that $$p^{**}\forces_{j(\R_\xi)}p^*(j(\xi))\cap j(\dot{S}_\xi)\neq\emptyset.$$ Since $j(\R_\xi)$ is sufficiently strategically closed, we can build a filter $\hat{h}_1\subseteq j(\R_\xi)$ in $V[G*H]$ which is generic over $N[\hat{G}*\hat{h}_0*\hat{h}_2]$ with $p^{**}\in \hat{h}_1$ and lift to \[j:M[G*h_0*h_2*(h_1\restrict\xi)]\to N[\hat{G}*\hat{h}_0*\hat{h}_2*\hat{h}_1].\]
It follows that in $N[\hat{G}*\hat{h}_0*\hat{h}_2*\hat{h}_1]$ we have $(D_\xi\cup\{\kappa\})\cap j(S_\xi)\neq\emptyset$, where $S_\xi=(\dot S_\xi)_{h_0\restrict \xi}$. Since $j(S_\xi)\cap\kappa=S_\xi$, we know that $D_\xi\cap j(S_\xi)=\emptyset$, so it must be the case that $\kappa\in j(S_\xi)$. However, in $M[G*h_0*(h_1\restrict\xi)]$, we have $$\forces_{\Q_{\kappa,2}}\text{``there is a $1$-club in $\kappa$ disjoint from $\check S_\xi$''.}$$ Therefore, we can fix such a $1$-club $E$ in $M[G*h_0*h_2*(h_1\restrict\xi)]$. Note that $E$ is actually stationary because $M[G*h_0*h_2*(h_1\restrict \xi)]$ is $\Pi^1_1$-correct by Theorem~\ref{theorem_easton_iterations_preserve_correctness} and Corollary~\ref{corollary_closed_forcing_preserves_correctness}. But then $\kappa\in j(E)$ since in $N[\hat{G}*\hat{h}_0*\hat{h}_2*\hat{h}_1]$, $j(E)$ is $1$-club in $j(\kappa)$ and $j(E)\cap\kappa=E$ is stationary in $\kappa$. Thus $\kappa\in j(E)\cap j(S_\xi)=\emptyset$, a contradiction.

Thus, $p^*\in j(\R_\eta)$ and we can build a filter $p^*\in \hat{h}_1$ in $V[G*H]$ which is generic over $N[\hat{G}*\hat{h}_0*\hat{h}_2]$. This implies that the embedding lifts to
\[j:M[G*h_0*h_2*(h_1\restrict\eta)]\to N[\hat{G}*\hat{h}_0*\hat{h}_2*\hat{h}_1].\]
As we argued above, $N[G*h_0*h_2*(h_1\restrict\eta)]$ is $\Pi^1_1$-correct in $V[G*H]$, and since the forcing
$$\P_{(\kappa,j(\kappa))}*j(\dot \Q_{\kappa,0})*(j(\dot \Q_{\kappa,2})\times j(\dot \R_\eta))$$
is ${\leq}\kappa$-distributive, it follows that $N[\hat{G}*\hat{h}_0*\hat{h}_2*\hat{h}_1]$ is $\Pi^1_1$-correct in $V[G*H]$. Since $A=\dot{A}_{G*h_0*(h_1\restrict\eta\times h_2)}\in M[G*h_0*h_2*(h_1\restrict\eta)]$, this shows that $\kappa$ is $\Pi^1_2$-indescribable in $V[G*H]$.
\end{proof}

Now let us argue that $\kappa$ is $\kappa^+$-weakly compact in $V[G*h_0*h_1]$. Fix $\zeta<\kappa^+$. We must argue that $\Tr_1^\zeta([\kappa])^{V[G*h_0*h_1]} \neq [\emptyset]$. Since $\kappa$ is $\Pi^1_2$-indescribable in $V[G*h_0*(h_1\times h_2)]$ by Claim \ref{claim_pi12_indescribable}, and since $\Q_{\kappa,2}$ is $\kappa$-strategically closed, it follows that $\Tr_1^\zeta([\kappa])^{V[G*h_0*(h_1\times h_2)]}=[S]$, where $S\in V[G*h_0*h_1]$ is $\Pi^1_2$-indescribable in $V[G*h_0*(h_1\times h_2)]$. It follows that $S$ is weakly compact in $V[G*h_0*h_1]$ by Proposition~\ref{proposition_nonresurection}, and clearly $\Tr_1^\zeta([\kappa])^{V[G*h_0*h_1]}=[S]$. Thus, $\kappa$ is $\kappa^+$-weakly compact in $V[G*h_0*h_1]$.

We next argue that $\Refl_1(\kappa)$ holds in $V[G*h_0*h_1]$. Fix a weakly compact set $S\subseteq\kappa$ in $V[G*h_0*h_1]$. Since $S$ intersects every $1$-club in $\kappa$, our construction of $\Q_{\kappa,1}$ implies that there is $p\in\Q_{\kappa,2}$ such that
\[
	p\forces_{\Q_{\kappa,2}} ``\text{there is no 1-club in }\kappa\text{ disjoint from }\check{S}".
\]
Let $g_{2}\subseteq \Q_{\kappa,2}$ be generic over $V[G*h_0*h_1]$ with $p\in g_{2}$. By the proof of Claim~\ref{claim_pi12_indescribable}, $\kappa$ is $\Pi^1_2$-indescribable in $V[G*h_0*h_1*g_{2}]$. Therefore, in $V[G*h_0*h_1*g_{2}]$, $\Refl_1(\kappa)$ holds and $S$ is a weakly compact subset of $\kappa$, and thus there is some $\alpha<\kappa$ such that $S\cap\alpha$ is a weakly compact subset of $\alpha$. But $V[G*h_0*h_1*g_{2}]$ and $V[G*h_0*h_1]$ have the same $V_\kappa$, so $S\cap\alpha$ is a weakly compact subset of $\alpha$ in $V[G*h_0*h_1]$. Thus, $\Refl_1(\kappa)$ holds in $V[G*h_0*h_1]$.

Finally, we argue that $\square_1(\kappa)$ holds in $V[G*h_0*h_1]$. The sequence $$\bigcup h_0=\vec C=\<C_\alpha\st \alpha\in\inacc(\kappa)\>$$ is a $\square_1(\kappa)$-sequence in $V[G*h_0]$ by Theorem~\ref{theorem_reflection_characterization} because we can show that $\Refl_0(\kappa)$ holds by essentially the same argument as for $\Refl_1(\kappa)$ above. Suppose that $\vec{C}$ is no longer a $\square_1(\kappa)$-sequence in $V[G*h_0*h_1]$. This implies that there is a condition $p\in h_1$ such that in $V[G*h_0]$,
\begin{center}
$p\forces_{\Q_{\kappa,1}}$ ``there is a $1$-club $\dot{E}\subseteq\check{\kappa}$ that threads $\check{\vec{C}}$''.
\end{center}
Let $g_1$ be generic for $\Q_{\kappa,1}$ over $V[G*h_0*h_1]$ with $p\in g_1$. In $V[G*h_0*(h_1\times g_1)]$, let $E=\dot{E}_{h_1}$ and $E^*=\dot{E}_{g_1}$. By mutual genericity, we may fix $\alpha\in E\setminus E^*$. A proof almost identical to that of Claim \ref{claim_pi12_indescribable} shows that $\kappa$ is $\Pi^1_2$-indescribable in $V[G*h_0*(h_1\times g_1\times h_2)]$ and hence weakly compact in $V[G*h_0*(h_1\times g_1)]$. Now, in $V[G*h_0*(h_1\times g_1)]$, fix any $j:M\to N$  with critical point $\kappa$ and $E, E^*\in M$. Since both are 1-clubs, $\kappa\in j(E)\cap j(E^*)$, and so by elementarity there is an inaccessible $\beta\in\kappa\setminus(\alpha+1)$ such that $E\cap\beta$ and $E^*\cap\beta$ are both stationary in $\beta$. But then, as they both thread $\vec{C}$, it must be the case that $E\cap\beta=C_\beta=\hat{E}\cap\beta$. This contradicts the fact that $\alpha\in E\setminus E^*$ and finishes the proof of the theorem.
\end{proof}
\begin{remark}\label{remark_nonresurectionfails}
Observe that $\kappa$ cannot be $\Pi^1_2$-indescribable in $V[G*h_0*h_1]$ because $\square_1(\kappa)$ holds there. Thus, the set $S$, where $\Tr_1^\zeta([\kappa])^{V[G*h_0*h_1]}=[S]$, cannot be $\Pi^1_2$-indescribable in $V[G*h_0*h_1]$, which shows that Proposition~\ref{proposition_nonresurection} can fail for $\Pi^1_2$-indescribable sets.
\end{remark}
\section{An application to simultaneous reflection}\label{section_applications}
In this section we will show that the simultaneous reflection principle $\Refl_n(\kappa,2)$ is incompatible with $\square_n(\kappa)$.

\begin{theorem}\label{theorem_n_square_refutes_simultaneous_refl}
  Suppose that $1\leq n<\omega$, $\kappa$ is $\Pi^1_n$-indescribable and $\square_n(\kappa)$ holds. Then
  there are two $\Pi^1_n$-indescribable subsets $S_0, S_1 \subseteq \kappa$ that do not
  reflect simultaneously, i.e., there is no $\beta < \kappa$ such that
  $S_0 \cap \beta$ and $S_1 \cap \beta$ are both $\Pi^1_n$-indescribable subsets of $\beta$.
\end{theorem}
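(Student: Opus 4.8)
The plan is to adapt the classical argument that $\square(\kappa)$ refutes simultaneous stationary reflection, working with order types of the sets $C_\alpha$. Note first that it suffices to exhibit $\Pi^1_n$-indescribable $S_0,S_1\subseteq\kappa$ with $\Tr_n(S_0)\cap\Tr_n(S_1)=\emptyset$, since any common reflection point of $S_0$ and $S_1$ lies in that intersection. If $\Refl_n(\kappa)$ fails we are immediately done: fix a $\Pi^1_n$-indescribable $S$ with $\Tr_n(S)=\emptyset$ and take $S_0=S$, $S_1=S\setminus\{\min S\}$. So assume $\Refl_n(\kappa)$ holds and fix a $\square_n(\kappa)$-sequence $\vec C=\langle C_\alpha\st\alpha\in\Tr_{n-1}(\kappa)\rangle$. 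By Corollary~\ref{cor_trace_n-1_indescribable} applied to $S=\kappa$, $\Tr_{n-1}(\kappa)$ is an $n$-club, hence $\Pi^1_n$-indescribable and a member of $\Pi^1_n(\kappa)^*$. For $\alpha\in\Tr_{n-1}(\kappa)$, $C_\alpha$ is an $n$-club in $\alpha$, hence $\Pi^1_{n-1}$-indescribable, hence stationary, hence cofinal in $\alpha$; put $o(\alpha)=\ot(C_\alpha)\le\alpha$. Since $\Tr_{n-1}(C_\alpha)\subseteq C_\alpha$ and $C_\xi=C_\alpha\cap\xi$ for $\xi\in\Tr_{n-1}(C_\alpha)$, one checks that $\Tr_{n-1}(C_\xi)=\Tr_{n-1}(C_\alpha)\cap\xi$, and therefore $o$ is strictly increasing, hence injective, on each set $\Tr_{n-1}(C_\beta)$: if $\xi<\xi'$ lie in $\Tr_{n-1}(C_\beta)$ then $\xi\in\Tr_{n-1}(C_{\xi'})$, so $o(\xi)=\ot(C_{\xi'}\cap\xi)<\ot(C_{\xi'})=o(\xi')$ because $\xi<\xi'=\sup C_{\xi'}$.

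For $\theta<\kappa$ set $S_\theta=\{\alpha\in\Tr_{n-1}(\kappa)\st o(\alpha)=\theta\}$. The central observation is that no $S_\theta$ reflects at any $\Pi^1_n$-indescribable cardinal: if $\beta$ were such a cardinal with $S_\theta\cap\beta\in\Pi^1_n(\beta)^+$, then, $\beta$ being $\Pi^1_n$-indescribable, $C_\beta$ is an $n$-club in $\beta$ that is $\Pi^1_n$-indescribable in $\beta$, so $\Tr_{n-1}(C_\beta)$ is an $n$-club in $\beta$ by Corollary~\ref{cor_trace_n-1_indescribable}; then $S_\theta\cap\beta\cap\Tr_{n-1}(C_\beta)$ is $\Pi^1_n$-indescribable and in particular has at least two elements, contradicting the injectivity of $o$ on $\Tr_{n-1}(C_\beta)$. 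Consequently, were some $S_\theta$ $\Pi^1_n$-indescribable, it would be a $\Pi^1_n$-indescribable set with empty $n$-trace, contradicting $\Refl_n(\kappa)$; so, by Fodor's lemma for the normal ideal $\Pi^1_n(\kappa)$, the set $\{\alpha\in\Tr_{n-1}(\kappa)\st o(\alpha)<\alpha\}$ cannot be $\Pi^1_n$-indescribable, and hence $E=\{\alpha\in\Tr_{n-1}(\kappa)\st\ot(C_\alpha)=\alpha\}$ is $\Pi^1_n$-indescribable.

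It remains to extract the required pair from the ``diagonal'' set $E$, and this is where the real work lies. For $\alpha\in E$ that is $\Pi^1_n$-indescribable, let $e_\alpha\colon\alpha\to C_\alpha$ be the increasing enumeration and $D_\alpha$ the club of closure points of $e_\alpha$; then $e_\alpha(\xi)=\min(C_\alpha\setminus\xi)$ for $\xi\in D_\alpha$, so $D_\alpha\cap C_\alpha\subseteq\{\xi\st e_\alpha(\xi)=\xi\}$, and together with coherence this shows $D_\alpha\cap\Tr_{n-1}(C_\alpha)\subseteq E$; since $D_\alpha\cap\Tr_{n-1}(C_\alpha)$ is an $n$-club in $\alpha$, we conclude that every $\Pi^1_n$-indescribable member of $\Tr_{n-1}(E)$ lies in $E$ and is a reflection point of $E$. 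The plan from here is to iterate the order-type argument, now over the $\Pi^1_n$-indescribable set $E$ and with $C_\alpha$ replaced successively by $\Tr_{n-1}(C_\alpha),\Tr_{n-1}(\Tr_{n-1}(C_\alpha)),\dots$. Each such set is $n$-closed and contained in the previous one, and a set $B$ with $\Tr_{n-1}(B)=B$ must be empty, since its least element would have an empty, hence non-$\Pi^1_{n-1}$-indescribable, proper initial segment; hence this descending iteration cannot remain $\Pi^1_n$-positive forever. Tracking the stage at which positivity is lost, and using $\Refl_n(\kappa)$, normality, and the coherence of the derived sequences, one then extracts two $\Pi^1_n$-indescribable subsets of $\kappa$ with disjoint $n$-traces.

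The main obstacle is precisely this final step: making the iteration over $E$ and over the derived sequences $\langle\Tr_{n-1}^{(\xi)}(C_\alpha)\rangle$ precise---in particular handling limit stages (via diagonal intersections) so that the relevant sets remain in the appropriate indescribability filters, and pinpointing the stage at which the descent forces the existence of the non-simultaneously-reflecting pair. Everything else is a routine transcription of the $\square(\kappa)$ case, with stationarity replaced by $\Pi^1_{n-1}$- and $\Pi^1_n$-indescribability, clubs by $n$-clubs, and Fodor's lemma by normality of $\Pi^1_n(\kappa)$, using only Corollary~\ref{cor_trace_n-1_indescribable}, Lemma~\ref{lemma_indescribable_union}, and the filter-base property of $n$-clubs from Section~\ref{section_preliminaries}.
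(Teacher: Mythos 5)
Your argument has a genuine gap, and it sits exactly at the point where the theorem's content lies. The first half of your proposal (the order-type function $o(\alpha)=\ot(C_\alpha)$, its injectivity on sets of the form $\Tr_{n-1}(C_\beta)$, and the pressing-down argument showing that $E=\{\alpha\in\Tr_{n-1}(\kappa)\st\ot(C_\alpha)=\alpha\}$ is in the filter $\Pi^1_n(\kappa)^*$) is fine, but it yields no leverage: knowing that $\ot(C_\alpha)=\alpha$ on a large set is precisely the degenerate case in which the classical order-type trick (which works for $\square_\lambda$-style principles, where order types are uniformly bounded) tells you nothing, and it does not produce any candidate pair $S_0,S_1$. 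Everything after that --- the ``iteration over $E$ and over the derived sequences $\langle\Tr_{n-1}^{(\xi)}(C_\alpha)\rangle$,'' handling limit stages by diagonal intersections, and ``tracking the stage at which positivity is lost'' --- is a declared plan, not a proof: you never exhibit the two $\Pi^1_n$-indescribable sets, nor explain how loss of positivity in such a descent would manufacture them, and it is far from clear that the derived sequences even remain coherent sequences of $n$-clubs on a suitable domain (for $\alpha$ that is merely $\Pi^1_{n-1}$-indescribable, $\Tr_{n-1}(C_\alpha)$ need not be $\Pi^1_{n-1}$-indescribable in $\alpha$, and may be empty), let alone that non-threadability transfers to them. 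So the crucial step is missing, and I see no indication that this route closes.

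For comparison, the paper's proof does not use order types at all. Assuming every pair of $\Pi^1_n$-indescribable sets reflects simultaneously, it fixes, for each $\alpha\in\Tr_{n-1}(\kappa)$, the splitting of $\Tr_{n-1}(\kappa)\setminus(\alpha+1)$ into $S^0_\alpha=\{\beta\st C_\beta\cap\alpha\in\Pi^1_{n-1}(\alpha)^+\}$ (equivalently, by coherence, $C_\beta\cap\alpha=C_\alpha$) and its complement $S^1_\alpha$. A pressing-down argument over $\beta$ in $E\cap\Tr_{n-1}(C)$, where $E$ is the set of $\alpha$ at which $\Refl_{n-1}(\alpha)$ holds, shows that the index set $A=\{\alpha\st S^0_\alpha\in\Pi^1_n(\kappa)^+\}$ is $\Pi^1_n$-indescribable (Claim~\ref{claim_indescribable_index_set}); if $S^1_\alpha$ were small for every $\alpha\in A$, the sets $C_\alpha$ for $\alpha\in A$ would cohere pairwise and $\bigcup_{\alpha\in A}C_\alpha$ would thread $\vec{C}$ by Lemma~\ref{lemma_indescribable_union}, a contradiction; and for an $\alpha$ with both $S^0_\alpha,S^1_\alpha$ positive, a common reflection point $\gamma$ would give, via $\Tr_{n-1}(C_\gamma)$, ordinals $\beta_0\in S^0_\alpha$ and $\beta_1\in S^1_\alpha$ with $C_{\beta_0}=C_{\beta_1}\cap\beta_0$, contradicting the defining difference between $S^0_\alpha$ and $S^1_\alpha$. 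If you want to salvage your write-up, replacing the descent plan with this splitting argument is the way to go; your preliminary reductions (handling the failure of $\Refl_n(\kappa)$ separately) can stay, though the paper's proof does not even need them.
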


\begin{proof}
  Suppose for the sake of contradiction that every pair of $\Pi^1_n$-indescribable subsets
  of $\kappa$ reflects simultaneously. Already, $\Refl_n(\kappa)$ implies that $\kappa$ is $\omega$-$\Pi^1_n$-indescribable (see \cite{MR3985624}),
  so the set $E = \{\alpha < \kappa \st \Refl_{n-1}(\alpha) \mbox{ holds}\}$ is a $\Pi^1_n$-indescribable subset of $\kappa$ because the set of $\Pi^1_n$-indescribable cardinals below $\kappa$ is $\Pi^1_n$-indescribable and $(n-1)$-reflection holds at each of them.

  Let $\vec{C} = \langle C_\alpha \st \alpha\in\Tr_{n-1}(\kappa) \rangle$ be a $\square_n(\kappa)$-sequence. For all
  $\alpha \in \Tr_{n-1}(\kappa)$, let
  \begin{align*}
    S^0_\alpha &= \{\beta \in \Tr_{n-1}(\kappa) \setminus (\alpha + 1) \st
    C_\beta \cap \alpha \in\Pi^1_{n-1}(\alpha)^+\} \text{ and} \\
    S^1_\alpha &= \Tr_{n-1}(\kappa) \setminus ((\alpha + 1) \cup S^0_\alpha).
  \end{align*}
Let $A=\{\alpha\in\Tr_{n-1}(\kappa)\st S^0_\alpha\in \Pi^1_n(\kappa)^+\}$.

  \begin{claim}\label{claim_indescribable_index_set}
    $A$ is $\Pi^1_n$-indescribable in $\kappa$.
  \end{claim}

  \begin{proof}
Fix an $n$-club $C\subseteq\kappa$. Since $\Tr_{n-1}(C)$ is an $n$-club in $\kappa$, it follows that $E\cap \Tr_{n-1}(C)$ is $\Pi^1_n$-indescribable in $\kappa$. For each $\beta\in E\cap\Tr_{n-1}(C)$, $\Refl_{n-1}(\beta)$ holds and $C_\beta\cap C$ is a $\Pi^1_{n-1}$-indescribable subset of $\beta$. Thus, for $\beta\in E\cap \Tr_{n-1}(C)$, we may let $\alpha_\beta$ be the least $\Pi^1_{n-1}$-indescribable cardinal such that $C_\beta\cap C\cap\alpha_\beta$ is $\Pi^1_{n-1}$-indescribable in $\alpha_\beta$. Notice that $\alpha_\beta\in C$ for all $\beta\in E\cap \Tr_{n-1}(C)$ because $C$ is an $n$-club. Since the map $\beta\mapsto\alpha_\beta$ is regressive on $E\cap\Tr_{n-1}(C)$, it follows by the normality of $\Pi^1_n(\kappa)$ that there is a fixed $\alpha\in C$ and a $\Pi^1_n$-indescribable set $T\subseteq E\cap\Tr_{n-1}(C)$ such that $\alpha_\beta=\alpha$ for all $\beta\in T$. This implies that $T\subseteq S^0_\alpha$, and thus $\alpha\in A\cap C$.
  \end{proof}

  \begin{claim}
    There is $\alpha \in A$ such that $S^1_\alpha$ is a $\Pi^1_n$-indescribable subset of $\kappa$.
  \end{claim}

  \begin{proof}
    Suppose not, and let $\alpha_0 < \alpha_1$ be elements of $A$. Since $E$ is $\Pi^1_n$-indescribable in $\kappa$ and $S^1_{\alpha_0}$
    and $S^1_{\alpha_1}$ are both in the $\Pi^1_n$-indescribability ideal on $\kappa$, we can find
    $\beta \in E \setminus ((\alpha_1 + 1) \cup S^1_{\alpha_0} \cup S^1_{\alpha_1})$.
    It follows that $\beta \in S^0_{\alpha_0} \cap S^0_{\alpha_1}$, so, by the coherence
    properties of the $\square_n(\kappa)$-sequence, we have $C_\beta \cap \alpha_0 =
    C_{\alpha_0}$ and $C_\beta \cap \alpha_1 = C_{\alpha_1}$, and hence
    $C_{\alpha_1} \cap \alpha_0 = C_{\alpha_0}$. But then by Lemma \ref{lemma_indescribable_union} and Claim \ref{claim_indescribable_index_set}, we see that $\bigcup_{\alpha \in A} C_\alpha$ is a $\Pi^1_n$-indescribable subset of $\kappa$. Thus,
   $\bigcup_{\alpha \in A} C_\alpha$ is a thread through $\vec{C}$, which is a contradiction.
  \end{proof}

  We can therefore fix $\alpha \in \Tr_{n-1}(\kappa)$ such that both $S^0_\alpha$ and $S^1_\alpha$
  are $\Pi^1_n$-indescribable subsets of $\kappa$. Let $S_0 = S^0_\alpha$ and $S_1 = S^1_\alpha$.
  We claim that $S_0$ and $S_1$ cannot reflect simultaneously. Otherwise, there is
  $\gamma$ such that $S_0 \cap \gamma$ and $S_1 \cap \gamma$ are both $\Pi^1_n$-indescribable
  subsets of $\gamma$. Consider the $n$-club $C_\gamma$. Since $\gamma$ is $\Pi^1_n$-indescribable,
  $\mathrm{Tr}_{n-1}(C_\gamma)$ is also an $n$-club in $\gamma$. We can therefore find $\beta_0 < \beta_1$
  in $\mathrm{Tr}_{n-1}(C_\gamma)$ such that $\beta_0 \in S_0$ and $\beta_1 \in S_1$.
  But note that $C_{\beta_0} = C_\gamma \cap \beta_0$ and $C_{\beta_1} = C_\gamma \cap
  \beta_1$, so $C_{\beta_0} = C_{\beta_1} \cap \beta_0$, contradicting the fact
  that $C_{\beta_0} \cap \alpha$ is $\Pi^1_{n-1}$-indescribable in $\alpha$ whereas
  $C_{\beta_1} \cap \alpha$ is not $\Pi^1_{n-1}$-indescribable in $\alpha$.
\end{proof}

As a direct consequence of Theorem \ref{theorem_1_square_and_reflection} and Theorem \ref{theorem_n_square_refutes_simultaneous_refl} we obtain the following.

\begin{corollary}\label{corollary_reflection}
Suppose $\kappa$ is $\Pi^1_2$-indescribable. Then there is a forcing extension in which $\Refl_1(\kappa)$ and $\lnot\Refl_1(\kappa,2)$ both hold.
\end{corollary}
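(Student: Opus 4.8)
The plan is to read off the result by composing the forcing of Theorem~\ref{theorem_1_square_and_reflection} with the combinatorial consequence of $\square_1(\kappa)$ supplied by Theorem~\ref{theorem_n_square_refutes_simultaneous_refl}. The only hypothesis of Theorem~\ref{theorem_1_square_and_reflection} not present in the statement of the corollary is the $\GCH$, so the first step is to arrange it. If the $\GCH$ fails in $V$, I would first force it with the canonical reverse Easton iteration of the forcings $\Add(\gamma^+,1)$ over cardinals $\gamma$; a standard lifting argument — of exactly the flavor used throughout the paper, using that the part of the iteration below $\kappa$ has size $\kappa$ and is sufficiently distributive while the tail is highly closed — shows that this iteration preserves the $\Pi^1_2$-indescribability of $\kappa$. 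Thus we may assume without loss of generality that $\GCH$ holds and $\kappa$ is $\Pi^1_2$-indescribable.

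Next I would apply Theorem~\ref{theorem_1_square_and_reflection} to obtain a cofinality-preserving forcing extension $V[G]$ in which $\square_1(\kappa)$ holds, $\Refl_1(\kappa)$ holds, and $\kappa$ is $\kappa^+$-weakly compact; in particular $\kappa$ is weakly compact, equivalently $\Pi^1_1$-indescribable, in $V[G]$. Now, working in $V[G]$, I would invoke Theorem~\ref{theorem_n_square_refutes_simultaneous_refl} with $n=1$: since $\kappa$ is $\Pi^1_1$-indescribable and $\square_1(\kappa)$ holds, there are $\Pi^1_1$-indescribable subsets $S_0,S_1\subseteq\kappa$ for which there is no $\beta<\kappa$ with both $S_0\cap\beta$ and $S_1\cap\beta$ being $\Pi^1_1$-indescribable. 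Since a set is $\Pi^1_1$-indescribable exactly when it is weakly compact, the pair $S_0,S_1$ witnesses the failure of $\Refl_1(\kappa,2)$. Hence $V[G]$ is a forcing extension of $V$ in which $\Refl_1(\kappa)$ holds and $\lnot\Refl_1(\kappa,2)$ holds, as required.

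There is essentially no obstacle here beyond correctly citing the two theorems: the content of the corollary is entirely contained in them, and the $\GCH$-forcing step, while not literally spelled out earlier, is routine and is subsumed by the preservation results already established (Theorem~\ref{theorem_easton_iterations_preserve_correctness} and Corollary~\ref{corollary_closed_forcing_preserves_correctness}). If one prefers to read the corollary purely as a relative consistency statement, one may instead simply assume $V=L$ — using that $\Pi^1_2$-indescribability is downward absolute to $L$ — so that the $\GCH$ comes for free and only the two cited theorems are needed.
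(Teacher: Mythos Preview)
Your proposal is correct and follows exactly the paper's approach: the corollary is stated as a direct consequence of Theorem~\ref{theorem_1_square_and_reflection} and Theorem~\ref{theorem_n_square_refutes_simultaneous_refl}, and you combine them in precisely this way. Your treatment of the missing $\GCH$ hypothesis is in fact more careful than the paper's, which leaves that gap implicit.
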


\section{Questions}\label{section_questions}
The theorems proved in this article about the principle $\square_1(\kappa)$ do not easily generalize to $\square_n(\kappa)$ because several key technical results about $\Pi^1_1$-indescribability which we used crucially in the proofs no longer hold for higher orders of indescribability. For example, given an embedding $j:M\to N$, where $N$ is $\Pi^1_n$-correct, we cannot necessarily use a generic $G$ for a poset $\P\in N$ from the ground model to lift  $j$ because $N[G]$ may no longer be $\Pi^1_n$-correct. An illustration of this is given in Remark~\ref{rem_groundmodelGenericBreaksCorrectness}. Also, while $\kappa$-strategically closed forcing cannot make a subset of $\kappa$ $\Pi^1_1$-indescribable if it was not so already in the ground model by Proposition~\ref{proposition_nonresurection}, a set can become $\Pi^1_2$-indescribable after $\kappa$-strategically closed forcing by Remark~\ref{remark_nonresurectionfails}.

\begin{question}
For $n>1$, can we force from a strong enough large cardinal that $\kappa$ is $\Pi^1_n$-indescribable and $\square_n(\kappa)$ holds nontrivially?
\end{question}
\begin{question}
Relative to large cardinals, for $n>1$, is it consistent that $\Refl_n(\kappa)$ and $\square_n(\kappa)$ both hold?
\end{question}

\begin{question}
Relative to large cardinals, is it consistent that $\Refl_1(\kappa,2)$ + $\lnot\Refl_1(\kappa,3)$?
\end{question}

\begin{question}
Can we force any indestructibility of $\Refl_1(\kappa)$?
\end{question}

\begin{question}
For $1\leq n<\omega$, if $\kappa$ is $\Pi^1_n$-indescribable, does our principle $\square_n(\kappa)$ imply the Brickhill-Welch principle $\square^n(\kappa)$? See Remark \ref{remark_relationship} for a discussion of $\square^n(\kappa)$.
\end{question}


\end{document}